\def\mytitle{Line Patterns in Free Groups}
\def\mykeywords{free group, quasi-isometry, rigidity, line pattern, Whitehead graph, Whitehead's Algorithm}
\title[\mytitle]{\mytitle}
\keywords{\mykeywords}
\subjclass[2010]{20F65, 20E05} 
\author{Christopher H. Cashen} 
\address{Christopher H. Cashen\newline \indent
Department of Mathematics\newline \indent
University of Utah\newline \indent 
Salt Lake City, UT 84112\\USA} 
\email{\href{mailto:cashen@math.utah.edu}{cashen@math.utah.edu}}
\urladdr{\href{http://www.math.utah.edu/~cashen}{http://www.math.utah.edu/~cashen}}
\author{Nata\v{s}a Macura} 
\address{Nata\v{s}a Macura\newline \indent
Department of Mathematics\newline \indent
Trinity University\newline \indent 
San Antonio, TX 78212\\USA} 
\email{\href{mailto:nmacura@trinity.edu}{nmacura@trinity.edu}}
\theoremstyle{plain}
\newtheorem{theorem}{Theorem}[section]
\newtheorem{lemma}{Lemma}[section]
\newtheorem{proposition}{Proposition}[section]
\newtheorem{corollary}{Corollary}[section]
\newtheorem*{maintheorem}{ Main Theorem}
\newtheorem*{reftheorem}{Theorem}
\newtheorem{question}{Question}
\theoremstyle{remark}
\newtheorem*{note}{Note}
\newtheorem*{remark}{Remark}
\theoremstyle{definition}
\newtheorem{definition}{Definition}[section]
\def\makeautorefname#1#2{\expandafter\def\csname#1autorefname\endcsname{#2}}
\let\fullref\autoref
\let\c@lemma=\c@theorem 
\let\c@proposition=\c@theorem 
\let\c@corollary=\c@theorem 
\let\c@definition=\c@theorem 
\let\c@example=\c@theorem 
\DeclareMathOperator{\shadow}{shadow}
\DeclareMathOperator{\Hy}{\mathbb{H}} 
\DeclareMathOperator{\QIgp}{\mathcal{QI}} 
\DeclareMathOperator{\Isomgp}{Isom} 
\DeclareMathOperator{\Sym}{Sym} 
\DeclareMathOperator{\Aut}{Aut} 
\DeclareMathOperator{\Wh}{Wh}
\def\bdry{\partial}
\def\haus{d_H}
\def\ray{\mathcal{R}} 
\def\X{\mathcal{X}} 
\def\Y{\mathcal{Y}}
\def\core{\mathcal{C}}
\def\pcore{p\core} 
\def\ppcore{pp\core} 
\def\hull{\mathcal{H}} 
\def\tree{\mathcal{T}} 
\def\lines{\mathcal{L}} 
\def\decomp{\textbf{D}} 
\def\e{\epsilon}
\newcommand{\closure}[1]{\overline{#1}} 
\newlength{\figstandardheight}
\def\from{\colon\thinspace}
\begin{document}
\begin{abstract}
We study line patterns in a free group by considering the topology of
the decomposition space, a quotient of the boundary at infinity of the
free group related to the line pattern.
We show that the group of quasi-isometries preserving a line pattern
in a free group acts by isometries on a related space if and only if there
are no cut pairs in the decomposition space.
\end{abstract}

\maketitle

\section{Introduction}
Given a finitely generated free group $F$ of rank greater than one and a word $w\in F$, the $w$-line at $g\in F$ is the set of
elements $\{gw^m\}_{m\in \mathbb{Z}}$.
Up to translation and coarse equivalence, 
we may assume that $w$ is cyclically reduced and not a power of
another element.
A Cayley graph with respect to a free basis of $F$ is a geometric
model for $F$ that is a tree, and in this case there is a unique geodesic in
the tree that contains the vertices $\{gw^m\}_{m\in \mathbb{Z}}$.

The $w$-line at $g$ is the same as the $w$ line at $h$ if
and only if $\bar{h}g$ is a power of $w$; the $w$-lines are the cosets
of $\left<w\right>$ in $F$.

The \emph{line pattern generated by $w$} is the collection of
distinct $w$-lines. Similarly, if we take finitely many words $w$, as
above, the line pattern generated by the collection is the union of
the patterns generated by the individual words. We will denote the line pattern $\lines$ when we do not wish to specify generators.

The main question is:
\begin{question}\label{question:qiequivalence}
  Let $F$ and $F'$ be finite rank free groups, possibly of different
  ranks.
Consider collections of words
  $\{w_1,\,\dots , w_m\}\subset F$ and $\{w'_1,\,\dots , w'_n\}\subset
  F'$. Let $\lines$ be the line pattern in $F$ generated by $\{w_1,\,\dots , w_m\}$, and let $\mathcal{L'}$ be defined similarly for $F'$. 

Is there a quasi-isometry $\phi\from F \to F'$ that preserves
  the patterns, in the sense that there is some constant $C$ so that for every line 
$l\in\lines$ there is an $l'\in\mathcal{L'}$ such that the Hausdorff distance between $\phi(l)$ and $l'$ is at most $C$, and vice versa?
\end{question}

A closely related question is:
\begin{question}\label{question:qigp}
  Let $F$ be a free group and $\lines$ a line pattern in $F$. 
What is the group $\QIgp(F,\lines)$ of quasi-isometries of $F$ that preserve the line pattern $\lines$?
\end{question}

In a pair of 1936  papers \cite{Whi36a, Whi36}, J. H. C. Whitehead
gave an algorithm to answer the following question:

Given two finite (ordered) lists of words $(w_1,\dots, w_k)$ and $(w_1',\dots
w_k')$ in a finite rank free group $F$, is there an automorphism
$\phi$ of $F$ such that for all $i$, $\phi(w_i)=w_i'$?

Questions~\ref{question:qiequivalence} and \ref{question:qigp} may
be viewed as geometric versions of Whitehead's question.

To motivate the statement of our results, it is instructive to consider line patterns
in a different setting. 

Line patterns in $\Hy^n$ for $n\geq 3$ have been studied by Schwartz
\cite{Sch97}. 
His terminology is ``symmetric pattern of geodesics''.
Let $M$ be a compact hyperbolic orbifold of dimension $n\geq 3$.
Pick any collection of closed geodesics in $M$. 
The lifts of these geodesics to the universal cover $\Hy^n$ are a
line pattern; call it $\lines$.

\begin{reftheorem}{\rm {\cite[Theorem~1.1]{Sch97}}}
  \[\QIgp(\Hy^n,\lines)\subset \Isomgp(\Hy^n)\]
\end{reftheorem}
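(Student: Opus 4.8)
The plan is to pass to the boundary sphere and show that a pattern-preserving quasi-isometry induces a \emph{conformal} boundary map, which for $n\geq 3$ must be Möbius. Any $(K,C)$-quasi-isometry $\phi$ of $\Hy^n$ extends to a quasiconformal homeomorphism $\hat\phi$ of the ideal boundary $S^{n-1}=\bdry\Hy^n$, and $\phi$ is determined up to bounded distance by $\hat\phi$. Each line $l\in\lines$ is a bi-infinite geodesic, hence carries a well-defined endpoint pair $\{l^-,l^+\}\subset S^{n-1}$, and two geodesics at finite Hausdorff distance share the same endpoints. The pattern-preservation hypothesis therefore says exactly that $\hat\phi$ permutes the set $\mathcal E\subset S^{n-1}\times S^{n-1}$ of endpoint pairs of $\lines$.

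Next I would record the structural features of $\mathcal E$ coming from the symmetric (lattice-invariant) hypothesis. Since $\lines$ is the full preimage of finitely many closed geodesics in the compact orbifold $M=\Hy^n/\Gamma$, it is invariant under the cocompact lattice $\Gamma\subset\Isomgp(\Hy^n)$, every point of $\Hy^n$ lies within a uniform distance of some line, and distinct lines are uniformly separated, diverging at a definite rate. Consequently $\mathcal E$ is $\Gamma$-invariant, dense, and self-similar at every scale: near any $\xi\in S^{n-1}$ the endpoints of $\lines$ accumulate in a configuration that, after renormalization, reproduces the global pattern.

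The main step is to show that $\hat\phi$ is conformal. Being quasiconformal, $\hat\phi$ is differentiable almost everywhere, with derivative $D\hat\phi_\xi$ a linear isomorphism of bounded linear distortion. At a point $\xi$ of differentiability I would analyze the infinitesimal picture of $\mathcal E$ at $\xi$: the endpoints of lines through and near $\xi$ determine a rich family of directions and cross-ratios that $\hat\phi$ must preserve, because it carries endpoint pairs to endpoint pairs. Using the self-similarity and the minimality of the $\Gamma$-action on the pattern, one shows this infinitesimal configuration is rigid enough that the only linear maps preserving it are the conformal ones; hence $D\hat\phi_\xi$ is a similarity and $\hat\phi$ is conformal. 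This is where essentially all the work lies, and where the separation and density of the pattern are indispensable: one must rule out any nontrivial distortion by exhibiting, at almost every point, enough independent geometric constraints arising from the pattern.

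Finally, since $n\geq 3$ the boundary sphere $S^{n-1}$ has dimension at least two, so Liouville's theorem applies: a conformal homeomorphism of $S^{n-1}$ is the restriction of a Möbius transformation, i.e. the boundary value of an isometry $g\in\Isomgp(\Hy^n)$. As $g$ and $\phi$ induce the same boundary map, they agree up to bounded distance and so represent the same class in $\QIgp(\Hy^n,\lines)$. Thus every element of $\QIgp(\Hy^n,\lines)$ is represented by an isometry, giving the claimed inclusion. The dimension hypothesis enters decisively at exactly two points — the a.e.-differentiability and conformality-forcing argument, and the application of Liouville — consistent with the failure of such rigidity when $n=2$.
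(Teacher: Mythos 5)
First, note what you are being compared against: the paper does not prove this statement at all --- it is quoted from Schwartz \cite{Sch97} purely as motivation for the free-group results. So the comparison must be with Schwartz's published argument. Your outline does track his strategy at the top level: extend a pattern-preserving quasi-isometry to a quasiconformal homeomorphism $\hat\phi$ of $S^{n-1}$, observe that $\hat\phi$ permutes the set $\mathcal{E}$ of endpoint pairs, show $\hat\phi$ is conformal, invoke Liouville (valid since $n-1\geq 2$) to get a M\"obius map, and use the fact that a quasi-isometry of $\Hy^n$ is determined up to bounded distance by its boundary map. Those framing steps are all correct.

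The gap is exactly in the middle step, and the one concrete mechanism you offer there is wrong as stated. Knowing that $\hat\phi$ permutes the countable set $\mathcal{E}$ imposes no cross-ratio constraints by itself: a homeomorphism can permute a countable dense collection of endpoint pairs while distorting cross-ratios arbitrarily. A sanity check that density plus $\Gamma$-invariance plus uniform separation cannot suffice on their own: all of these properties hold equally for symmetric patterns in $\Hy^2$, where the theorem is false --- the paper itself notes (via \fullref{theorem:allcircles} and \cite{BehNeu06}) that surface patterns admit many non-isometric pattern-preserving quasi-isometries. Schwartz's actual argument is a renormalization (``zooming'') one: at a point of differentiability of the quasiconformal map, one conjugates by similarities adapted to deep geodesics of the pattern near that point; the uniform divergence of distinct lines guarantees that the blown-up configurations converge to a limiting scale-invariant pattern which the derivative $D\hat\phi_\xi$ must carry to another such pattern, and a separate rigidity lemma shows that a nonsingular linear map between two such uniformly scattered, scale-invariant patterns is a similarity. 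This gives $1$-quasiconformality almost everywhere, and only then does conformality (Gehring/Liouville, needing dimension $\geq 2$) apply. Your sketch names the right raw ingredients --- separation, density, a.e.\ differentiability --- but replaces the renormalization step and the linear rigidity lemma, which is where the dimension restriction and the scattering hypothesis actually do their work, by the unjustified cross-ratio assertion. As written, the proposal is an accurate table of contents for Schwartz's proof with its central chapter missing.
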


This is an example of what we will call \emph{pattern rigidity}. 
The hyperbolic orbifold case is special in that there is a canonical
geometric model, $\Hy^n$, for $\pi_1M$.
Forgetting this for a moment, let $Y$ be any geometric model for
$\pi_1M$. 
For example, $Y$ could be a Cayley graph of $\pi_1M$. 
We still get a line pattern $\lines$ in $Y$, but it is not
necessarily true that $\QIgp(Y,\lines)\subset\Isomgp(Y)$.
However, there is a quasi-isometry $\phi\from Y \to \Hy^n$. 
Each line in $\lines$ gets sent to a line in $\Hy^n$, so we get a
line pattern $\phi(\lines)$ in $\Hy^n$. 
We have:
\[\phi\QIgp(Y,\lines)\phi^{-1}=\QIgp(\Hy^n,\phi(\lines))\subset\Isomgp(\Hy^n)\]

In the free group situation we do not have a canonical space to take
the place of $\Hy^n$ that works for every line pattern.
For a given line pattern we will try to construct a space $X$
and a
quasi-isometry $\phi\from F\to X$ such that pattern preserving
quasi-isometries are conjugate into the isometry group of $X$:
\[\phi\QIgp(F,\lines)\phi^{-1}=\QIgp(X,\phi(\lines))\subset\Isomgp(X)\]

A priori this would only give a quasi-action of
$\QIgp(F,\lines)$ on $X$ by maps bounded distance from isometries.
We actually prove something stronger. 
We will say a line pattern
$\lines$ in $F$ is \emph{rigid} if there is a space $X$,
a
quasi-isometry $\phi\from F\to X$, and an isometric action of
$\QIgp(F,\lines)$ on $X$ that agrees with
conjugation by $\phi$, up to bounded distance.

It is easy to see that not all patterns are rigid. 
A necessary
condition is that the multiplicative  quasi-isometry constants of
$\QIgp(F,\lines)$ are bounded. 
Suppose $\lines$ is contained
in a proper free factor $F'$ of $F$, so that
$F=F'*F''$. 
Then $\QIgp(F'')\subset\QIgp(F,\lines)$ contains
a sequence of quasi-isometries with unbounded constants, so the
pattern is not rigid.

\hypertarget{introexample}{
Another example where the lack of rigidity is apparent for algebraic
reasons is the pattern generated by the word $ab\bar{a}\bar{b}$ in
$F_2=\left<a,b\right>$.
The automorphism group of $F_2$ preserves this line pattern, so again
we have a sequence of pattern preserving quasi-isometries with unbounded constants.}

However, algebraic considerations do not fully determine which
patterns are rigid. 
Consider the pattern in $F_2$ generated by $ab$
and $a\bar{b}$. 
There is only a finite group of outer automorphisms of
$F_2$ that preserve this pattern, so all pattern preserving
automorphisms are isometries, up to bounded distance.
We might guess the pattern is rigid, but in fact it
 is quasi-isometrically equivalent to the $ab\bar{a}\bar{b}$
pattern, see \fullref{theorem:allcircles}. 

Our main result shows that sufficiently
complicated patterns are rigid. 
To make this precise, we use a
topological space that is a quotient of the boundary at infinity of a
tree for $F$. 
This space is called the \emph{decomposition space} associated to
the line pattern. 

\begin{maintheorem}\hypertarget{main}{
Let $\lines$ be a line pattern in a finitely generated,
non-abelian free group, $F$. The following are equivalent:
\begin{enumerate}
\item The line pattern is rigid.
\item The decomposition space has no cut pairs.
\end{enumerate}}
\end{maintheorem}

\begin{remark}
  We use the phrase ``has no cut pairs'' inclusively to mean that the space is
  connected, has no cut points and no cut pairs.
\end{remark}

In \fullref{sec:flexibility} we show that when the decomposition space
has cut pairs there is a sequence of
pattern preserving quasi-isometries with unbounded quasi-isometry
constants, so the pattern is not rigid. 
We also show in this case that $F$ is not finite index in $\QIgp(F,\lines)$.

In the examples above, the pattern that is contained in a proper free factor would have a
disconnected decomposition space. 
For the other two, the decomposition
space is a circle.

Determining if the decomposition
space is connected is essentially Whitehead's Algorithm, which is
discussed in \fullref{sec:WA}. 
The idea is to build a graph, the
Whitehead graph, associated to the line pattern. 
Connectivity of this
graph is related to connectivity of the decomposition space, see
\fullref{theorem:Bestvina}.

In \fullref{sec:Whandtopology} we use generalizations of the Whitehead
graph to identify finite cut sets in the decomposition space. 
In
particular, \fullref{theorem:findcutpairs} allows us to tell if there are
cut pairs in the decomposition space.

The proof of the rigidity part of the theorem in
\fullref{sec:rigidity} is similar in philosophy to the various
geometric proofs of Stallings' Theorem, see Dunwoody \cite{Dun85},
Gromov \cite{Gro87}, Niblo \cite{Nib04} or Kapovich \cite{Kap07a}.
The idea in these proofs is to use minimal surfaces, or a combinatorial
approximation thereof, to cut up a space into pieces.
One then uses properties of the particular choice of surfaces to show
that they are, or can be chosen to be, suitably independent, so that
the complex dual to the cutting surfaces is a tree.

We do something similar with small cut sets in the decomposition space.
A novel feature of our approach is that the argument takes place ``at
infinity''.
The cut sets we use have more
complicated interactions than those in Stallings' Theorem, and in
general the space dual to the collection of cut sets will not be a tree, it will be a cube complex
quasi-isometric to a tree.

Working at infinity has the benefit that the cube complex we
construct is canonical and inherits a
canonical line pattern. 
$\QIgp(F,\lines)$ is conjugate to the
group of isometries of the cube complex that preserve the line
pattern, see \fullref{theorem:rigidity}.

This allows us to answer Questions 1 and 2 in the rigid case:
Two line patterns in free groups are equivalent if and only if there
is a pattern preserving isometry between the associated cube
complexes. 
The free group $F$ acts cocompactly by pattern
preserving isometries on the cube complex, so $\QIgp(F,\lines)$
does as well. 
This allows us to give a
description of $\QIgp(F,\lines)$ as a complex of
groups. However, the vertex stabilizers will not, in general, be
finitely generated groups.

Consideration of line pattern preserving quasi-isometries arises
naturally in Geometric Group Theory. 
Work of Papasoglu \cite{Pap05} shows that group splittings of finitely
presented groups over virtually cyclic subgroups are preserved by quasi-isometries. 
If a finitely presented, one-ended group has a non-trivial JSJ--decomposition
over virtually cyclic subgroups, then each vertex group of the decomposition has a line
pattern coming from the incident edge groups. 
The equivalence classes of these line patterns give quasi-isometry
invariants for the group, and, in the rigid case, impose severe
restrictions on quasi-isometries of the group.

In particular, the authors came upon this problem in the course of
studying mapping tori of free group automorphisms. In the case of
a linearly growing automorphism, the mapping torus has a
JSJ-decomposition with vertex groups $F\times\mathbb{Z}$.
Understanding the line patterns in the free factors of the vertex
groups is a key step in the
quasi-isometry classification of these mapping tori \cite{CasMac09b}.

\section{Preliminaries}

\subsection{Cut Sets and Cubings}\label{sec:cubings}
If $X$ is a topological space, a \emph{cut set} is a subset $S\subset
X$ such that $X\setminus S=\{x\in X\mid x\notin S\}$ is disconnected. 
A single point that is a cut set is a \emph{cut point}; a pair of points that is a cut set is a \emph{cut pair}, etc. 

A cut set $S$ is \emph{minimal} if no proper subset of $S$ is a cut set of $X$.

If $S$ and $S'$ are cut sets of $X$ we say $S'$ \emph{crosses} $S$ if $S'\setminus S$ has points in multiple components of $X\setminus S$. 
This is not a symmetric relation, but it is if we assume that $S$ and
$S'$ are minimal.

A \emph{cubing} is a simply connected, non-positively curved cube
complex. Cubings can be used to encode the combinatorics of a
collection of cut sets.
Our treatment of cubings is based on work of Sageev
\cite{Sag95}.

Let $\{S_i\}_{i\in I}$ be a collection of closed, minimal cut sets of $X$ so that for
each $i$, $X\setminus S_i$ has exactly two connected components,
$A_i^0$ and $A_i^1$. 
We will take the superscripts mod 2, so that the two components of
$X\setminus S_i$ are $A_i^\e$ and $A_i^{1+\e}$ for $\e\in\{0,1\}$.
Let \[\Sigma=\{A^0_i\}_{i\in I}\cup\{A^1_i\}_{i\in I}\]


Define a cube complex as follows.
The vertices are the subsets $V$ of $\Sigma$ such that:
\begin{enumerate}
\item For all $i\in I$ exactly one of $A^0_i$ or $A^1_i$ is in $V$.
\item If $C\in V$ and $C'\in\Sigma$ with $C\subset C'$ then $C'\in V$. 
\end{enumerate}

Two vertices are connected by an edge if they differ by only one set in $\Sigma$. 

One can identify $\Sigma$ with $2^I$. The $i$-th
``coordinate'' is either 0 for
$A^0_i$ or 1 for $A^1_i$. Edges join vertices that differ in exactly one coordinate.

The vertices are the elements of $2^I$ that are ``consistent'' with
the cut set structure in the sense that if for some $i$ and $j$ we
have $A^1_i\subset A^1_j$  then we do not have any vertices that are
``1'' in the $i$-th coordinate and ``0'' in the $j$-th coordinate.
 It is not consistent to
be simultaneously in $A^1_i$ and $A^0_j$.

Informally, having $\e$ in the $i$-th coordinate corresponds to being
in $A_i^\e$. There is a subtlety here, though. An element of $2^I$ might be consistent
without being realized as a component of $X\setminus \{S_i\}$.
It is possible that there are vertices such that $\e_i$ is the value
of the $i$-th coordinate of the vertex, but $\cap_{i\in I} A_i^{\e_i}=\emptyset$.

\begin{remark}
  There is a minor difference from Sageev's contruction. In his
  notation we would be considering $A_i=S_i\cup A_i^0$ and
  $A_i^c=A_i^1$. The nature of the cut sets we are interested in would make it problematic
  to include them in one of the components. There is only one
  place where this requires us to change Sageev's arguments, which we
  will point out shortly. Everywhere else, it is sufficient to replace
  a statement like:
\[A_i\subset A_j\implies A_j^c\subset A_i^c\]
with a statement like:
\[A_i^{\e_i}\subset A_j^{\e_j}\implies A_j^{1+\e_j}\subset
A_i^{1+\e_i}\]
This statement follows easily from the fact that minimal cut sets are
either mutually crossing or mutually non-crossing.
\end{remark}

Edges in the complex correspond to changing one coordinate from 0 to
1, or vice versa. However, to maintain consistency not every
coordinate can be changed:

\begin{lemma}[{\cite[Lemma 3.2]{Sag95}}]
  If $V$ is a vertex and $A_i^\e\in V$ then
  \[W=(V\setminus\{A_i^\e\})\cup\{A_i^{1+\e}\}\] is a vertex if and only
  if $A_i^\e$ is minimal in $V$, in the sense that $A_i^\e$ does not
  contain any other $A_j^\delta\in V$.
\end{lemma}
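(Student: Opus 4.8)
The plan is to reduce everything to condition~(2) in the definition of a vertex. Condition~(1) holds for $W$ for free: $W$ and $V$ agree at every index $j\neq i$, and at index $i$ we have simply exchanged $A_i^\e$ for $A_i^{1+\e}$, so $W$ still contains exactly one of $A_i^0,A_i^1$. Hence the lemma amounts to showing that $W$ is upward-closed under inclusion if and only if $A_i^\e$ is minimal in $V$. The only tool I would need beyond the definitions is the relation recorded in the preceding remark, $A_i^{\e_i}\subset A_j^{\e_j}\implies A_j^{1+\e_j}\subset A_i^{1+\e_i}$, together with the observation that $A_i^\e$ and $A_i^{1+\e}$ are disjoint and nonempty, so neither is contained in the other.

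For the forward direction I would prove the contrapositive. If $A_i^\e$ is not minimal in $V$, pick $A_j^\delta\in V$ with $A_j^\delta\subset A_i^\e$ and $A_j^\delta\neq A_i^\e$. Since $A_i^{1+\e}\notin V$, the index $j$ cannot equal $i$, so $A_j^\delta$ persists into $W$ while $A_j^{1+\delta}$, being absent from $V$, is absent from $W$. Feeding $A_j^\delta\subset A_i^\e$ into the displayed relation gives $A_i^{1+\e}\subset A_j^{1+\delta}$. Now $A_i^{1+\e}\in W$, its superset $A_j^{1+\delta}\in\Sigma$ is not in $W$, and condition~(2) fails; thus $W$ is not a vertex.

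For the reverse direction I would assume $A_i^\e$ minimal in $V$ and check condition~(2) directly: given $C\in W$ and $C'\in\Sigma$ with $C\subset C'$, show $C'\in W$. If $C\neq A_i^{1+\e}$ then $C\in V$, so $C'\in V$ by condition~(2) for $V$; the only way $C'$ could fail to lie in $W$ is $C'=A_i^\e$, but then $C$ would be a set of $V$ strictly inside $A_i^\e$ (strictly, since $A_i^\e\notin W$ forces $C\neq A_i^\e$), contradicting minimality. If instead $C=A_i^{1+\e}$ and $C'\neq C$, then disjointness rules out $C'=A_i^\e$, so $C'=A_j^\gamma$ with $j\neq i$; the relation turns $A_i^{1+\e}\subset A_j^\gamma$ into $A_j^{1+\gamma}\subset A_i^\e$, so were $A_j^{1+\gamma}$ the representative chosen in $V$ it would again contradict minimality, forcing $A_j^\gamma\in V$ and hence $C'\in W$.

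The argument is more bookkeeping than computation, and the single point I would be careful about is that both implications run through the same mechanism: the displayed relation converts an inclusion among the $A_\bullet$'s into an inclusion of the opposite sets, and the minimality of $A_i^\e$ is precisely the hypothesis that rules out the only configuration---a $V$-set nested strictly inside $A_i^\e$---that would be made inconsistent by flipping the $i$-th coordinate. Recognizing that this nesting is the sole obstruction, rather than enumerating cases blindly, is the conceptual crux.
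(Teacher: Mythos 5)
Your proof is correct and matches the intended argument: the paper does not reprove this lemma but cites Sageev \cite[Lemma 3.2]{Sag95}, noting in the preceding remark that the only needed change is replacing the complement-based implication with $A_i^{\e_i}\subset A_j^{\e_j}\implies A_j^{1+\e_j}\subset A_i^{1+\e_i}$, which is exactly the tool you use. Your case analysis (condition (1) automatic; condition (2) failing precisely when some $V$-set nests strictly inside $A_i^\e$, via the inclusion-reversal relation in both directions) is Sageev's argument correctly adapted to this setting.
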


It turns out in general that there are still too many vertices. The
graph that has been constructed so far is not necessarily
connected. This is where our construction differs from Sageev's. 
For both his construction and ours, the idea is to select a
subcollection of the vertices, show that the subcollection belongs to
a path connected subset of the graph, and then throw away everything
not in that path component. Our construction will come later in
\fullref{sec:rigidity}. However, this is the only place in which
Sageev uses the special properties of his chosen collection $\Sigma$.
The rest of his arguments go through unchanged in our
setting. 

So assume that we have passed to a non-trivial path connected component of the original graph.
Following Sageev again, one glues in one square (2 dimensional cube)
whenever one sees the boundary of a square in the graph. One proceeds
by induction to glue in an $n$--cube whenever one sees the boundary of
an $n$--cube in the $(n-1)$--skeleton of the complex. The result is a
(possibly infinite dimensional) simply connected, non-positively
curved cube complex, a cubing \cite[Theorem 3.7]{Sag95}.

There is an equivalence relation on the (directed) edges of a
cubing. Two directed edges $e$ and $e'$ are equivalent if there is a
finite sequence $e=e_0, e_1, \dots, e_k=e'$ such that for each $i$,
$e_i$ and $e_{i+1}$ are opposite edges of some 2--cube, oriented in the
same direction.

Equivalence classes of edges are called \emph{combinatorial
  hyperplanes}. There is a corresponding idea of a \emph{geometric
  hyperplane}.
Consider an $n$--cube of the complex. It can be identified with a cube
of side length 1 in $\mathbb{R}^n$ where the vertices have all
coordinates in $\{\pm\frac{1}{2}\}$. Consider the edges that correspond
to changing the $n$-th coordinate from $-\frac{1}{2}$ to
$\frac{1}{2}$. These edges belong to a combinatorial hyperplane. The
corresponding portion of a geometric hyperplane is the intersection of the $n$--cube
with the coordinate hyperplane $\{(x_1,\dots, x_n)\in\mathbb{R}^n\mid
x_n=0\}$. Such pieces are then glued together for each cube with edges
in the combinatorial hyperplane.

\begin{theorem}{\rm {\cite[Theorem 4.10]{Sag95}}}
  Suppose $J$ is a geometric hyperplane in a cubing $Y$. Then $J$ does
  not intersect itself and partitions $Y$ into two connected components.
\end{theorem}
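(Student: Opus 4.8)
The plan is to combine a purely local analysis inside a single cube, where the non-positive curvature condition enters, with a global \emph{parity} argument that exploits simple connectivity of $Y$. Throughout, call an edge of $Y$ \emph{dual} to $J$ if it (or its reverse) lies in the combinatorial hyperplane defining $J$; equivalently, $J$ meets that edge in its midpoint, and the midcubes composing $J$ are exactly the coordinate midcubes transverse to dual edges.

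First I would record the local picture. Inside an $n$-cube $C\cong[-\tfrac12,\tfrac12]^n$ the edges fall into $n$ parallelism classes, one per coordinate direction, and each such class is a local combinatorial hyperplane. Thus $J\cap C$ is a union of coordinate midcubes $\{x_k=0\}$, and the flag (non-positive curvature) condition is what guarantees it is a \emph{single} embedded midcube: no two distinct edge directions of a cube are dual to the same global hyperplane. Such a midcube cuts $C$ into two half-cubes $\{x_k>0\}$ and $\{x_k<0\}$. This is the local model for both assertions, and it reduces self-intersection of $J$ to a gluing question across cubes.

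Next I would build the parity function. The boundary of any $2$-cube crosses a fixed combinatorial hyperplane an even number of times, since a square is dual to either $0$ or $2$ of its edges, namely a parallel pair. Because $Y$ is simply connected, every edge loop is a product of conjugates of square boundaries, so the number of dual edges traversed by an edge path is invariant mod $2$; fixing a basepoint $v_0$ this yields $f\colon Y^{(0)}\to\mathbb{Z}/2$, with dual edges flipping $f$ and all other edges preserving it. Orienting each dual edge from its $f=0$ endpoint to its $f=1$ endpoint gives a coherent orientation --- consistent across each square, since opposite edges receive the same jump --- so $J$ is two-sided; together with the local embeddedness this forbids any return of $J$ to itself, giving the no-self-intersection claim. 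The coherent orientation also assigns every half-cube, and every cube disjoint from $J$, to one of two sets $Y^+$ and $Y^-$; these are disjoint, cover $Y\setminus J$, are each open in $Y\setminus J$, and are both nonempty (the two endpoints of any dual edge lie in different ones). Hence $Y\setminus J=Y^+\sqcup Y^-$, and the theorem reduces to showing each halfspace is connected.

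I expect the connectivity of $Y^{\pm}$ to be the main obstacle, and the place where non-positive curvature does its real work. To join two vertices $u,w$ with $f(u)=f(w)$ by a path inside $Y\setminus J$, one takes an edge path between them and must cancel its dual-edge crossings in pairs, rerouting through the \emph{carrier} of $J$ --- the union of cubes $J$ meets. The flag-link condition is what forces the carrier to be connected and forces consecutive same-side half-cubes to be glued along shared midcube faces, so that the detours actually exist and $Y^{\pm}$ is connected. Establishing this carrier structure cleanly, rather than the parity bookkeeping of the previous step, is the heart of the argument; once it is in hand, $Y\setminus J$ has exactly the two components $Y^+$ and $Y^-$.
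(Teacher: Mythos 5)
The paper does not prove this statement at all --- it quotes it from Sageev \cite{Sag95} --- so the benchmark is Sageev's argument, and measured against it your proposal has two genuine gaps. The first is a circularity in your ``local'' step: the claim that the flag condition guarantees no two distinct edge directions of a cube are dual to the same global hyperplane is not a local consequence of non-positive curvature; it \emph{is} the non-self-intersection assertion you are trying to prove. Flagness of links is purely local, and there exist non-positively curved but non-simply-connected square complexes in which a hyperplane is dual to both edge directions of a single square; ruling this out in a cubing genuinely requires the global hypothesis of simple connectivity, via something like Sageev's analysis of the carrier $N(J)\cong J\times[0,1]$ (or, in later treatments, CAT(0) convexity of hyperplanes or a minimal disk diagram argument). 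Your parity function cannot supply the missing piece: if a square had all four of its edges dual to $J$, a loop around its boundary would still cross the edge class an even number of times (four), the corner parities would alternate $0,1,0,1$ consistently, and every dual edge would still join an $f=0$ vertex to an $f=1$ vertex, so $f$ is well-defined and entirely blind to the self-intersection. Even the orientation coherence you invoke (``opposite edges receive the same jump'') holds only when the other parallel pair of the square is non-dual --- that is, it presupposes the embeddedness you are claiming to derive.

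The second gap you name yourself: the connectivity of each halfspace $Y^{\pm}$, which needs the carrier structure and the rerouting of paths through it, is deferred as ``the heart of the argument.'' That assessment is accurate, and it is precisely where Sageev does the work --- he establishes the product structure of the carrier first and deduces both embeddedness and two-sided separation from it, roughly the reverse of your intended order. What your sketch actually establishes is only the easy half: \emph{granting} that $J$ meets each cube in at most one midcube, simple connectivity gives the well-defined parity $f$ and hence that $Y\setminus J$ has at least two pieces. As written, the two hard claims of the theorem --- no self-intersection, and \emph{exactly} two components --- are respectively assumed and postponed, so the proposal is not yet a proof.
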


We take the metric on the cubing to be the path metric on the
1-skeleton. The distance between two vertices is the minimal number of
edges in an edge path joining them, and such a minimal edge path is
called a \emph{geodesic}.

A corollary of the preceding theorem is the following observation
about geodesics:
Let $x$ and $y$ be vertices in a cubing $Y$. If they are distance $D$
apart, then a geodesic joining them must cross $D$ geometric
hyperplanes, one through the midpoint of each edge of the path. Each
of these hyperplanes disconnects $Y$, with $x$ and $y$ in opposite
components. Therefore, any geodesic from $x$ to $y$ must cross the
same $D$ hyperplanes. Conversely, the distance between $x$ and $y$ in
$Y$ is the number of hyperplanes separating them.

Fix a hyperplane. There is an $A_i^\e\in\Sigma$ such
that every directed edge $e$ in the hyperplane joins a vertex $V_e$ with a
vertex $(V_e\setminus\{A_i^\e\})\cup \{A_i^{1+\e}\}$. Furthermore,
every edge of this form belongs to the hyperplane
\cite[Lemma 3.9]{Sag95}.

Thus, we have a bijection between the set of geometric hyperplanes
and the collection $\{S_i\}$ of cut sets. This is how the cubing
encodes the collection of cut sets. Cut sets of $X$ correspond to
hyperplanes of $Y$. Distance in $Y$ corresponds to being separated by a
given number of cut sets. An $n$--cube in $Y$ corresponds to a
collection of $n$ distinct, pairwise crossing cut sets $S_i$ in $X$.

\subsection{Graphs and Complexes of Groups}
In this section we give a brief account of graphs and complexes of
groups. The reader is referred to Bridson and Haefliger's book
\cite{BriHae99}  for more detail.

A \emph{graph of groups} is a construction that builds a group by
amalgamating smaller groups. Start with a finite connected graph $\Gamma$, and
associate to each vertex or edge $\gamma$ a local  group $G_\gamma$, along with 
injections $\phi_{e,v}\from G_e\to G_v$ for each edge $e$ and vertex
$v$ that is an endpoint of $e$.

The \emph{fundamental group of the graph of groups} is then obtained by
taking as generators all the vertex groups as well as one generator
$g_e$ 
for each edge $e$ in the graph. The relations are:
\begin{enumerate}
\item all the relations from the vertex groups,
\item for each edge $e$ with endpoints $v$ and $v'$, and for each
  $h\in G_e$, \[g_e\phi_{e,v}(h)g_e^{-1}=\phi_{e,v'}(h),\]
\item $g_e=1$ for each edge $e$ in a chosen maximal subtree of $\Gamma$.
\end{enumerate}

The fundamental group does not depend on the choice of maximal
subtree.

Associated to a graph of groups there is a simplicial tree
$D\varGamma$ covering $\Gamma$ called the
Bass-Serre tree or the \emph{development} of the graph of groups. The
fundamental group of the graph of groups acts by isometries on
$D\varGamma$, with vertex stabilizers equal to conjugates of the
vertex groups in the graph of groups, and edge stabilizers equal to
conjugates of the edge groups.

Conversely, given a cocompact isometric action of a group $G$ on a
simplicial tree we get a graph of groups decomposition for $G$ by
taking the graph to be the quotient of the tree by the $G$ action and
choosing local groups to be vertex and edge stabilizers.

A \emph{complex of groups} is generalization of the graph of groups to
higher dimensional complexes. In particular, a group acting
cocompactly by isometries on a polyhedral complex can be given a
complex of groups structure by associating to each cell in the
quotient a group isomorphic to the stabilizer of the cell in the
original complex.

Unlike in the graph of groups case, not every complex of groups is
developable. That is, starting with a complex of groups $\Gamma$, there may not
exist a complex $X$ so that the fundamental group of the complex of
groups acts on $X$ with quotient $\Gamma$. However, if you start with
a group acting on a polyhedral complex, then the resulting graph of
groups is developable, the development is just the polyhedral complex
that you started with.

A developable complex of groups is \emph{faithful} if no non-trivial
element of the fundamental group of the complex of groups acts
trivially on the development.

To insure that the quotient is still a polyhedral complex, one should
assume that if an element of the group leaves a cell invariant, then
it fixes it pointwise. This is called an action \emph{without
 inversions}. If this is not the case, it can be achieved by
subdividing cells.

Lim and Thomas have worked out a covering theory for complexes of
groups \cite{LimTho08}. A particular result that will be of interest
to us is:

\begin{reftheorem}{\rm{\cite[Theorem 4]{LimTho08}}}
Let $X$ be a simply connected polyhedral complex, and let $G$ be a
subgroup of $\Aut(X)$ (acting without inversions) that induces a
complex of groups $\Gamma$. Then there is a bijection between the set of
subgroups of $\Aut(X)$ (acing without inversions) that contain $G$,
an the set of isomorphism classes of coverings of faithful,
developable complexes of groups by $\Gamma$.
\end{reftheorem}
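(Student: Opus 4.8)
The plan is to set up a Galois-type correspondence exactly parallel to the classical correspondence between intermediate covering spaces and intermediate subgroups, with the development of a complex of groups playing the role of the universal cover. The key input I would use is the \emph{development theorem} for complexes of groups (as in Bridson--Haefliger): if a group $G$ acts without inversions on a connected, simply connected polyhedral complex $X$, then the induced complex of groups $\Gamma = G\backslash\!\backslash X$ is developable, its development is canonically isomorphic to $X$, its fundamental group is $G$, and it is faithful precisely because $G\leq\Aut(X)$ acts faithfully. This pins down $\Gamma$ as the ``basepoint'' of the correspondence and identifies $X$ as the common universal object through which everything is compared.

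First I would construct the forward map, from intermediate subgroups to coverings. Given $G\leq H\leq\Aut(X)$ with $H$ acting without inversions, form $\Gamma' = H\backslash\!\backslash X$. By the development theorem $\Gamma'$ is faithful and developable, with development $X$ and $\pi_1(\Gamma') = H$. The factorization of quotient maps $X\to G\backslash X\to H\backslash X$, together with the inclusion $G\hookrightarrow H$ of local data, defines a morphism of complexes of groups $\Gamma\to\Gamma'$, and I would verify it is a covering by checking Haefliger's local bijectivity condition on stars; this reduces, via the common development $X$, to the statement that on the star of each lift the $H$-orbit structure refines bijectively into the $G$-orbit structure. This is well defined into isomorphism classes, so it gives a map from intermediate subgroups to isomorphism classes of coverings of $\Gamma$ by faithful, developable complexes of groups.

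Conversely I would build the backward map from a covering $\Phi\from\Gamma\to\Gamma'$ with $\Gamma'$ faithful and developable. The crucial lemma is that a covering of complexes of groups induces an isomorphism of developments; applying it, the development of $\Gamma'$ is identified with the development of $\Gamma$, which the development theorem identifies with $X$. Then $H:=\pi_1(\Gamma')$ acts on this development, hence on $X$; faithfulness of $\Gamma'$ makes the action faithful, so $H$ embeds as a subgroup of $\Aut(X)$ acting without inversions, and the injection $G=\pi_1(\Gamma)\hookrightarrow\pi_1(\Gamma')=H$ induced by $\Phi$ realizes $G\leq H$ compatibly with the actions on $X$.

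Finally I would check that the two constructions are mutually inverse: starting from $H$, forming $\Gamma\to H\backslash\!\backslash X$ and then re-extracting its fundamental group recovers $H$ as a subgroup of $\Aut(X)$ on the nose, since the development is identified with $X$ itself; and starting from a covering, forming the induced subgroup and taking its quotient complex of groups reproduces the original covering up to isomorphism, by naturality of the development isomorphism. I expect the main obstacle to be precisely this lemma that a covering induces an isomorphism of developments, together with the bookkeeping needed to make it natural: one must track the local group homomorphisms and the twisting (connecting) elements of $\Phi$ carefully enough that the identification of developments is canonical and equivariant, so that the two round-trips yield the identity rather than merely an abstract isomorphism. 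The remaining delicate points are verifying that the ``without inversions'' hypothesis is preserved in both directions and that faithfulness corresponds exactly to the action factoring through an embedding into $\Aut(X)$.
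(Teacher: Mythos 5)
There is no proof in the paper to compare against: Cashen and Macura import this statement verbatim from Lim and Thomas \cite{LimTho08} and use it as a black box, so the only meaningful comparison is with the proof in the cited source. Measured against that, your outline follows essentially the correct route, and it is the same Galois-type correspondence Lim and Thomas establish: the forward map $H\mapsto\bigl(\Gamma\to H\backslash\!\backslash X\bigr)$ verified via Haefliger's local condition on stars, the backward map extracting $\pi_1(\Gamma')$ acting on the common universal cover, faithfulness of $\Gamma=G\backslash\!\backslash X$ being automatic because $G$ is literally a subgroup of $\Aut(X)$, and the round-trip checks resting on naturality of the identification of developments. You have also correctly located where the real work lies: the ``crucial lemma'' that a covering $\Gamma\to\Gamma'$ with $\Gamma'$ faithful and developable induces an equivariant isomorphism of universal covers is the technical heart of the Lim--Thomas paper, and your proposal assumes it rather than proves it, so as written this is an architecture with the hardest beam left uninstalled. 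Two smaller points deserve care if you were to complete it: ``development'' must consistently mean the development associated to the universal covering (a developable complex of groups has many developments, and only the universal one carries the $\pi_1$-action you need, with faithfulness of $\Gamma'$ defined relative to it), and the identification of that development with $X$ must be made canonically and equivariantly --- as you note --- since injectivity of the correspondence (distinct subgroups $H_1\neq H_2$ giving non-isomorphic coverings, and isomorphic coverings recovering the same subgroup of $\Aut(X)$ on the nose rather than up to conjugacy) fails without it. The ``without inversions'' bookkeeping you flag is genuinely needed but routine, since the $\pi_1$-action on a development fixes cells pointwise by construction.
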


If $G$ acts cocompactly on $X$ then so does any subgroup $H$ of $\Aut(X)$
containing $G$, and we get a covering of the compact quotient
complexes. If the complex of groups coming from the $H$ action has
finite local groups then we get finite covering, so $G$ is a finite index
subgroup of $H$.

\subsection{Coarse Geometry}

In this section and the next we establish the language and basic ideas of coarse geometry and trees. Again, see Bridson and Haefliger's book \cite{BriHae99}  for more detail.

Let $(X,d_X)$ and $(Y,d_Y)$ be metric spaces. Let $A$ and $B$ be subsets of $X$.

The (open) $r$--neighborhood of $A$ is the set $N_r(A)=\{x\in X\mid d_X(x,A)<r\}$.

The \emph{Hausdorff distance} between $A$ and $B$ is: \[\haus(A,B)=\inf \{r\mid A\subset \closure{N_r(B)} \text{ and } B\subset \closure{N_r(A)} \}\]

We will use the common convention that some object is $r$--\emph{[adjective]} if it has the property for the specified $r$, and is \emph{[adjective]} if there exists some $r$ such that the object is $r$--[adjective].

$A$ and $B$ are $r$--\emph{coarsely equivalent} if $\haus(A,B)\leq r$.

$A$ is $r$--\emph{coarsely dense} in $X$ if $A$ is $r$--coarsely equivalent to $X$.

A map $\phi\from X\to Y$ is a $(\lambda,\epsilon)$-\emph{quasi-isometric embedding} if there exist $\lambda \geq 1$ and $\epsilon\geq 0$ such that for all $x,x'\in X$:
\[\frac{1}{\lambda}d_X(x,x')-\epsilon\leq d_y(\phi(x),\phi(x'))\leq\lambda d_X(x,x')+\epsilon\]
If, in addition, the image of $\phi$ is $\epsilon$-coarsely dense in $Y$, then $\phi$ is a $(\lambda,\epsilon)$-\emph{quasi-isometry}.

Maps $\phi$ and $\psi$ from $X$ to $Y$ are $r$--\emph{coarsely equivalent}, or are equivalent \emph{up to $r$--bounded distance},  if for all $x \in X$, $d_Y(\phi(x),\psi(x))\leq r$.

$\QIgp(X,Y)$ is the set of quasi-isometries from $X$ to $Y$ modulo coarse equivalence.

Suppose $A$ is $r$--coarsely dense in $X$ and $\phi$ is a pseudo-map that assigns to each $a\in A$ a subset $\phi(a)$ in $Y$ of diameter at most $R$.
Suppose there are $\lambda\geq 1$ and $\epsilon\geq 0$ such that for all $a$ and $a'$ in $A$:
\[\frac{1}{\lambda} d_X(a,a')-\epsilon-R\leq \inf \{d_Y(y,y')\mid y\in\phi(a), y'\in\phi(a')\}\]
and
\[\sup\{d_Y(y,y')\mid y\in\phi(a), y'\in\phi(a')\}\leq \lambda d_x(a,a')+\epsilon+R\]

Then the pseudo-map $\phi$ determines a unique (up to coarse equivalence) extension to a $(\lambda,2\lambda r+\epsilon+R)$--quasi-isometric embedding $\Phi\from X\to Y$ such that for all $a\in A$, $\Phi(a)\in\phi(a)$. 
For each $x\in X$ choose a closest $a\in A$ and choose any $\Phi(x)\in \phi(a)$.

Suppose for some $x$ we define $\Phi'(x)$ by choosing a different closest $a'\in A$ and $\Phi'(x)\in\phi(a')$. Then
\begin{align*}
d_Y(\Phi(x),\Phi'(x))&\leq \sup\{d_Y(y,y')\mid y\in\phi(a), y'\in\phi(a')\}\\
&\leq \lambda d_X(a,a')+\epsilon+R\\
&\leq \lambda\cdot 2r+\epsilon+R,
\end{align*}
so $\Phi$ and $\Phi'$ are coarsely equivalent.

The fact that $\Phi$ is a quasi-isometric embedding follows easily.

If $\phi\from X\to Y$ is a $(\lambda,\epsilon)$ quasi-isometry, consider the inverse pseudo-map that takes a point in $\phi(X)$ to its preimage in $X$. 
This preimage has diameter at most $\epsilon$, and the image of $\phi$ is $\epsilon$-coarsely dense in $Y$.
We can therefore extend this pseudo-map to a $(\lambda, 2\epsilon(\lambda+\epsilon))$--quasi-isometry $\bar{\phi}\from Y\to X$. 
The compositions $\phi\circ \bar{\phi}$ and $\bar{\phi}\circ \phi$ are coarsely equivalent to the identity maps in $Y$ and $X$, respectively. 
We call $\bar{\phi}$ a \emph{coarse inverse} of $\phi$.

With this notion of inverse, the set $\QIgp(X)$ of quasi-isometries from $X$ to itself, modulo coarse equivalence, becomes a group, the quasi-isometry group of $X$.

Let $G$ be a finitely generated group and let $\mathcal{B}$ be a finite generating set. 
The \emph{word metric} on $G$ with respect to $\mathcal{B}$ is defined by setting $|g|$ to be the minimum length of a word equal to $g$ in $G$ written in terms of generators in $\mathcal{B}$ or their inverses.

The Cayley graph of $G$ with respect to $\mathcal{B}$ is the graph with one vertex for each element of $G$ and an edge $[g,g']$ connecting vertex $g$ to vertex $g'$ if $g'=gb$ for some $b\in\mathcal{B}$.
Make this a metric graph by assuming that each edge has length one. The distance between two vertices $g$ and $g'$ is the length of the shortest edge path joining them. Thus, the distance from $g$ to the identity vertex is the same as $|g|$ in the word metric.
$G$ acts on the Cayley graph by isometries via left multiplication.

While the Cayley graph depends on the choice of finite generating set, different choices yield quasi-isometric graphs. 
More generally, if $G$ acts properly and cocompactly by isometries on a length space $X$, then $X$ is quasi-isometric to $G$ with (any) word metric.
We call such a space $X$ a \emph{geometric model} of $G$.

\subsection{Free Groups and Trees}

Let $F$ be the free group of rank
$n$, with free generating set (free basis) $\mathcal{B}=\{a_1,\dots , a_n\}$. 
For $g\in F$, let $\bar{g}$ denote $g^{-1}$.

Let $\tree=\mathcal{C}_\mathcal{B}(F)$ be the Cayley graph of $F$ with
respect to $\mathcal{B}$. 
Since we have chosen a free generating set, $\tree$ is a tree, a graph with no loops.

The tree has a boundary at infinity $\bdry \tree$ that is a Cantor
set. 
Adding the boundary compactifies the tree; $\closure{\tree}=\tree\cup\bdry\tree$ is a compact topological space
whose topology agrees with the metric topology on $\tree$.
For any two points $t$ and $t'$ in $\closure{\tree}$ there is a unique
geodesic $[t,t']$ joining them.

Let $v$ and $w$ be vertices in $\tree$. Define:
 \[\shadow^v(w)=\{x\in\closure{\tree}\mid w\in[v,x]\}\]

Let $\shadow_\infty^v(w)=\shadow^v(w)\cap \bdry\tree$.

If $\xi\in\bdry\tree$ and $v\in\tree$ let $v=v_0,v_1,\dots$ be the vertices along $[v,\xi]$. The sets $\shadow^v(v_i)$ give a neighborhood basis for $\xi$.
 The topology on $\closure{\tree}$ is independent of the choice of $v$.

Since $\tree$ is hyperbolic, any
quasi-isometry $\phi\from \tree\to \tree'$ extends to a homeomorphism $\bdry
\phi\from \bdry \tree\to\bdry \tree'$.


\subsection{Line Patterns and the Decomposition Space}
Suppose $l=\{gw^m\}_{m\in \mathbb{Z}}$ is a line in the pattern. The
line $l$ has
distinct endpoints at infinity:
\[l^+=gw^\infty=\lim_{i\to \infty}gw^i\]
and
\[l^-=gw^{-\infty}=\lim_{i\to -\infty}gw^i\]

The lines in the pattern never have endpoints in common, so we can
decompose $\bdry \tree$ into disjoint subsets that are either the pair of
endpoints of a line from the pattern or a boundary point that is not
the endpoint of a line.

Define the decomposition space $\decomp_\lines$ (or just
$\decomp$ when $\lines$ is understood) associated to a line
pattern $\lines$ to be the space that has one point for each set
in the decomposition of $\bdry \tree$, with the quotient topology.

Let $q\from \bdry \tree \to \decomp$ be the quotient map. 
For $x\in\decomp$, $q^{-1}(x)$ is either a single point that is
not the endpoint of any line in $\lines$, or $q^{-1}(x)=\{l^+,l^-\}$
for some $l\in\lines$.
The former we call \emph{bad points}, the later, \emph{good points}.

The quotient map $q$ induces a bijection between $\lines$ and the
good points of $\decomp$, which we denote by $q_*$.

If $S\subset \decomp$ we will use the notation
$\hat{S}=q^{-1}(S)\subset \bdry\tree$. Further, if $S$ consists of good points
we will use $\tilde{S}$ to be the collection of lines of $\lines$
given by $q_*^{-1}(S)$.

The decomposition space  is a perfect, compact, Hausdorff topological space.

A quasi-isometry $\phi$ from $\tree$ to $\tree'$ extends to a homeomorphism
$\bdry\phi\from \bdry \tree\to\bdry \tree'$.
In particular, if there are line patterns $\lines$ in $\tree$ and
$\lines'\in \tree'$, and if $\phi$ is a pattern preserving quasi-isometry,
then the homeomorphism $\bdry\phi\from\bdry \tree\to\bdry \tree'$ descends to a
homeomorphism of the corresponding decomposition spaces.

\section{Whitehead's Algorithm}\label{sec:WA}
Since Whitehead's original  work \cite{Whi36a,Whi36}, a number of authors have refined Whitehead's Algorithm and applied it
to related algebraic questions. Section I.4 of the book of Lyndon and
Schupp \cite{LynSch77} gives a version of Whitehead's Algorithm and
some of the classical applications.

More recently, Stallings \cite{Sta99} and Stong \cite{Sto97} gave
3--manifold versions of Whitehead's Algorithm. In each of these papers
the aim was to show that a version of Whitehead's Algorithm could be
used to determine if, given a finite list of words $(w_1,\dots w_k)$
in $F$, there is a free splitting of $F$ such that every $w_i$ is
conjugate into one of the free factors. Stallings calls this
``algebraically separable''. This algebraic question is
then shown to be equivalent to a geometric question about whether or
not a collection of curves in a handlebody has a property that
Stallings calls ``geometrically separable'' and Stong calls ``disk-busting''.

In this section we
review Whitehead's Algorithm. Our language is similar to that of
Stallings and Stong, except that our group actions are on the left and
path concatenations are on the right, while they use the opposite convention.

\subsection{Whitehead Graphs}
Let $w\in F$ be a cyclically reduced word. Let
$\mathcal{B}=\{a_1,\dots, a_n\}$ be a free basis of $F$.
The \emph{Whitehead Graph of $w$ with respect to $\mathcal{B}$}, $\Wh_\mathcal{B}(*)\{w\}$, is the graph
with $2n$ vertices labeled $a_1,\dots, a_n, \bar{a}_1,\dots \bar{a}_n$, and an edge between vertices $v$ and $v'$ for \emph{each}
occurrence of $\bar{v}v'$ in $w$ (as a cyclic word).
The graph depends on the choice of $\mathcal{B}$, and, of course, on
$w$, but we will write $\Wh(*)$ when these are clear.

\begin{remark}
  At present the $(*)$ may be ignored; it will be explained in the next section.
\end{remark}

For example, if $F=\left<a,b\right>$ Figures~\ref{fig:Wh(a)}-\ref{fig:1Wh(a2ba2B2)} show some Whitehead Graphs.

\begin{figure}[ht]
\begin{minipage}[b]{0.45\textwidth}
\labellist \small
\pinlabel $b$ [bl] at 87 164
\pinlabel $\bar{b}$ [tl] at 87 3
\pinlabel $a$ [tl] at 165 83
\pinlabel $\bar{a}$ [tr] at 4 83
\endlabellist
  \centering
  \includegraphics[height=\figstandardheight]{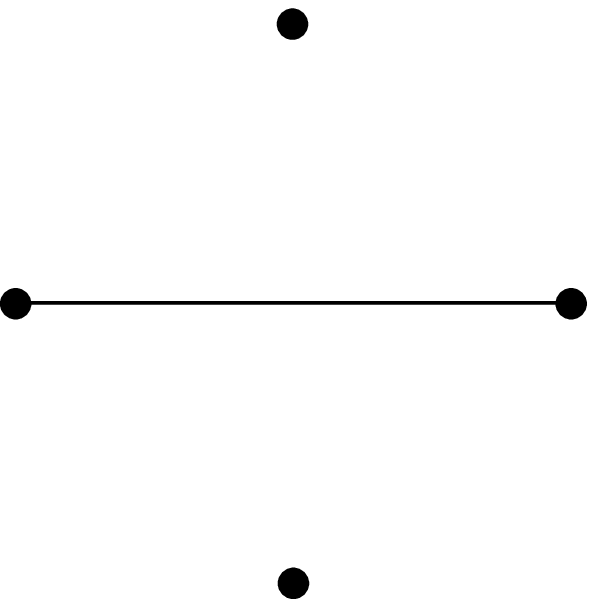}
  \caption{  $\Wh(*)\{a\}$}
\label{fig:Wh(a)}
\end{minipage}
\hfill
\begin{minipage}[b]{0.45\textwidth}
\labellist \small
\pinlabel $b$ [bl] at 87 164
\pinlabel $\bar{b}$ [tl] at 87 3
\pinlabel $a$ [tl] at 165 83
\pinlabel $\bar{a}$ [tr] at 4 83
\endlabellist
  \centering
  \includegraphics[height=\figstandardheight]{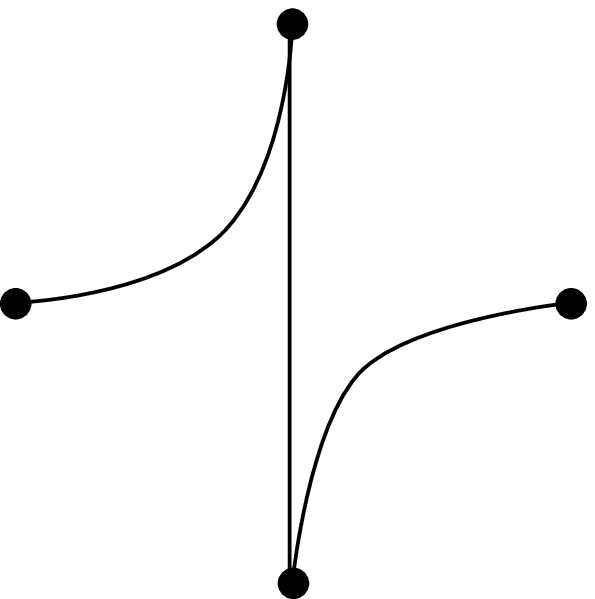}
  \label{fig:ab2}
  \caption{  $\Wh(*)\{ab^2\}$}
\end{minipage}

\bigskip \bigskip
\begin{minipage}[b]{0.45\textwidth}
\labellist \small
\pinlabel $b$ [bl] at 87 164
\pinlabel $\bar{b}$ [tl] at 87 3
\pinlabel $a$ [tl] at 165 83
\pinlabel $\bar{a}$ [tr] at 4 83
\endlabellist
  \centering
    \includegraphics[height=\figstandardheight]{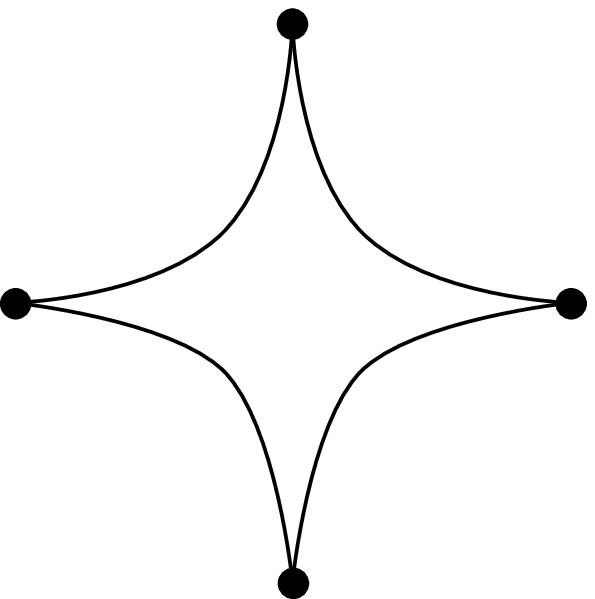}
  \caption{  $\Wh(*)\{ab\bar{a}\bar{b}\}$}
  \label{fig:Wh(abAB)}
\end{minipage}
\hfill
\begin{minipage}[b]{0.45\textwidth}
\labellist \small
\pinlabel $b$ [bl] at 87 164
\pinlabel $\bar{b}$ [tl] at 87 3
\pinlabel $a$ [tl] at 165 83
\pinlabel $\bar{a}$ [tr] at 4 83
\endlabellist
  \centering
\includegraphics[height=\figstandardheight]{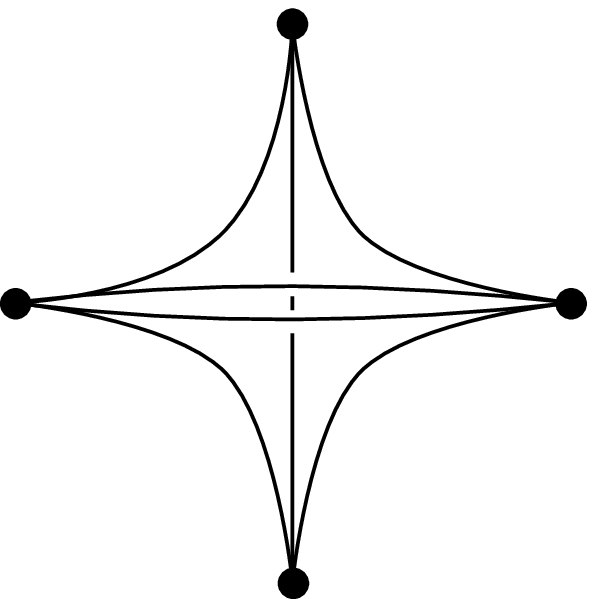}
  \caption{$\Wh(*)\{a^2ba^2\bar{b}^2\}$}
  \label{fig:1Wh(a2ba2B2)}
\end{minipage}
\end{figure}

Notice that $\Wh(*)\{a\}$ is disconnected; the vertices $b$ and $\bar{b}$
are isolated. 

$\Wh(*)\{ab^2\}$ is connected but becomes disconnect if
the vertex $b$ (or $\bar{b}$) is deleted;
$b$ and $\bar{b}$ are \emph{cut vertices}.

$\Wh(*)\{ab\bar{a}\bar{b}\}$ is connected and has no cut
vertices.

$\Wh(*)\{a^2ba^2\bar{b}^2\}$ is also connected with no cut vertices,
and has multiple edges between vertices $a$ and $\bar{a}$.

More generally, one can make a Whitehead graph representing  finitely
many words $w_1,\dots, w_m$.
We call this Whitehead graph $\Wh(*)\{w_1,\dots ,w_m\}$ or  
$\Wh(*)\{\mathcal{L}\}$, where $\mathcal{L}$ is the line pattern generated by
$\{w_1,\dots , w_m\}$.

\subsection{Whitehead Automorphisms}
Let $\phi$ be an automorphism of $F$. 
Applying $\phi$ changes the Whitehead graph
$\Wh_\mathcal{B}(*)\{w_1,\dots , w_m\}$ to the Whitehead graph
$\Wh_\mathcal {B}(*)\{[\phi(w_1)],\dots , [\phi(w_m)]\}$, where  $[\phi(w_i)]$
means choose a cyclically reduced word in the conjugacy class of $\phi(w_i)$.

An automorphism that permutes $\mathcal{B}$ or swaps a generator with
its inverse gives an isomorphic Whitehead graph.

\begin{definition}\label{definition:whaut}
A \emph{Whitehead automorphism} is an automorphism of the following
form:
Pick $x\in \mathcal{B}\cup\bar{\mathcal{B}}$, a generator or the
inverse of a generator.
Pick $Z\subset \mathcal{B}\cup\bar{\mathcal{B}}$ such that $x\in Z$
and $\bar{x}\notin Z$.

  Define an automorphism $\phi_{x,Z}$ by defining $\phi_{x,Z}(x)=x$ and for
the rest of the generators $y\in\mathcal{B}$:
\[\phi_{x,Z}(y)=
\begin{cases}
  y & \text{ if } y\notin Z\text{ and }\bar{y}\notin Z\\
  xy & \text{ if } y\in Z\text{ and }\bar{y}\notin Z\\
  y\bar{x} & \text{ if } y\notin Z\text{ and }\bar{y}\in Z\\
  xy\bar{x} & \text{ if } y\in Z\text{ and }\bar{y}\in Z\\
\end{cases}
\]
\end{definition}

We say that the automorphism $\phi_{x,Z}$ is the \emph{Whitehead
automorphism that pushes $Z$ through $x$}.

To visualize what is happening, consider the rose with one vertex and
one oriented loop for each element of $\mathcal{B}$. The fundamental group is $F$.
The Whitehead automorphism $\phi_{x,Z}$ is the automorphism of the
fundamental group induced by the homotopy equivalence that pushes 
one or both ends of the $y$--loop around the $x$--loop according to whether
$y$ or $\bar{y}$ or both are in $Z$, or leaves the $y$--loop alone if
neither $y$ nor $\bar{y}$ are in $Z$. See also \fullref{sec:whautrevisit}.

Define the \emph{complexity} of the collection $w_1,\dots,
w_n$ to be the number of edges of $\Wh(*)\{w_1,\dots, w_m\}$.   
This is equivalent to the sum of the lengths of the $w_i$, and also  half the sum of the valences of the
vertices. 

Comparing $\Wh_\mathcal{B}(*)\{w_1,\dots , w_m\}$ to
$\Wh_\mathcal {B}(*)\{[\phi_{x,Z}(w_1)],\dots , [\phi_{x,Z}(w_m)]\}$
we see that the valences of vertices other than $x$ and $\bar{x}$ do
not change. 
The new valence of $x$ and $\bar{x}$ is equal to the number of edges
that go between $Z$ and $Z^c$.
Thus, the Whitehead automorphism reduces the complexity of the
Whitehead graph exactly when there are fewer
edges joining $Z$ and $Z^c$ than the valence of $x$.

\begin{reftheorem}
  $\Aut(F)$ is generated by:
  \begin{enumerate}
  \item exchanges of a generator with its inverse
\item permutations of the generators
\item Whitehead automorphisms
  \end{enumerate}
\end{reftheorem}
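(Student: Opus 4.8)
The plan is to show that every $\phi\in\Aut(F)$ lies in the subgroup $H$ generated by the three listed families. Since $H$ is closed under inverses, it suffices to transform the ordered basis $(\phi(a_1),\dots,\phi(a_n))$ into the standard basis $(a_1,\dots,a_n)$ by left-composing with automorphisms from $H$: if $g_k\circ\cdots\circ g_1\circ\phi=\mathrm{id}$ with each $g_j\in H$, then $\phi=g_1^{-1}\circ\cdots\circ g_k^{-1}\in H$. Because $\Aut(F)$ acts simply transitively on ordered free bases of $F$, the whole statement is equivalent to the claim that the three families act transitively on the set of ordered free bases.

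I would measure progress by the complexity $c(\phi)=\sum_i|\phi(a_i)|$, the total reduced length of the images of the basis; this is the natural analogue in this setting of the complexity defined above for the Whitehead graph. Its minimum possible value is $n$, attained precisely when every $\phi(a_i)$ has length one. In that case the length-one words $\phi(a_i)$ form a basis, which forces them to be the generators up to reordering and inversion, so $\phi$ is a product of a permutation (type 2) and inversions (type 1) and already lies in $H$. Since exchanges and permutations leave $c$ unchanged, the argument reduces to a single reduction lemma: if $c(\phi)>n$, then there is a Whitehead automorphism $\tau$ (type 3) with $c(\tau\circ\phi)<c(\phi)$. Induction on $c$ then completes the proof.

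The heart of the matter, and the step I expect to be the main obstacle, is this reduction lemma -- the ``peak reduction'' phenomenon at the core of Whitehead's work. The idea is to study the Whitehead graph of the basis and exploit its cut structure: as computed just above, pushing a subset $Z$ through a letter $x$ changes the valence at $x$ and $\bar{x}$ to the number of edges between $Z$ and $Z^c$, so a Whitehead automorphism strictly shortens the complexity exactly when $x$ admits a separating set $Z$ with fewer crossing edges than the valence of $x$. One must therefore produce such a valence-reducing cut whenever $c>n$. This is where one genuinely uses that the $\phi(a_i)$ \emph{freely generate}: the associated combinatorial picture (disjoint arcs on the boundary of a handlebody, in the Stallings--Stong formulation) cannot be ``tight'' unless all the words have length one. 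Ruling out the possibility that every single Whitehead move must first increase length is the delicate point, and it is the content one would either establish by the classical peak-reduction argument or import from the literature, for example Lyndon--Schupp \cite{LynSch77}.

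A shorter route, which sidesteps peak reduction entirely, is to take Nielsen's generating theorem as known: $\Aut(F)$ is generated by inversions, permutations, and elementary transvections $a_i\mapsto a_ja_i$. Inversions and permutations are exactly types (1) and (2), and each transvection is itself a Whitehead automorphism: in the notation of \fullref{definition:whaut} the map $a_i\mapsto a_ja_i$ is $\phi_{a_j,\,\{a_i,a_j\}}$, since that automorphism fixes $a_j$, fixes every $a_k$ with $k\neq i,j$, and sends $a_i$ to $a_ja_i$. Hence the three families contain a generating set for $\Aut(F)$ and the theorem follows at once. I would present the complexity argument as the primary proof, since it stays within the framework of Whitehead graphs developed in this section, but flag the Nielsen shortcut as the quickest verification.
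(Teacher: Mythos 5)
Your proposal is correct, and in fact your ``shortcut'' is exactly the paper's proof: the paper disposes of this theorem in one line, observing that the three families contain the Nielsen generators of $\Aut(F)$ (citing Nielsen's 1924 theorem). Your verification that the transvection $a_i\mapsto a_ja_i$ is the Whitehead automorphism $\phi_{a_j,\{a_i,a_j\}}$ of \fullref{definition:whaut} is a worthwhile fleshing-out of what the paper merely calls ``clear'' -- checking that $a_j\in Z$, $\bar{a}_j\notin Z$, that $a_j$ is fixed, and that every $a_k$ with $k\neq i,j$ is untouched is precisely the computation the paper leaves to the reader. Your \emph{primary} route, the complexity-reduction induction on $c(\phi)=\sum_i|\phi(a_i)|$, is a genuinely different argument, and its framing (base case $c=n$ forces a signed permutation of the basis; inversions and permutations preserve $c$) is sound. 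But be aware of what it costs: the reduction lemma you isolate -- that any basis with $c>n$ admits a complexity-decreasing Whitehead move -- is not a routine step; it is the peak-reduction theorem itself, the substantive content of Whitehead's papers, and you do not prove it but import it from Lyndon--Schupp. So as written, your primary proof relies on a citation at least as heavy as the paper's single citation to Nielsen, while invoking far more machinery. The honest comparison is that your route buys conceptual continuity with the section's Whitehead-graph framework (complexity, valence-reducing cuts), whereas the paper's route buys brevity by reducing to a classical generation theorem whose proof is elementary Nielsen reduction; for the purpose of this paper, which only needs the generating set, the paper's choice is the right economy, and your shortcut paragraph reproduces it faithfully.
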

  This is clear since this set of automorphisms contains the Nielsen
  generators for $\Aut(F)$ \cite{Nie24}.

Whitehead's Algorithm is as follows: 
First, check if any Whitehead automorphisms reduce
the complexity of the Whitehead graph. Repeat. Once you have reduced
to minimal complexity, there are only finitely many graphs to
consider. Build a graph with one vertex for each possible Whitehead
graph with the given complexity, and an edge between two vertices if
one of the given generators of the automorphism groups takes one graph
to the other. One can then show that the desired automorphism exists
if and only if the reduced Whitehead graphs for the two lists of words
lie in the same connected component of this graph.

If $\{w_1,\dots , w_m\}$ is a subset of a free basis then the minimal
complexity Whitehead graph should have  $m$ disjoint edges.

If there is a free splitting $F=F'*F''$ with every $w_i$ in $F'$ or
$F''$ then the minimal complexity Whitehead graph should be disconnected. 

The presence of a cut vertex in the Whitehead graph indicates that the
graph is not reduced. If $x$ is a cut vertex, let $Z$ be the union of
$\{x\}$ and the vertices
of a connected component
of $\Wh(*)\{\mathcal{L}\}\setminus \{x\}$ not containing $\bar{x}$.
The Whitehead automorphism $\phi_{x,Z}$ reduces complexity.

One application of Whitehead's Algorithm is that a word $w$ is an
element of a free basis of $F=F_n$ if and only if the minimal complexity
Whitehead graph for $w$ consists of a single edge and $2(n-1)$
isolated vertices.

More generally, the \emph{width} of an element $w$ is the rank of the
smallest free factor of $F$ containing $w$. The minimal complexity
Whitehead graph for an element of width $m$ in $F=F_n$ consists of
$2(n-m)$ isolated vertices and a connected graph without cut vertices
on the remaining vertices.

\section{Whitehead Graphs and the Topology of the Decomposition Space}\label{sec:Whandtopology}

The decomposition space associated to a line pattern first appears
in the literature in work of Otal \cite{Ota92}, who proves that the
decomposition space is connected if and only if there exists a basis
$\mathcal{B}$ of $F$ such that $\Wh_\mathcal{B}(*)$ is connected
without cut vertices.

A similar theorem appears in the thesis of
Reiner Martin \cite{Mar95}, who references notes of Bestvina.

\begin{theorem}{\rm {\cite[Theorem 49]{Mar95}}}\label{theorem:Bestvina}
  For any $w\in F-\{1\}$, the following are equivalent:
  \begin{enumerate}
  \item $w$ is contained in a proper free factor of $F$.
\item The width of $w$ is strictly less than the rank of $F$.
\item There exists a disconnected Whitehead graph of $w$.
\item The decomposition space associated to the pattern generated by
  $w$ is disconnected.
\item Every Whitehead graph for $w$ with no cut vertices is
  disconnected.
  \end{enumerate}
\end{theorem}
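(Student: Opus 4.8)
The plan is to split the five conditions into a combinatorial cluster---(1), (2), (3), (5), which concern free factors and Whitehead graphs---and the single topological condition (4), and to bridge the two with Otal's theorem \cite{Ota92}. First, (1) $\Leftrightarrow$ (2) is nothing more than the definition of width: the smallest free factor containing $w$ is proper exactly when its rank is less than that of $F$. Next, I would dispatch (4) $\Leftrightarrow$ (5) with no extra work. Otal's theorem says the decomposition space is connected if and only if there is a basis $\mathcal{B}$ for which $\Wh_{\mathcal{B}}(*)\{w\}$ is connected and has no cut vertices. But (5) asserts that \emph{every} Whitehead graph without cut vertices is disconnected, which is precisely the negation of that existence statement; hence (5) holds if and only if the decomposition space is disconnected, i.e.\ if and only if (4) holds.

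It then remains to prove (2) $\Leftrightarrow$ (3) $\Leftrightarrow$ (5) using classical Whitehead theory. The implication (2) $\Rightarrow$ (3) is immediate: in a basis adapted to a splitting $F = F' * F''$ with $w \in F'$, the generators of $F''$ never occur in $w$, so their vertices are isolated and the Whitehead graph is disconnected. For (5) $\Rightarrow$ (2), I use that a cut vertex always produces a complexity-reducing automorphism $\phi_{x,Z}$, so a minimal-complexity graph has no cut vertices; by (5) it is then disconnected, and the description of minimal-complexity graphs recalled in the discussion of Whitehead's Algorithm (for an element of width $m$ in $F_n$ there are exactly $2(n-m)$ isolated vertices) forces $m < n$. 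For the converse (2) $\Rightarrow$ (5) I would invoke Whitehead's peak-reduction lemma, which guarantees that a connected Whitehead graph with no cut vertices is already of minimal complexity: were some no-cut-vertex graph connected, the structure theorem would give width $n$, contradicting (2), so every such graph must be disconnected.

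The heart of the argument is (3) $\Rightarrow$ (2): a disconnected Whitehead graph forces $w$ into a proper free factor. I would prove this by examining the partition of the vertices of $\Wh_{\mathcal{B}}(*)\{w\}$ into connected components, using that each cyclically consecutive pair $x_i x_{i+1}$ of $w$ contributes the edge $\{\bar{x}_i, x_{i+1}\}$. If every generator has both $x$ and $\bar{x}$ in a single component, then consecutive letters of $w$ always stay in one component, so all of $w$ lives in one component and $w$ lies in the free factor generated by that component's generators, the remaining generators being absent. Otherwise some generator is split, giving a letter $x$ and a union of components $Z$ with $x \in Z$, $\bar{x} \notin Z$, and no edges leaving $Z$; then $\phi_{x,Z}$ strictly reduces complexity, and since width is an automorphism invariant I can iterate.

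The main obstacle is precisely this last lemma. The delicate point in the split case is to show that applying $\phi_{x,Z}$---where $Z$ is a union of components across which no edges run---preserves disconnectedness, so that repeated reduction is guaranteed to terminate in the \emph{unsplit} case rather than stall at a connected graph; this is the content of the classical Whitehead--Stallings cut-vertex analysis \cite{LynSch77, Sta99}. Granting it, I have (2) $\Leftrightarrow$ (3), and combined with (1) $\Leftrightarrow$ (2), (2) $\Leftrightarrow$ (5), and (4) $\Leftrightarrow$ (5) from the first two paragraphs, all five conditions are equivalent. I expect that essentially all of the topological input is absorbed into Otal's theorem, so that the real labor lies in the combinatorial lemma of the third paragraph.
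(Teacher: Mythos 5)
Your skeleton mostly parallels the paper's: (1)$\Leftrightarrow$(2) by definition of width, the combinatorial heart (3)$\Leftrightarrow$(1),(2) deferred to classical Whitehead theory (the paper likewise dismisses it as ``a consequence of Whitehead's Algorithm''), and (4)$\Leftrightarrow$(5) carrying the same content as the paper's \fullref{lemma:disconnected} and \fullref{corollary:connected}, which it credits to Otal anyway. But there is a genuine gap in your (2)$\Rightarrow$(5): the lemma you attribute to peak reduction --- that a connected Whitehead graph with no cut vertices is already of minimal complexity --- is false. Take $w=abab\bar{a}\bar{b}$ in $F_2=\left<a,b\right>$. Its Whitehead graph has edges $\{\bar{a},b\}$ (twice), $\{a,\bar{b}\}$ (twice), $\{\bar{a},\bar{b}\}$ and $\{a,b\}$; every vertex has valence $3$, the graph is connected, and deleting any single vertex leaves a connected graph, so there is no cut vertex. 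Yet for $Z=\{a,\bar{b}\}$ only two edges ($\{a,b\}$ and $\{\bar{a},\bar{b}\}$) cross from $Z$ to $Z^c$, which is less than the valence $3$ of $a$, so $\phi_{a,Z}$ strictly reduces complexity: indeed $\phi_{a,Z}(w)=ab^2\bar{a}\bar{b}$, of length $5$. The absence of cut vertices only rules out the particular complexity-reducing moves that cut vertices supply; it does not make the graph peak-reduced.

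This is not a repairable detail of bookkeeping: in your logical graph, (2)$\Rightarrow$(5) is the \emph{only} arrow from the cluster (1)--(3) into the cluster (4)--(5), since your other bridges ((5)$\Rightarrow$(2) and (4)$\Leftrightarrow$(5)) point the wrong way or stay inside $\{4,5\}$. With that arrow broken the five conditions do not close into an equivalence. The correct classical tool is Whitehead's cut-vertex lemma (the Stallings--Stong ``diskbusting'' criterion): if $w$ lies in a proper free factor, then \emph{every} Whitehead graph of $w$ is disconnected or has a cut vertex --- this is (1)$\Rightarrow$(5) directly, and it is strictly stronger than the structure theorem for minimal-complexity graphs that you quote, which says nothing about non-minimal bases. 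Alternatively, route through the topology as the paper does: your adapted-basis argument gives (2)$\Rightarrow$(3); then (3)$\Rightarrow$(4) follows from the short observation that a disconnected $\Wh(*)$ partitions $\bdry\tree$ into two nonempty clopen unions of shadows joined by no line of the pattern, which descend to disjoint clopens in $\decomp$ (\fullref{lemma:disconnected}); and then your own Otal step supplies (4)$\Rightarrow$(5). Either repair is cheap, but as written the proof fails exactly at its load-bearing implication.
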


The goal of this section is to further explore
the relationship between generalizations of the Whitehead graph and
the topology of the decomposition space. 
In particular, we are interested in finite cut sets in the case that the
decomposition space is connected.

\begin{remark}
  The theorem stated in \cite{Mar95} has an additional equivalent
  condition: for any basis there exists a generalized Whitehead graph
  that is disconnected. We will not make use of this. In our notation,
  Martin's generalized Whitehead graph is $\Wh_\mathcal{B}(N_r(*))\{w\}$ (see below).
\end{remark}

\subsection{Geometric Interpretation and Generalizations}
Fix a free basis $\mathcal{B}$ for $F_n$, and let $\tree$ be the
corresponding Cayley graph. 

Let $\X$ be a closed, connected subset of $\closure{\tree}$.
Consider the connected components of $\closure{\tree}\setminus \X$.
Take these components as the vertices of a graph. 
Connect vertices $v_1,\,v_2$ by an edge  if there is a line in $l\in\mathcal{L}$ with
one endpoint in the component corresponding to $v_1$ and the other in
the component corresponding to $v_2$.
Call this graph $\Wh_\mathcal{B}(\X)\{\mathcal{L}\}$, and notice that when $\X=*$ is a
single vertex
this graph is exactly $\Wh_\mathcal{B}(*)\{\mathcal{L}\}$.
Since $\mathcal{L}$ is equivariant we get the same graph for any
choice of vertex.

  We will also give a combinatorial construction of our generalization of
  the Whitehead graph. However, the intuition that informs our
  arguments comes from the above geometric interpretation. 
One should visualize the Whitehead graph as the portion of the
line pattern that passes through a subset of a tree, rather than as an
abstract graph. Where appropriate, as in \fullref{fig:abaBloose} and
\fullref{fig:abouttosplice}, we have included the relevant portions of the
tree to aid in this visualization.

If $\X$ is a finite connected subset of $\tree$ we can build up
Whitehead graphs $\Wh(\X)\{\mathcal{L}\}$ in a combinatorial way
by \emph{splicing} together copies of $\Wh(*)\{\mathcal{L}\}$ for each of the vertices of
$\X$.
Splicing is a method of combining graphs. 
The term was coined by
Manning in \cite{Man09} where he uses splicing to construct Whitehead
graphs of finite covers of a handlebody from the Whitehead graph of
the base handlebody.

Let $v$ be a vertex of a graph $\Gamma$ and let $v'$ be a vertex of a
graph $\Gamma'$ of valence equal to the valence of $v$.
Given a bijection between edges of $\Gamma$ incident to $v$ and
edges of $\Gamma'$ incident to $v'$, the \emph{splicing map}, form a
new graph whose vertices are the vertices of $\Gamma$ and $\Gamma'$
minus the vertices
$v$ and $v'$. 
Edges not incident to $v$ or $v'$ are retained in the new
graph. Finally, for each pair of edges $[w,v]$ in $\Gamma$ and
$[w',v']$ in $\Gamma'$ identified by the splicing map, add an edge
$[w,w']$ in the new graph.

In other words, we have deleted $v$ and $v'$, leaving the edges
incident to those vertices with ``loose ends''. The splicing map tells
us how to splice a loose end at $v$ to  a loose end at $v'$ to get an
edge in the new graph.

For Whitehead graphs the splicing map is determined by the line
pattern.
Suppose we have adjacent vertices $g$ and $ga$ in a Cayley graph, and
corresponding Whitehead graphs $\Wh(g)$ and $\Wh(ga)$.
We splice them together to build the Whitehead graph
$\Wh([g,ga])$. The $g$ vertex and $ga$ vertex in $\tree$ are adjacent
across an $a$--edge, so the splicing vertices are the $a$--vertex of
$\Wh(g)$ and the $\bar{a}$--vertex of $\Wh(ga)$. Each edge in $\Wh(g)$
incident to $a$ corresponds to a length two subword of one of the generators of
the line pattern of the form $xa$ or $\bar{a}x$.
Suppose an edge corresponds to a subword $xa$, and suppose the next
letter is $y$, so there is a length three subword $xay$. 
We define the splicing map to identify the edge corresponding to this
particular instance of the
subword $xa$ to the edge in $\Wh(ga)$ (incident to $\bar{a}$) corresponding to this particular
instance of the subword $ay$.

We can make the splicing easier to visualize if we draw the Whitehead graphs with
loose ends at the vertices. 
\fullref{fig:abaBloose} shows the Whitehead graph for the pattern
generated by the words $ab$ and $a\bar{b}$ in
$F=F_2=\left<a,b\right>$, along with the underlying tree.
The word $ab$ will
contribute an edge from $\bar{a}$ to $b$ and an edge from $\bar{b}$ to
$a$. The twists in the graph indicate the splicing maps.

\begin{figure}[h]
\labellist \small
\pinlabel $*$ [tr] at 92 89
\pinlabel $a$ [t] at 180 90
\pinlabel $\bar{a}$ [t] at 5 90
\pinlabel $b$ [r] at 91 179
\pinlabel $\bar{b}$ [r] at 91 3
\endlabellist
  \centering
  \includegraphics[height=1.5\figstandardheight]{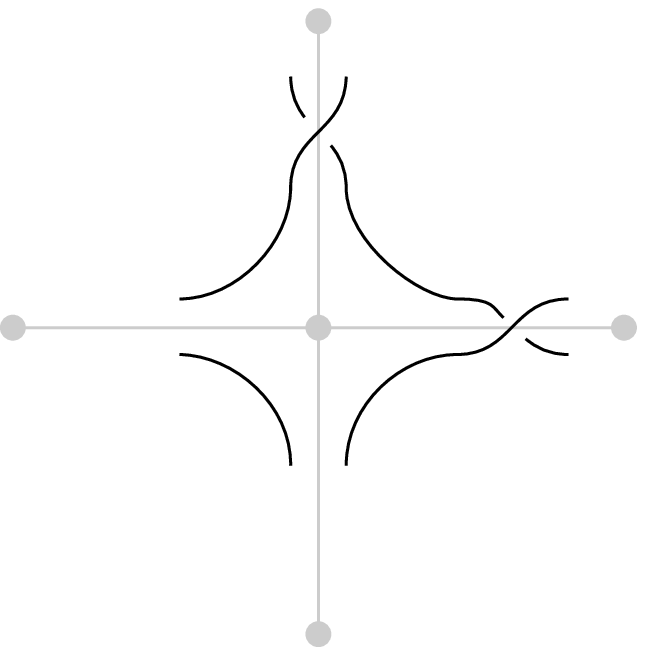}
  \caption{$\Wh(*)\{ab, a\bar{b}\}$}
  \label{fig:abaBloose}
\end{figure}

Let $*$ be the identity vertex.
Take a copies of this graph at $*$ and at $a$ and splice them
together. We get the splicing map by considering the words. 
There is an $ab$--line at $*$. If the first letter is $a$, the previous
letter was $b$, so we see an edge from $\bar{b}$ to $a$ in the
Whitehead graph at $*$.
The next letter is $b$, so in the Whitehead graph at $a$ we see an
edge from $\bar{a}$ to $b$, and the twist in the graph indicates that
these two edges should be spliced together.

Similarly, there is an $a\bar{b}$--line at $*$. It contributes an edge
from $b$ to $a$ in the Whitehead graph at $*$, and this continues on
to an edge from $\bar{a}$ to $\bar{b}$ in the Whitehead graph at $a$.

\begin{note}
  Unless noted otherwise, figures are drawn so that the splicing map is achieved by an orientation preserving isometry of the page.
\end{note}
\begin{figure}[h!]
\labellist \small
\pinlabel $*$ [tr] at 91 89
\pinlabel $a$ [tr] at 212 90
\pinlabel $\bar{a}$ [tr] at 2 90
\pinlabel $b$ [r] at 90 179
\pinlabel $\bar{b}$ [r] at 90 3
\pinlabel $ab$ [r] at 211 179
\pinlabel $a\bar{b}$ [r] at 211 3
\pinlabel $a^2$ [lt] at 299 93
\endlabellist
  \centering
  \includegraphics[height=1.5\figstandardheight]{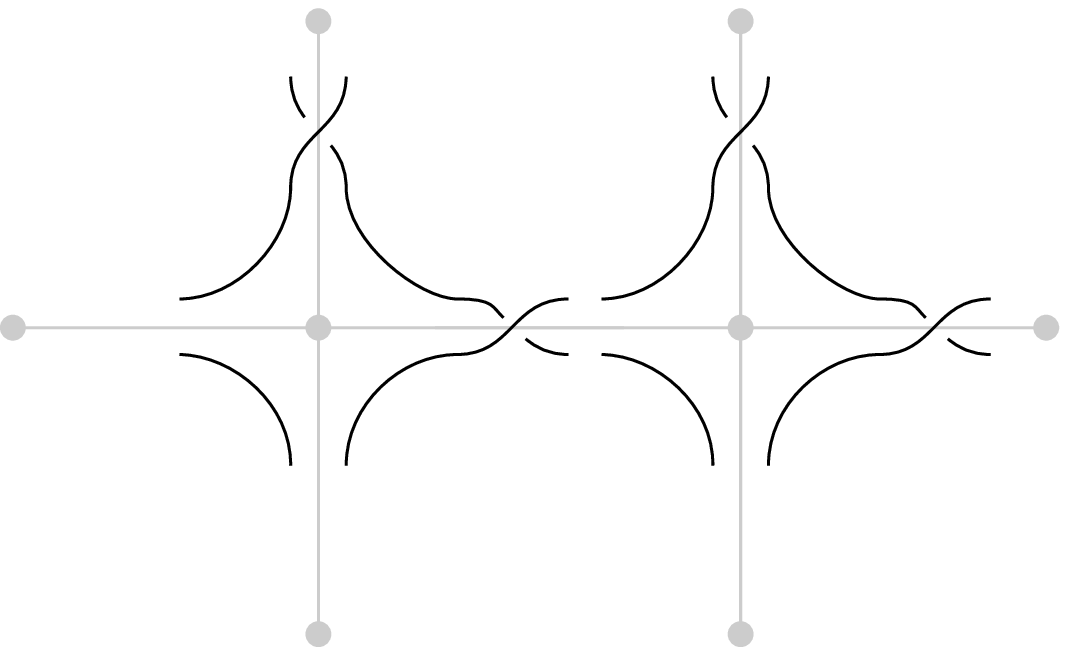}
  \caption{$\Wh(*)\{ab,a\bar{b}\}$ and $\Wh(\{a\})\{ab,a\bar{b}\}$}
\label{fig:abouttosplice}
\end{figure}

\begin{figure}[h!]
  \centering
  \includegraphics[height=\figstandardheight]{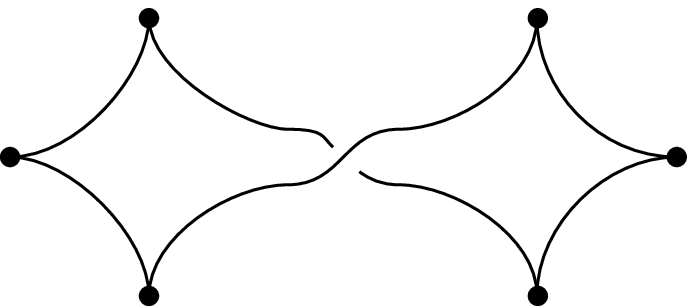}
  \caption{$\Wh([*,a])\{ab,a\bar{b}\}$}
\end{figure}

The geometric and splicing constructions produce the same graph for
sets $\X$ with finitely many vertices. 
We could try to take limits of the spliced graphs when $\X$ is
infinite, but
if $\X\subset\closure{\tree}$ contains endpoints of some $l\in\mathcal{L}$, then splicing does not actually produce a graph.

If both endpoints of $l$ are in $\closure{\X}$ then after finitely many
splices there is an edge corresponding to $l$, but in the limit the
edge grows to be an open interval not incident to any vertices; the vertices escape to infinity.
This line does not occur if we follow the geometric definition, because it is not joining two different components of the complement of $\X$ in $\closure{\tree}$.
Similarly, if only one endpoint of $l$ is in $\closure{\X}$ then splicing produces a graph $\mathcal{G}$ with a half line attached. 
If we throw out these ``non-closed edges'' we get the graph $\Wh(\X)\{\mathcal{L}\}$ of the geometric definition.

\begin{remark}
  Stong \cite{Sto97} defines a generalized Whitehead graph that
  coincides with our definition, but, like Martin \cite{Mar95}, only
  makes use of $\Wh_\mathcal{B}(N_r(*))\{w\}$.
\end{remark}

\subsection{Whitehead Automorphisms Revisited}\label{sec:whautrevisit}
Let us consider how application of a Whitehead automorphism changes
the line pattern. 

Suppose $x$, $y$ and $z$ are in $\mathcal{B}\cup\bar{\mathcal{B}}$
with $y\neq z$. Consider a Whitehead automorphism
$\phi=\phi_{x,Z}$ (recall \fullref{definition:whaut}).
Let $l\in\lines$ be a line that goes through vertices $y$, $*$ and
$z$, where $*$ is the identity vertex.
The line $l$ is the geodesic that goes through vertices of the form
$\{y(\bar{y}zu)^m\}_{m\in\mathbb{Z}}$, where $u$ is some word in $F$
that does not begin with $\bar{z}$ or end with $y$.

First suppose that $y,z\in Z$, $y,z\neq x$ and $\bar{y},\bar{z}\notin
Z$, so that $\phi(x)=x$, $\phi(y)=xy$ and $\phi(z)=xz$. Then $\phi(l)$
is the line that includes vertices of the form:
\[\{\phi(y(\bar{y}zu)^m)\}_{m\in\mathbb{Z}}=\{xy(\bar{y}\bar{x}xz\phi(u))^m\}_{m\in\mathbb{Z}}=\{xy(\bar{y}z\phi(u))^m\}_{m\in\mathbb{Z}}\]
Since $u$ does not begin with $\bar{z}$ or end in $y$, the same is
true for $\phi(u)$. Therefore, $\phi(l)$ goes through vertices $xy$,
$x$ and $xz$.
The line $l$ that went through $*$ has been ``pushed through'' the $x$
edge to a line $\phi(l)$ that goes through $x$ and not
through $*$.

Using similar arguments one can show:
\begin{enumerate}
\item $\phi(l)$ goes through $x$
and not through $*$ if $y$ and $z$ are in $Z$.
\item $\phi(l)$ goes through $*$ and not through $x$ if $y$ and $z$
  are in $Z^c$.
\item $\phi(l)$ goes through both $*$ and $x$ if exactly one of $y$ or
  $z$ is in $Z$.
\end{enumerate}

\subsection{Cut Sets in the Decomposition Space}
The next two lemmas use essentially the same ideas that go into the
proofs of \fullref{theorem:Bestvina} in \cite{Mar95} and
\cite[Proposition 2.1]{Ota92}.

\begin{lemma}\label{lemma:disconnected}
 If for some choice of basis $\Wh(*)$ is disconnected, then $\decomp$ is disconnected.
\end{lemma}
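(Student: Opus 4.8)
The plan is to show that if $\Wh_\mathcal{B}(*)\{\lines\}$ is disconnected for some basis $\mathcal{B}$, then the induced partition of the vertex set gives a way to split the whole Whitehead graph $\Wh_\mathcal{B}(\X)\{\lines\}$ for every finite connected $\X$, and this in turn separates the boundary $\bdry\tree$ in a way that descends to a separation of $\decomp$. Since the decomposition space is a quotient of $\bdry\tree$ by the equivalence that glues together the two endpoints of each line, the key point is to exhibit a clopen partition of $\decomp$, equivalently an open partition of $\bdry\tree$ into two nonempty pieces, neither containing a good point split across the two pieces.

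First I would fix the basis $\mathcal{B}$ realizing the disconnection and let the vertex set $\mathcal{B}\cup\bar{\mathcal{B}}$ of $\Wh(*)$ split as a disjoint union $P\sqcup Q$ with no edge of $\Wh(*)$ joining $P$ to $Q$. The generators in $P$ determine, at each vertex $g$ of $\tree$, a family of ``outgoing directions''; I would use these to build a subset $U\subset\bdry\tree$ consisting of boundary points $\xi$ whose geodesic ray from $*$ leaves through an edge labeled by an element of $P$ at every vertex after the last time the path changes its $P/Q$ status. Concretely, using the splicing description of $\Wh(\X)\{\lines\}$, the absence of $P$--$Q$ edges in $\Wh(*)$ means no line of the pattern has a length-two subword $\bar{p}q$ or $\bar{q}p$ with $p\in P$, $q\in Q$; hence no line can pass directly from a ``$P$-shadow'' to a ``$Q$-shadow'' without an intervening turn that the Whitehead graph forbids.

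The heart of the argument is then to check that this partition of $\bdry\tree$ is (i) by clopen sets and (ii) compatible with the quotient $q\from\bdry\tree\to\decomp$, meaning that for every line $l\in\lines$ the two endpoints $l^+,l^-$ lie in the same piece. Claim (i) is routine: shadows $\shadow^*_\infty(\cdot)$ form a basis of clopen sets, so a union of them defined by a combinatorial rule on the labels of outgoing edges is clopen. Claim (ii) is where the disconnection hypothesis does the real work: because $\Wh(*)$ has no edge between $P$ and $Q$, the two ends of any line must agree on which side of the partition they fall, so no good point of $\decomp$ is split. I expect claim (ii) — tracking a single bi-infinite line through the tree and ruling out, via the missing Whitehead edges, any crossing of the $P$/$Q$ interface — to be the main obstacle, since one must handle lines that wander back and forth near $*$ before escaping to infinity.

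Finally, I would conclude that $\hat U=q^{-1}(q(U))=U$ and its complement are a separation of $\bdry\tree$ into saturated clopen sets, so their images partition $\decomp$ into two nonempty open sets. Both pieces are nonempty because $P$ and $Q$ are each nonempty (a disconnected graph has vertices on both sides, and the pattern's lines force the corresponding shadows to be infinite), giving the desired disconnection of $\decomp$.
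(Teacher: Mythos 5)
Your high-level plan --- produce a saturated clopen separation of $\bdry\tree$ from the vertex partition of the disconnected Whitehead graph and push it down to $\decomp$ --- is the right one, and it is the paper's strategy. But the set $U$ you build does not work, and the step you defer as ``the main obstacle'' (claim (ii)) is precisely where your construction breaks while the correct construction makes it trivial. Your rule ``the ray leaves through a $P$-labeled edge at every vertex after the last time the path changes its $P/Q$ status'' is a tail condition: it is ill-defined for rays whose edge labels alternate between $P$ and $Q$ infinitely often, and such rays exist, even as endpoints of pattern lines. Concretely, for the pattern generated by $w=ab$ in $F_2$, the graph $\Wh(*)\{ab\}$ has components $P=\{\bar a, b\}$ and $Q=\{a,\bar b\}$, and the ray to $(ab)^\infty$ has labels $a,b,a,b,\dots$, changing status at every step; so $(ab)^\infty$ lies in neither $U$ nor its $Q$-counterpart, and your two sets fail to cover $\bdry\tree$. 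Your claim (i) is also false as stated: a union of infinitely many shadows selected by a tail rule is open but generally not closed (``eventually all $P$-labels'' is an $F_\sigma$ set, not clopen). Only \emph{finite} unions of shadows are automatically clopen.

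The paper's fix is much simpler: with $\mathcal{A}\sqcup\mathcal{A}'$ the vertex partition, it takes $\hat{A}=\bigcup_{a\in\mathcal{A}}\shadow_\infty^*(a)$, determined by the \emph{first} edge of the ray alone. This is a finite union of shadows, hence clopen, and saturation --- your claim (ii) --- is immediate rather than hard: a geodesic line in a tree does not ``wander back and forth near $*$''; if $l^+$ and $l^-$ lie in shadows of distinct edges at $*$, then $l$ passes through $*$ exactly once, and by the geometric description of $\Wh(*)$ (its edges correspond exactly to lines with endpoints in two distinct complementary shadows of $*$) this turn is an edge of $\Wh(*)$. One endpoint in $\hat{A}$ and one in $\hat{A}^c$ would therefore force a forbidden $\mathcal{A}$--$\mathcal{A}'$ edge. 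So every line has both endpoints on one side, $\hat{A}$ is $q$-saturated, and $q(\hat{A})$, $q(\hat{A}^c)$ are disjoint nonempty clopens disconnecting $\decomp$. As written, your proposal has a genuine gap: the separating set is misdefined, and the saturation step that carries the whole argument is left unproved.
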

\begin{proof}
  Let $*$ be the identity vertex in $\tree$. Let $\mathcal{B}$ be a free
  basis of $F$ such that $\Wh_\mathcal{B}(*)$ is not
  connected. Vertices of $\Wh(*)$ are in bijection with
  $\mathcal{B}\cup\bar{\mathcal{B}}$.
There is some partition of
  $\mathcal{B}\cup\bar{\mathcal{B}}$ into subsets $\mathcal{A}$ and
  $\mathcal{A}'$ so that no lines of $\lines$ connect
  $\mathcal{A}$ to $\mathcal{A}'$.
Let \[\hat{A}=\cup_{a\in \mathcal{A}}\shadow_\infty^*(a)\]
The sets $\hat{A}$ and $\hat{A}^c\subset\bdry \tree$ are both nonempty
clopens, sets that are both open and closed.
Since there are no lines of $\lines$ with one endpoint in
$\hat{A}$ and one in $\hat{A}^c$, their images in $\decomp$ are
disjoint nonempty clopens, so $\decomp$ is disconnected.
\end{proof}

\begin{lemma}\label{lemma:basicconnectivity}
Suppose there exists a free basis $\mathcal{B}$ of $F$ such that
$\Wh_\mathcal{B}(*)$ is connected without cut vertices. 
Let
$\tree$ be the Cayley graph of $F$ corresponding to $\mathcal{B}$.
Pick any edge $e$ in $\tree$.
Let $*$ and $v$ be the endpoints of $e$.
Let $\hat{A}=\shadow_\infty^*(v)$. That is, $\hat{A}$ is the ``half''
of $\bdry \tree$ on the ``$v$-side'' of $e$.
The set $A=q(\hat{A})$ is connected in $\decomp$.
\end{lemma}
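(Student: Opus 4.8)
The plan is to reduce connectivity of $A=q(\hat A)$, where $\hat A=\shadow_\infty^*(v)$ and $v$ is the generator $a$ across the edge $e$, to a purely combinatorial connectivity statement at each finite depth in the tree, and then to promote that to the topological statement by a compactness argument. The main input is that, by left-translation, $\Wh(w)\cong\Wh_{\mathcal{B}}(*)$ is connected without cut vertices for \emph{every} vertex $w$, not just for $*$.

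First I would record the self-similar decomposition of the half-boundary. Writing $W_R$ for the set of reduced words of length $R$ beginning with $a$ (the depth-$R$ vertices on the $v$-side of $e$), the shadows $\{\shadow_\infty^*(w)\}_{w\in W_R}$ are disjoint clopen sets partitioning $\hat A$, and they refine as $R$ grows. For each $R$ I form a graph $\Lambda_R$ with vertex set $W_R$, joining $w$ to $w'$ whenever some line $l\in\lines$ has one endpoint in $\shadow_\infty^*(w)$ and the other in $\shadow_\infty^*(w')$. Because the only non-singleton fibers of $q$ are endpoint pairs of lines, two distinct shadows have intersecting images in $\decomp$ exactly when they are joined in $\Lambda_R$, so $\Lambda_R$ records precisely how $q$ glues the depth-$R$ pieces.

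The combinatorial heart is the claim that $\Lambda_R$ is connected for every $R$, which I would prove by induction on $R$, the base case $\Lambda_1$ being a single vertex. For the inductive step, each $w\in W_R$ has children $wc$ indexed by the directions $c\neq\bar c_R$ off $w$ (here $c_R$ is the last letter of $w$). Deleting from $\Wh(w)$ the single vertex $\bar c_R$ corresponding to the direction back toward the parent leaves a connected graph on exactly the children directions; since an edge of $\Wh(w)$ between $c$ and $c'$ comes from a line through $w$ exiting in those two directions, this shows the children of $w$ form a connected block inside $\Lambda_{R+1}$. Finally, any edge of $\Lambda_R$ between $w$ and $w'$ lifts to an edge of $\Lambda_{R+1}$ between some child of $w$ and some child of $w'$, so combining connectivity of $\Lambda_R$ with the internal connectivity of each child-block yields a path between any two vertices of $\Lambda_{R+1}$. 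It is exactly here that the no-cut-vertex hypothesis is used, and used for every direction, since the deleted parent vertex ranges over all of $\mathcal{B}\cup\bar{\mathcal{B}}$.

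The last step, which I expect to be the main obstacle, is to pass from $\Lambda_R$-connectivity at all finite levels to connectivity of $A$. Here I would argue by contradiction: a separation of the compact set $A$ into nonempty closed sets $A=P\sqcup Q$ pulls back to a clopen partition $\hat A=\hat A_P\sqcup\hat A_Q$, and saturation of the fibers of $q$ forces that no line with both endpoints in $\hat A$ has one endpoint in $\hat A_P$ and the other in $\hat A_Q$. Since $\bdry\tree$ is a Cantor set with the shadows as a basis, compactness lets me choose $R$ large enough that $\hat A_P$ and $\hat A_Q$ are each unions of depth-$R$ shadows, two-coloring $W_R$ with both colors present; connectivity of $\Lambda_R$ then produces an edge between oppositely colored vertices, i.e. a line with both endpoints in $\hat A$ straddling $\hat A_P$ and $\hat A_Q$, the desired contradiction. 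The delicate point is this refinement, namely realizing the clopen separation at a single finite level $R$; everything else is bookkeeping.
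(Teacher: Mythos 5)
Your proposal is correct, and it rests on the same two ingredients as the paper's proof---compactness of $\bdry\tree$ to reduce a putative separation of $A$ to finitely many shadows, and connectivity of a finite Whitehead-type graph built from translates of $\Wh(*)$---but the combinatorial core is organized genuinely differently. The paper does not pass to a uniform depth: it writes the two clopen pieces of $\hat{A}$ as finite unions of shadows $\shadow_\infty^*(x_i)$, $\shadow_\infty^*(y_j)$ at mixed depths, takes the convex hull of these vertices together with $v$, and forms the generalized Whitehead graph $\Wh(\X)$ over the interior vertices plus $v$; it then asserts, via the splicing construction, that $\Wh(\X)$ is connected \emph{without cut vertices}, which is exactly what is needed so that deleting the single vertex corresponding to the edge $e$ leaves a connected graph, along which the two colors are propagated to show one piece is empty. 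Your graph $\Lambda_R$ is essentially $\Wh(\X_R)$ with the $e$-vertex deleted, for $\X_R$ the radius-$(R-1)$ ball on the $v$-side, and your induction on $R$ proves its connectivity directly, invoking the no-cut-vertex hypothesis only in its raw form---deleting one vertex from a single copy of $\Wh(*)$---so you never need the stronger (and in the paper merely asserted) fact that spliced Whitehead graphs again have no cut vertices. What your route buys is self-containedness at precisely that point; what the paper's route buys is economy (the hull is adapted to the given separation, so the finite graph is as small as possible) and a construction, $\Wh(\hull)$ via splicing, that is reused throughout the paper, e.g.\ in \fullref{lemma:connectedhull}. Your final refinement step---realizing the clopen separation as a two-coloring of $W_R$ at a single large depth $R$, using that distinct depth-$R$ shadows are disjoint---is sound compactness bookkeeping, and your verification that the partition pulls back to a clopen, line-saturated partition of $\hat{A}$ is exactly parallel to the paper's use of $A'$ and $A''$.
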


\begin{proof}
  \begin{figure}[h]
\labellist \small
\pinlabel $\hat{A}$ [l] at 169 59
\pinlabel $e$ [t] at 78 58
\pinlabel $*$ [tl] at 35 58
\pinlabel $v$ [tr] at 126 58
\endlabellist

    \centering
    \includegraphics[width=.4\textwidth]{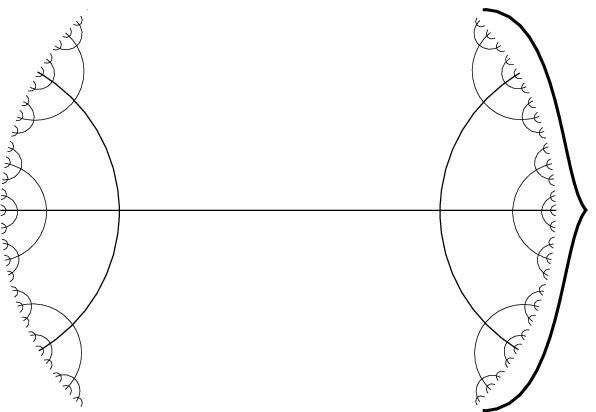}
    \caption{The boundary of the tree split into ``halves''.}
    \label{fig:halfboundary}
  \end{figure}

  Suppose  there are open sets $B$ and $C$ of $\decomp$ such that $A\subset B\cup C$ and $A\cap B\cap C=\emptyset$.
The set $\hat{A}$ is open in $\bdry \tree$, so $A'=\hat{A}\cap q^{-1}(B)$ and $A''=\hat{A}\cap q^{-1}(C)$ are open.
Assuming that $A'$ is nonempty, we will show that $A''$ must be empty, which implies $A$ is connected.
 
$\hat{A}$ is closed in $\bdry \tree$, so $A'$ and $A''$ are closed. 
Compactness of $\bdry \tree$ implies $A'$ and $A''$ are compact clopens. 
Since $A'$ is compact and open, there are finitely many vertices
$x_1,\dots ,x_a$ so that $A'=\cup_{i=1}^a\shadow_\infty^*(x_i)$

There is a similar finite collection $y_1,\dots , y_b$ that determines $A''$. 

Consider the convex hull $\hull$ of $\{x_i\}_{i=1}^a\cup\{y_j\}_{j=1}^b\cup
\{v\}$; it is a finite tree.
Call the vertices of $\hull$ other than $\{x_i\}_{i=1}^a\cup\{y_j\}_{j=1}^b\cup
\{v\}$ the ``interior vertices''.
Since $A'\cup A''=A$, $\hull$ includes all
edges incident to its interior vertices. 
Let $\X$ be the union of the set of interior vertices with $\{v\}$.
 
Construct the Whitehead graph 
$\Wh(\X)$.
It has $a+b+1$ vertices corresponding to the $x_i$ and $y_j$ and the
edge $e$ of $\tree$.

The graph is connected without cut points, since it can be
constructed by splicing together finitely many copies of
$\Wh(*)$, which is connected without cut points. In
particular, the vertex $e$ is not a cut vertex. 

Assume $A'$ is nonempty. If $x_1=v$ then $A'=A$, so $A''=\emptyset$,
and we are done. Otherwise, $x_1$ is a vertex of
$\Wh(\X)\setminus v$. An edge of $\Wh(\X)\setminus v$
incident to $x_1$ corresponds to a line $l\in\lines$ with one
endpoint in the shadow of $x_1$ and the other endpoint in the shadow
of $z$ for some $z\in \{x_i\}_{i=2}^a\cup\{y_j\}_{j=1}^b$.
In the decomposition space these two endpoints are identified, and we
already know that the image of the first endpoint is in $B$. This
means that $z$ must be in $\{x_2,\dots ,x_a\}$. 
Since $\Wh(\X)\setminus v$ is connected we conclude that all the vertices of $\Wh(\X)\setminus v$ belong to $\{x_1,\dots ,x_n\}$, so $A''=\emptyset$. Thus, $A$ is connected in $\decomp$.
\end{proof}

Thus, if $\Wh(*)$ is connected without cut vertices, then for any edge $e$ in $\tree$ the boundaries of
the two connected components of $\tree\setminus e$ correspond to connected
sets in the decomposition space. Since $\Wh(*)$ is connected
there is also at least one line in $\lines$ crossing $e$. This
means that these two connected sets in the decomposition space have a
point in common.

\begin{corollary}\label{corollary:connected}
Suppose $\Wh(*)$ has no cut vertices. Then the
decomposition space is connected if and only if $\Wh(*)$ is connected.
\end{corollary}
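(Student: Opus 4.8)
The plan is to prove the two implications separately, invoking \fullref{lemma:disconnected} for one direction and \fullref{lemma:basicconnectivity} for the other; the standing hypothesis that $\Wh(*)$ has no cut vertices will only be needed for the converse. For the ``only if'' direction I would argue by contrapositive: if $\Wh(*)$ is disconnected then $\decomp$ is disconnected, which is immediate from \fullref{lemma:disconnected} applied to the given basis. So the real content is the ``if'' direction.

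For the converse, suppose $\Wh(*)$ is connected and has no cut vertices. I would fix an arbitrary edge $e$ of $\tree$ with endpoints $*$ and $v$, and set $\hat{A}=\shadow_\infty^*(v)$, so that its complement in $\bdry\tree$ is $\hat{A}^c=\shadow_\infty^v(*)$, the two ``halves'' of the boundary determined by $e$. By \fullref{lemma:basicconnectivity} (here is where the no-cut-vertex hypothesis enters, through the splicing argument in that lemma), both $A=q(\hat{A})$ and $A'=q(\hat{A}^c)$ are connected subsets of $\decomp$, and since $\hat{A}\cup\hat{A}^c=\bdry\tree$ we get $A\cup A'=\decomp$.

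The remaining step is to check that $A$ and $A'$ overlap. Because $\Wh(*)$ is connected there is at least one line $l\in\lines$ crossing $e$, i.e.\ with one endpoint in $\hat{A}$ and the other in $\hat{A}^c$. The quotient map $q$ identifies these two endpoints to a single good point of $\decomp$, and that point lies in both $A$ and $A'$. Since a union of two connected sets sharing a point is connected, I conclude that $\decomp$ is connected. The heavy lifting has already been done in \fullref{lemma:basicconnectivity}; the only thing to be careful about here is confirming that the crossing line really supplies a common point of the two half-images, which is exactly the observation recorded in the paragraph preceding the statement.
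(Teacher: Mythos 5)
Your proposal is correct and matches the paper's own argument: the ``only if'' direction is the contrapositive of \fullref{lemma:disconnected}, and the ``if'' direction is exactly the paragraph preceding the corollary, applying \fullref{lemma:basicconnectivity} to both halves of $\bdry\tree$ determined by an edge and using a line crossing that edge to furnish the common point of $A$ and $A'$. Nothing is missing; your extra care in noting that $A\cup A'=\decomp$ and that the shared point is the image of the crossing line's two endpoints is precisely what the paper leaves implicit.
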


\begin{proof}[Proof of \fullref{theorem:Bestvina}]
Conditions (1) and (2) are equivalent by definition.
The equivalence of (1) and (3) is
a consequence of Whitehead's
Algorithm. 

(3)$\implies$(4) is \fullref{lemma:disconnected}. 

\fullref{corollary:connected} implies the contrapositive of (4)$\implies$(5).

It is always possible to eliminate cut vertices with Whitehead
automorphisms, so (5)$\implies$(3).
\end{proof}

Here is another corollary of \fullref{lemma:basicconnectivity}:
\begin{corollary}\label{corollary:edgeset}
 Suppose $\Wh(*)$ is connected without cut vertices. Pick any
 edge $e$ in $\tree$. Let $\tilde{S}$ be the collection of (finitely many)
 lines of $\lines$ that cross $e$. Then $S=q_*(\tilde{S})$ is a cut set in $\decomp$.
\end{corollary}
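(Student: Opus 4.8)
The plan is to reuse the two ``halves'' of the boundary already in play in \fullref{lemma:basicconnectivity}. Let $*$ and $v$ be the endpoints of $e$, put $\hat A = \shadow_\infty^*(v)$, and let $\hat A^c = \bdry\tree\setminus\hat A = \shadow_\infty^v(*)$ be the complementary half; both are clopen in $\bdry\tree$. Set $A = q(\hat A)$ and $A' = q(\hat A^c)$. By \fullref{lemma:basicconnectivity}, applied to $e$ from each of its two endpoints, $A$ and $A'$ are each connected in $\decomp$. Moreover $q$ is a continuous surjection from the compact space $\bdry\tree$ to the Hausdorff space $\decomp$, hence a closed map, so $A$ and $A'$ are both closed subsets of $\decomp$. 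Since $\hat A \cup \hat A^c = \bdry\tree$, we have $A \cup A' = \decomp$.

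The key step is to identify $A \cap A'$ with $S$. First I would observe that a point $x \in A \cap A'$ has a preimage point in $\hat A$ and a preimage point in $\hat A^c$; as these halves are disjoint, $x$ cannot be a bad point (whose fiber is a single point), so $q^{-1}(x) = \{l^+, l^-\}$ for some line $l$ with one endpoint in each half, i.e.\ a line crossing $e$, whence $x \in S$. Conversely any line crossing $e$ has exactly one endpoint in each half, so its image lies in both $A$ and $A'$. Thus $A \cap A' = q_*(\tilde S) = S$.

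It then remains to turn this into a separation. Intersecting the closed sets $A$ and $A'$ with the open set $\decomp\setminus S$ gives sets $A\setminus S$ and $A'\setminus S$ that are closed in $\decomp\setminus S$, disjoint (by the intersection computation), and together equal to $\decomp\setminus S$. For nonemptiness I would note that $S$ is finite while $A$ and $A'$ are each uncountable, being images of uncountable clopens under the at-most-two-to-one map $q$; so neither half is contained in $S$. Hence $\decomp\setminus S$ is the union of two disjoint, nonempty, relatively closed subsets and is therefore disconnected, i.e.\ $S$ is a cut set.

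I expect the one genuinely substantive point to be the equality $A\cap A' = S$, which pins down precisely which identifications survive the removal of $S$; everything after that is formal, the separation coming for free from the closedness of $A$ and $A'$, which itself is a consequence of the compactness of $\bdry\tree$. Note that the connectedness supplied by \fullref{lemma:basicconnectivity} is not strictly needed for the bare cut-set conclusion---closedness of the two halves and their meeting exactly along $S$ suffice---though it records the useful extra fact that $S$ separates $\decomp$ into these two distinguished pieces.
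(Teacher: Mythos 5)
Your proposal is correct and follows the same route the paper intends: the corollary is left without an explicit proof there, being immediate from \fullref{lemma:basicconnectivity} together with the observation (made in the paragraph preceding the corollary) that the images of the two halves of $\bdry\tree$ cover $\decomp$ and meet exactly along $S$. Your write-up simply supplies the point-set details the paper elides --- closedness of $A$ and $A'$ via compactness, the identification $A\cap A'=S$, and nonemptiness of the two pieces --- all of which are argued correctly.
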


We will call such a set $S$ coming from all the lines crossing an edge
an \emph{edge cut set}.

From now on, unless otherwise noted, we will assume that any Whitehead
graph $\Wh(*)$ is connected without cut vertices. Thus, the
decomposition space is connected.
Our goal is to identify finite minimal cut sets.

We have an easy sufficient condition to see that a set $\tilde{S}=\{l_1,\dots ,
l_k\}\subset\lines$ gives a cut set $S=q_*(\tilde{S})$ of $\decomp$.
\begin{proposition}\label{proposition:interioredges}
  Let $\tilde{S}=\{l_1,\dots, l_k\}$ be a finite collection of lines in $\lines$. Let
  $\X$ be any compact, connected set in $\tree$.
In $\Wh(\X)$, delete the interior of any edge corresponding to one of the lines  $l_i$.
 If the resulting graph $\Wh(\X)\setminus \tilde{S}$ is disconnected then
  $S=q_*(\tilde{S})$ is a cut set.
\end{proposition}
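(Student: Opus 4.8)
The plan is to convert the combinatorial disconnection of $\Wh(\X)\setminus\tilde{S}$ into an honest topological separation of $\decomp$ after removing the finitely many points of $S$. First I would record the geometric meaning of the vertices of $\Wh(\X)$. Since $\X$ is compact and connected in the locally finite tree $\tree$, it is a finite subtree, so only finitely many edges leave it and $\closure{\tree}\setminus\X$ has finitely many components; these components are exactly the vertices of $\Wh(\X)$. For each component $U$ the set $\hat{U}=U\cap\bdry\tree$ is a nonempty clopen subset of $\bdry\tree$ (nonempty because every component runs off to infinity, as $\tree$ has no leaves), and the finitely many $\hat{U}$ partition $\bdry\tree$. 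By the definition of the generalized Whitehead graph, an edge of $\Wh(\X)$ is the datum of a line of $\lines$ whose two endpoints lie in distinct components, so deleting $\tilde{S}$ removes precisely the edges carried by the $k$ lines $l_1,\dots,l_k$.

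Next, assuming $\Wh(\X)\setminus\tilde{S}$ is disconnected, I would fix a partition of its vertex set into nonempty collections $V_1,V_2$ with no surviving edge between them, and set $\hat{A}_j=\bigcup_{U\in V_j}\hat{U}$ for $j\in\{1,2\}$. Then $\hat{A}_1,\hat{A}_2$ are complementary nonempty clopen subsets of $\bdry\tree$. Writing $\hat{S}=q^{-1}(S)$ for the finite set of endpoints of the lines in $\tilde{S}$, the key claim is that no line $l\in\lines\setminus\tilde{S}$ has one endpoint in $\hat{A}_1\setminus\hat{S}$ and the other in $\hat{A}_2\setminus\hat{S}$. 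Indeed, such an $l$ would have endpoints in components belonging to $V_1$ and to $V_2$, hence would contribute an edge of $\Wh(\X)$ crossing the partition; since $l\notin\tilde{S}$ this edge would survive in $\Wh(\X)\setminus\tilde{S}$, contradicting the choice of $V_1,V_2$.

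It then remains to push this down through the quotient map $q\from\bdry\tree\to\decomp$. Because the lines of $\lines$ never share endpoints, $\hat{S}$ is exactly the union of the full fibers over $S$, so it is saturated, and being finite it is closed in the Cantor set $\bdry\tree$; hence $\bdry\tree\setminus\hat{S}$ is a saturated open set and the restriction $q'\from\bdry\tree\setminus\hat{S}\to\decomp\setminus S$ is again a quotient map. The sets $\hat{A}_1\setminus\hat{S}$ and $\hat{A}_2\setminus\hat{S}$ are clopen in $\bdry\tree\setminus\hat{S}$, and they are $q'$-saturated: for $x=l^+$ with $l\notin\tilde{S}$ the no-shared-endpoints property keeps the other endpoint out of $\hat{S}$, and the key claim keeps it on the same side. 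Their images $B_1,B_2$ are therefore open in $\decomp\setminus S$; they are disjoint and cover $\decomp\setminus S$ by the claim, and each is nonempty since a nonempty clopen subset of the perfect space $\bdry\tree$ is infinite and so survives deletion of the finite set $\hat{S}$. Thus $\decomp\setminus S=B_1\sqcup B_2$ is disconnected, i.e. $S$ is a cut set.

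The reduction to a boundary bipartition is routine; the step that needs care is the descent to the quotient. The subtlety is that $q(\hat{A}_1)$ and $q(\hat{A}_2)$ are generally neither open nor disjoint in $\decomp$ itself — they can overlap, and precisely along $S$ — so one cannot separate in $\decomp$ directly. The fix is to excise the finite saturated closed set $\hat{S}$ first and verify that the two halves remain saturated for the restricted quotient map; this is exactly the point where the hypothesis that pattern lines have pairwise disjoint endpoints, and the fact that every crossing line outside $\tilde{S}$ reappears as a surviving edge, are both essential.
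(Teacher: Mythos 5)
Your proof is correct, but it takes a different route from the paper. The paper gives no standalone argument for \fullref{proposition:interioredges}: it points out that the proof is ``similar to \fullref{corollary:edgeset}'' and that the statement is a special case of the hull machinery that follows, namely \fullref{lemma:basicconnectivity} and \fullref{lemma:connectedhull}, which establish a \emph{bijection} between components of $\decomp\setminus S$ and components of the Whitehead graph over the convex hull of $q^{-1}(S)$. That machinery depends on the standing assumption that $\Wh(*)$ is connected without cut vertices (it is what makes the shadow sets $q(\bdry\mathcal{A}_i)$ connected), and it buys more than the proposition asserts: both implications, plus the component count used later in \fullref{proposition:twocomponents} and \fullref{lemma:openedges}. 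Your argument instead proves exactly the stated implication by a direct point-set separation: it is the relative version of \fullref{lemma:disconnected}, with the finite saturated set $\hat{S}$ excised before descending through the quotient. This is more elementary --- no connectivity of shadows, no splicing, and no hypothesis on $\Wh(*)$ at all --- and your checks are the right ones: that every line of $\lines\setminus\tilde{S}$ crossing the bipartition would survive as an edge of $\Wh(\X)\setminus\tilde{S}$; that $\hat{S}$ is finite, closed, and saturated because pattern lines never share endpoints, so the restricted map $q'\from\bdry\tree\setminus\hat{S}\to\decomp\setminus S$ is again a quotient map with $\left(\hat{A}_j\setminus\hat{S}\right)$ saturated; and that nonemptiness survives deleting $\hat{S}$ because nonempty clopens in the Cantor set $\bdry\tree$ are infinite. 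Your closing observation is also accurate and is precisely why the naive argument fails: $q(\hat{A}_1)$ and $q(\hat{A}_2)$ overlap exactly along $S$, so one must separate in $\decomp\setminus S$, not in $\decomp$. The only thing your approach does not give is the converse direction, which genuinely fails (the paper's example of the pattern generated by $b$ and $ab\bar{a}\bar{b}$, where a cut point exists but no $\Wh(\X)\setminus\tilde{S}$ disconnects), and which is the reason the paper develops the hull lemmas rather than settling for this separation argument.
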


The proof of this proposition is similar to
\fullref{corollary:edgeset}, but it will also be a special case of the
next proposition. Before moving on, though, let us consider an example
that shows that this proposition does not give a necessary condition for
$S$ to be a cut set.

Consider the
pattern $\lines$ generated by the pair of words $b$ and
$ab\bar{a}\bar{b}$ in $F=\left<a,b\right>$. The Whitehead graph (with
loose ends) for this pattern is shown in
  \fullref{fig:a-abAB}.

  \begin{figure}[h!]
    \centering
    \includegraphics[angle=90,height=\figstandardheight]{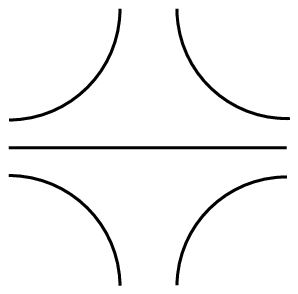}
    \caption{$\Wh(*)\{a,ab\bar{a}\bar{b}\}$ (loose)}
    \label{fig:a-abAB}
  \end{figure}

This graph is connected without cut points, so the decomposition space
is connected. 
We claim that the endpoints of any $b$--line give a cut point in the
decomposition space. For instance, the $b$--line through the identity
vertex has endpoints $b^\infty$, $b^{-\infty}$ in $\bdry \tree$. Let $A=q(b^\infty)=q(b^{-\infty})$.

Let $*$ be
the identity vertex.
Let \[\hat{B}=\cup_{m\in\mathbb{Z}}\shadow^*_\infty(b^ma),\]
that is, $\hat{B}$ consists of all the boundary points $\xi$ of $\tree$ such that the first occurrence of $a$ or $\bar{a}$ in the
geodesic from the identity to $\xi$ is an $a$.

Now $\hat{B}$ is open, and every line of $\lines$ with one
endpoint in $\hat{B}$ has both endpoints in $\hat{B}$.
Let $B=q(\hat{B})$; the preimage is $\hat{B}=q^{-1}(B)$, so $B$ is open in
$\decomp$.
Similarly, let $B'$ be the image in $\decomp$ of the boundary points of $\tree$ such that the first occurrence of $a$ or $\bar{a}$ in the
geodesic from the identity is an $\bar{a}$.

$\decomp=A\cup B\cup B'$, and $A=\closure{B}\setminus
B=\closure{B'}\setminus B'$, so $A$ is a cut point.

For any compact, connected $\X$, $\Wh(\X)$ looks like a
circle with a number of disjoint chords (see
\fullref{fig:whb2-a-abAB}). 

  \begin{figure}[h!]
    \centering
    \includegraphics[angle=90]{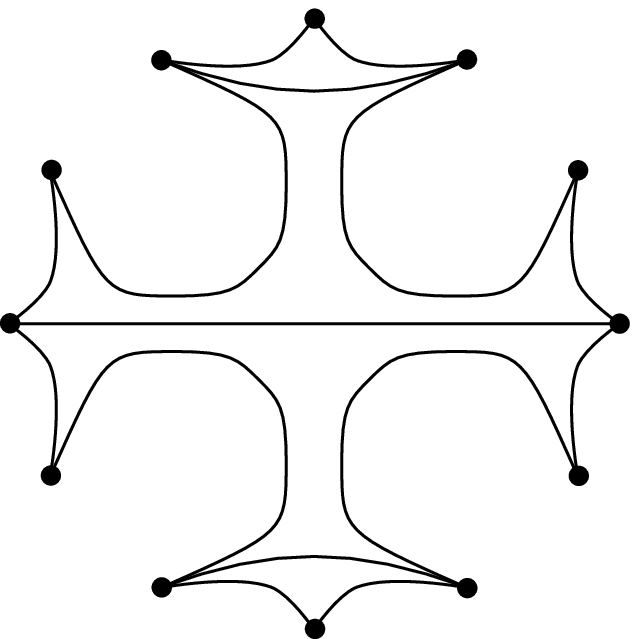}
    \caption{$\Wh(N_1(*))$}
    \label{fig:whb2-a-abAB}
  \end{figure}
The edges of the circle
correspond to $ab\bar{a}\bar{b}$--lines, and the chords correspond to
$b$--lines. This graph has no cut points, so deleting the interior of
an edge does not disconnect it.

This example has shown that to decide if $\tilde{S}$ gives a cut set, it is
not enough to delete the interiors of edges in a Whitehead graph.

There will be several different notions of deleting parts of Whitehead
graphs, so let us standardize notation. Let
$\X\subset\Y\subset\closure{\tree}$, and let $l\in\lines$ and
$\tilde{S}\subset\lines$. Let $e$ be an edge of $\tree$ incident to
exactly one vertex of $\X$.

The edge $e$ corresponds to a vertex in $\Wh(\X)$. The graph
$\Wh(\X)\setminus e$
is obtained from $\Wh(\X)$ by deleting this vertex, but retaining the
incident edges as \emph{loose ends at} $e$.

If $v$ is a vertex of $\tree$ that is distance 1 from $\X$, then there
is a unique edge $e$ with one endpoint equal to $v$ and the other in
$\X$. Define $\Wh(\X)\setminus v=\Wh(\X)\setminus e$.

Similarly, $\Wh(\X)\setminus \Y$ is obtained from $\Wh(\X)$ by deleting each vertex
of $\Wh(\X)$ that corresponds to an edge in $\Y$. 
Visualizing Whitehead graphs in the tree, $\Wh(\X) \setminus \Y$ is the
portion of $\Wh(\Y)$ that passes through the set $\X$. 

$\Wh(\X)\setminus l$ is obtained from $\Wh(\X)$ by deleting the interior
of the edge corresponding to $l$, if such an edge exists.
Similarly, obtain $\Wh(\X)\setminus\tilde{S}$ by deleting the interiors
of any edges corresponding to a line in $\tilde{S}$.

$\Wh(\X)\setminus \ddot{l}$ is obtained from $\Wh(\X)$ by deleting the interior of the edge corresponding to $l$ as well as the two
vertices that are its endpoints, retaining loose ends at these vertices.

Consider the line pattern $\lines$ generated by $ab\bar{a}\bar{b}$ and
$b$. Let $l$ be the $b$--line through the identity vertex $*$. Let
$\X=*$ and let
$\Y=[\bar{b},b]$. Figures~\ref{fig:whbabAB}--\ref{fig:whbabAB-Y} illustrate
our different notions of deleting from $\Wh(\X)$.

\begin{figure}[ht]
\begin{minipage}[b]{0.45\textwidth}
\labellist \small
\pinlabel $a$ [tl] at 164 84
\pinlabel $\bar{a}$ [tr] at 4 84
\pinlabel $b$ [r] at 82 163
\pinlabel $\bar{b}$ [r] at 82 3
\endlabellist
  \centering
  \includegraphics[height=\figstandardheight]{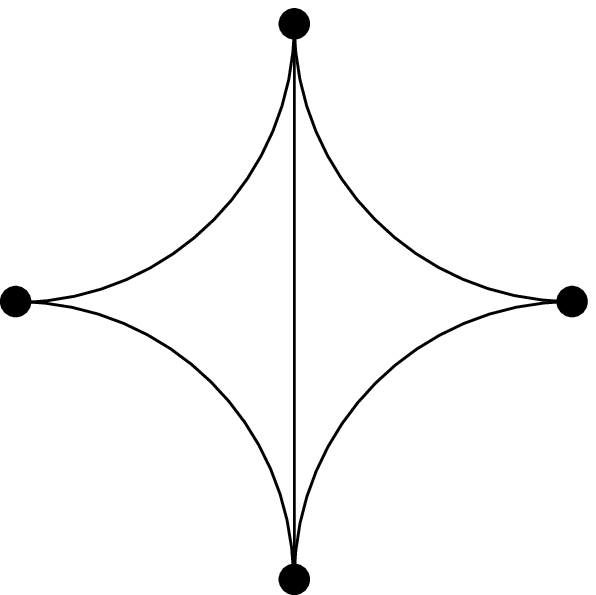}
 \caption{\newline$\Wh(*)$}
\label{fig:whbabAB}
\end{minipage}
\hfill
\begin{minipage}[b]{0.45\textwidth}
\labellist \small
\pinlabel $a$ [tl] at 164 84
\pinlabel $\bar{a}$ [tr] at 4 84
\pinlabel $b$ [r] at 82 163
\pinlabel $\bar{b}$ [r] at 82 3
\endlabellist
  \centering
  \includegraphics[height=\figstandardheight]{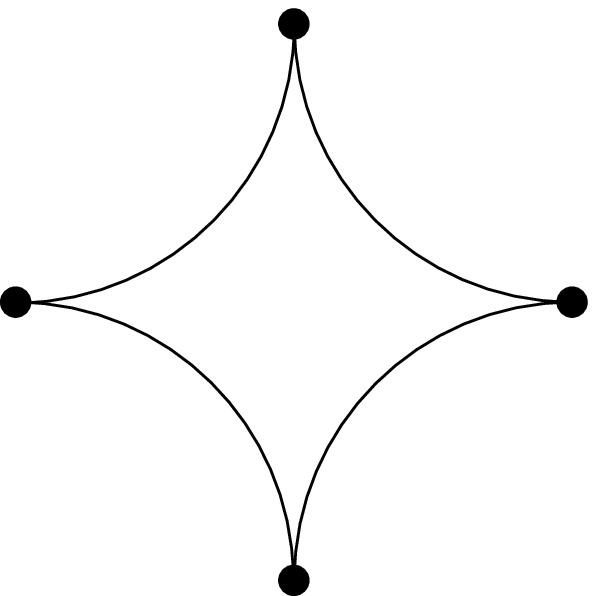}
 \caption{\newline$\Wh(*)\setminus l$}
\end{minipage}
\\
\bigskip
\begin{minipage}[b]{0.45\textwidth}
\labellist \small
\pinlabel $a$ [tl] at 164 80
\pinlabel $\bar{a}$ [tr] at 3 80
\pinlabel $b$ [r] at 79 163
\pinlabel $\bar{b}$ [r] at 78 3
\endlabellist
  \centering
    \includegraphics[height=\figstandardheight]{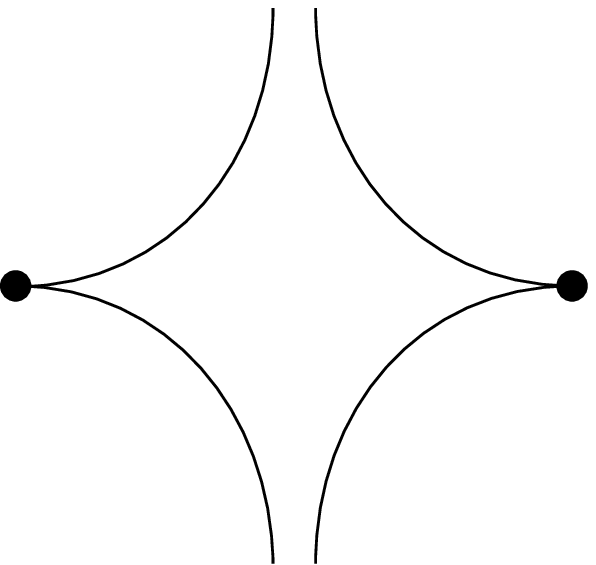}
  \caption{\newline$\Wh(*)\setminus\ddot{l}$}
\end{minipage}
\hfill
\begin{minipage}[b]{0.45\textwidth}
\labellist \small
\pinlabel $a$ [tl] at 164 80
\pinlabel $\bar{a}$ [tr] at 3 80
\pinlabel $b$ [r] at 79 163
\pinlabel $\bar{b}$ [r] at 78 3
\endlabellist
  \centering
    \includegraphics[height=\figstandardheight]{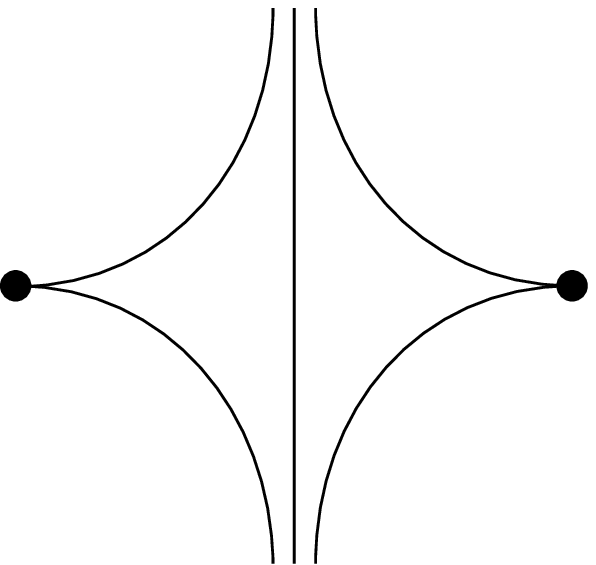}
  \caption{\newline$\Wh(*)\setminus[\bar{b},b]$}
\label{fig:whbabAB-Y}
\end{minipage}
\end{figure}

\begin{lemma}[Hull Determines Connectivity]\label{lemma:connectedhull}
  Let $S$ be a nonempty, finite subset of $\decomp$ that is not just a single bad point. Let $\hull$ be the convex hull of $q^{-1}(S)$. There is a bijection between connected components of $\Wh(\hull)$ and connected components of $\decomp\setminus S$.
\end{lemma}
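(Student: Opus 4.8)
The plan is to establish the bijection by showing that points of $\decomp \setminus S$ can be routed through the finite tree $\hull$ in a way that records which component of $\Wh(\hull)$ they belong to, and that two points lie in the same component of $\decomp \setminus S$ if and only if their associated pieces of $\Wh(\hull)$ lie in the same component of the graph. Recall that $\Wh(\hull)$ is the Whitehead graph built on the finite connected set $\hull$: its vertices are the components of $\closure{\tree} \setminus \hull$, which correspond to the edges of $\tree$ leaving $\hull$, i.e.\ to the ``shadows'' hanging off the boundary of $\hull$, and its edges are the lines of $\lines$ joining distinct such shadows. Since $S$ is finite and not a single bad point, $\hull$ is a genuine finite tree and $q^{-1}(S)$ consists of the (finitely many) endpoints that get collapsed; the key geometric fact is that every line of $\lines$ crossing $\hull$ corresponds to an edge of $\Wh(\hull)$, and the lines of $\tilde S = q_*^{-1}(S)$ (together with any bad points of $\hat S$) are precisely the data we remove.

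**First I would** set up the map in one direction. Each point of $\decomp \setminus S$ has preimage in $\bdry\tree \setminus \hat S$, which is a union of shadows $\shadow_\infty^v(w)$ hanging off vertices of $\hull$; assign to such a point the vertex of $\Wh(\hull)$ (equivalently, the component of $\closure{\tree}\setminus\hull$) in which its preimage lies. This is well defined away from $S$ precisely because the only identifications $q$ makes are along lines of $\lines$, and a line whose two endpoints lie in different shadows of $\hull$ is recorded as an edge of $\Wh(\hull)$ — so two points of $\decomp$ identified by such a line land on the two endpoints of an edge of $\Wh(\hull)$ and hence in the same component of the graph. The sufficiency direction, that points landing in the same component of $\Wh(\hull)$ lie in the same component of $\decomp\setminus S$, is essentially \fullref{lemma:basicconnectivity}: a component of $\Wh(\hull)$ is built by splicing connected pieces, and the union of the corresponding shadows, pushed into $\decomp$, is connected by the same ``$A''$ must be empty'' argument used there, now carried out with the finite cut set $S$ removed rather than a single edge.

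**The main obstacle** will be the necessity direction: showing that two points of $\decomp \setminus S$ sitting in \emph{different} components of $\Wh(\hull)$ really are separated in $\decomp \setminus S$, i.e.\ that the components do not accidentally reconnect once we pass to the quotient. The content is that $S$ genuinely separates these shadow-classes. For this I would argue that each component $K$ of $\Wh(\hull)$ determines a clopen set $\hat K \subset \bdry\tree \setminus \hat S$ given by the union of shadows at the vertices of $K$, that distinct components give disjoint $\hat K$, and that $q(\hat K)$ is open in $\decomp\setminus S$ because no line of $\lines \setminus \tilde S$ joins $\hat K$ to the complement — any such line would be an edge of $\Wh(\hull)$ between two different components, a contradiction. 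Thus the $q(\hat K)$ are disjoint nonempty open sets covering $\decomp\setminus S$, proving they are exactly its connected components and completing the bijection.

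**To summarize the step order:** (1) identify the vertices and edges of $\Wh(\hull)$ with shadows off $\hull$ and with lines of $\lines$ crossing $\hull$, and check that removing $S$ corresponds to removing $\tilde S$ and the bad points of $\hat S$; (2) define the assignment from $\decomp\setminus S$ to components of $\Wh(\hull)$ and verify it is well defined using that $q$ only identifies endpoints of lines, each of which is an edge of the graph; (3) prove each graph-component yields a connected subset of $\decomp\setminus S$ via the splicing/\fullref{lemma:basicconnectivity} argument; and (4) prove distinct graph-components yield disjoint clopen subsets, so the assignment descends to a bijection on connected components.
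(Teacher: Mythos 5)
Your proposal is correct and follows essentially the same route as the paper: the paper likewise applies \fullref{lemma:basicconnectivity} to the components of $\tree\setminus\hull$ hanging off $\hull$, takes the union of the resulting connected images over each component of $\Wh(\hull)$, and observes that these unions are disjoint, open (because no line outside $\tilde{S}$ joins distinct graph components), and mutually complementary, hence clopen and exactly the connected components of $\decomp\setminus S$. One small slip that does not affect your argument: $\hull$ is not a finite tree --- it is an infinite tree with finitely many ends, whose points at infinity are precisely $q^{-1}(S)$.
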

\begin{proof}
Components $\mathcal{A}_i$ of $\tree\setminus \hull$ are the kind of sets in \fullref{lemma:basicconnectivity}. 
Therefore, $q(\bdry \mathcal{A}_i)$ is connected in $\decomp$.
The set $\bdry \mathcal{A}_i$ is open in $\bdry \tree$. 
For a subcollection $\{\mathcal{A}_{i_j}\}$ corresponding to a
connected component of $\Wh(\hull)$, we have that $q(\bigcup_j \bdry \mathcal{A}_{i_j})$ is an open connected set in $\decomp\setminus S$, as in the proof of \fullref{lemma:basicconnectivity}.
The complement of this set in $\decomp\setminus S$ is either empty or
is a union of sets of a similar form, corresponding to other connected
components of $\hull$. Thus, $q(\bigcup_j \bdry A_{i_j})$ is closed, and is therefore a connected component of $\decomp\setminus S$.
\end{proof}

Pick any vertex $*\in \tree$. If $\xi\in \decomp$ is a bad point, the previous argument applies if we take $\hull$ to be the ray $[*,\xi]$. 
If $\Wh(*)$ is connected without cut points then $\Wh(\hull)$ is connected. Therefore: 

\begin{corollary}
 No bad point of $\decomp$ is a cut point.
\end{corollary}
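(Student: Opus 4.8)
The plan is to imitate the proof of \fullref{lemma:connectedhull}, replacing the finite convex hull by a geodesic ray. Since $\xi$ is a bad point, its preimage $q^{-1}(\xi)$ is a single boundary point, which I also call $\hat{\xi}$, and $\hat\xi$ is not an endpoint of any line of $\lines$. Fix a vertex $*$ and set $\hull=[*,\hat\xi]$. The components $\mathcal{A}_i$ of $\tree\setminus\hull$ are the subtrees hanging off the ray; each $\bdry\mathcal{A}_i$ is a shadow, so $q(\bdry\mathcal{A}_i)$ is connected by \fullref{lemma:basicconnectivity}. The sets $\bdry\mathcal{A}_i$ are clopen in $\bdry\tree$, and since $\hat\xi$ is the only boundary point lying in no branch, they cover $\bdry\tree\setminus\{\hat\xi\}$; hence their images cover $\decomp\setminus\{\xi\}$. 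Two branch images meet in $\decomp\setminus\{\xi\}$ exactly when a line of $\lines$ joins the two branches, i.e.\ exactly when the corresponding vertices are adjacent in $\Wh(\hull)$. Because no line has an endpoint at $\hat\xi$, the graph $\Wh(\hull)$ is an honest graph with no non-closed edges, and the bookkeeping of \fullref{lemma:connectedhull} gives a bijection between connected components of $\decomp\setminus\{\xi\}$ and connected components of $\Wh(\hull)$. It therefore suffices to show that $\Wh(\hull)$ is connected.

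The main obstacle is precisely that $\hull$ is infinite, so $\Wh(\hull)$ is \emph{not} literally a finite splice of copies of $\Wh(*)$, and the ``splicing preserves connectivity'' observation used for finite hulls does not apply directly. To get around this I would argue by truncation and contradiction. Suppose the vertex set of $\Wh(\hull)$ admitted a separation into nonempty sets $U$ and $W$ with no edge between them; equivalently, no line of $\lines$ joins a $U$-branch to a $W$-branch, so the branches carry a well-defined two-coloring with all edges monochromatic. Let $v_0=*,v_1,v_2,\dots$ be the vertices along the ray and consider the finite graph $G_n=\Wh([*,v_n])$, which is connected and \textbf{has no cut vertices}, being a splice of finitely many copies of $\Wh(*)$.

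The vertices of $G_n$ are the branches off $v_0,\dots,v_n$ together with one extra vertex $T_n$ representing the tail component of $\tree\setminus[*,v_n]$ beyond $v_n$. Every edge of $G_n$ joining two genuine branches is also an edge of $\Wh(\hull)$, hence monochromatic, while every edge incident to $T_n$ records a line running from a branch off $v_0,\dots,v_n$ out into the tail. Consequently $G_n\setminus T_n$ contains no edge between a $U$-branch and a $W$-branch. If both colors occurred among the branches off $v_0,\dots,v_n$, then $G_n\setminus T_n$ would be disconnected, making $T_n$ a cut vertex of $G_n$ and contradicting that $G_n$ has none. Hence for every $n$ all branches off $v_0,\dots,v_n$ share a single color; letting $n\to\infty$ forces every branch to have that one color, so one of $U,W$ is empty, a contradiction. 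Therefore $\Wh(\hull)$ is connected, so $\decomp\setminus\{\xi\}$ is connected and $\xi$ is not a cut point. The only point requiring care beyond the finite-hull case is this truncation argument identifying $T_n$ as a forbidden cut vertex; everything else reuses machinery already established for \fullref{lemma:connectedhull}.
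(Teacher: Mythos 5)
Your proof is correct and takes essentially the same route as the paper, which likewise applies the argument of \fullref{lemma:connectedhull} with $\hull$ taken to be the ray $[*,\xi]$ and then invokes connectivity of $\Wh(\hull)$. The only difference is that the paper simply asserts that $\Wh(*)$ connected without cut vertices implies $\Wh(\hull)$ is connected, whereas you supply a correct justification of that assertion via your truncation argument, exhibiting the tail vertex $T_n$ as a forbidden cut vertex of the finite spliced graph $\Wh([*,v_n])$.
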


In a sense, \fullref{lemma:connectedhull} achieves our goal of
relating the topology of the decomposition space to generalizations of
the Whitehead graph.
However, this generalized Whitehead graph is infinite. 
In the next two sections we show that the same information can be
obtained from a finite portion of this Whitehead graph.

\subsection{Identifying Cut Points and Cut Pairs}
The previous corollary tells us that any cut point is a good point, so
its preimage in $\bdry \tree$ is a pair of points. 
We have a similar situation if there is a cut pair consisting of two
bad points; the preimage of such a set in $\bdry\tree$ is a pair of
points.
In both cases, the convex hull is a line.

Suppose $g\in F\setminus\{1\}$ is cyclically reduced with
$\hull^+=g^\infty$ and $\hull^-=g^{-\infty}$.
Let $\hull$ be the convex hull of these two points.
Let $\X=[*,g)$ be the segment joining the identity vertex to the $g$
vertex in $\tree$.

We know, by \fullref{lemma:connectedhull}, that the connected
components of $\decomp\setminus q(\{\hull^-,\hull^+\})$ are in bijection
with components of $\Wh(\hull)$. 
We can construct $\Wh(\hull)$ by splicing together $g$-translates of 
$\Wh(\X)\setminus \hull$.

$\Wh(\X)\setminus\hull$ is $\Wh(\X)\setminus\{e,ge\}$ for some edge $e$
incident to $*$ and
$ge$ incident to $g=g*$, so  $\Wh(\X)\setminus\hull$ has a collection of loose ends at $e$
and at $ge$.
The action of $g$ identifies $\Wh(\X)\setminus\hull$ with
$\Wh(g\X)\setminus\hull$, which has loose ends at $ge$ and $g^2e$.
The line pattern determines for us a splicing map for splicing the loose ends of
$\Wh(\X)\setminus\hull$ at $ge$ to the loose ends of
$\Wh(g\X)\setminus\hull$ at $ge$.

It is an easy consequence of the hypothesis that $\Wh(*)$ is connected
without cut vertices that for any segment $[*,g^k)\subset\hull$, every
component of $\Wh([*,g^k))\setminus \hull$ contains a loose end at $e$
(and a loose end at $g^ke$). 
Thus, the number of components of $\Wh(\hull)$ is bounded above by the
number of components of $\Wh(*)\setminus\hull$.
To bound the number of connected components of $\Wh(\hull)$ below we need to
know if distinct connected components of $\Wh(\X)\setminus
\hull$ become connected when we splice on more translates.

Let $P$ be a partition of the loose ends of
$\Wh(\X)\setminus\hull$ at $e$ that is at least as coarse as
connectivity in $\Wh(\X)\setminus\hull$, ie,  if two loose ends belong to the
same connected component of $\Wh(\X)\setminus\hull$ then they belong in
the same subset of the partition.

Let $|P|$ be the number of subsets in the partition;
$P$ is \emph{nontrivial} if $|P|>1$.

Since $P$ is at least as coarse as connectivity, every vertex and edge in $\Wh(\X)\setminus\hull$ is connected to loose
ends in exactly one subset of $P$.
Let $P'$ be the partition of the loose ends of
$\Wh(\X)\setminus\hull$ at $ge$ such that two loose ends are in the
same subset of the partition if and only if they are connected to
loose ends at $e$ in a common subset of the partition $P$.

The $g$ action determines a partition $gP$ of the loose ends
of $\Wh(g\X)\setminus\hull$ at $ge$ by pushing forward the partition $P$.

We say the partition $P$ is \emph{compatible with the
  splicing map} if there is a bijection between subsets of the
partitions of $P'$ and $gP$ and the splicing map splices edges in a
subset of $P'$ to edges in the corresponding subset of $gP$.

The trivial partition is always compatible with the splicing map, but
this gives us no information. Another obvious partition to consider
would be the partition that comes from connectivity in
$\Wh(\X)\setminus\hull$.
The is the partition in which two loose ends of
$\Wh(\X)\setminus\hull$ at $e$ belong to the same subset
of the partition if and only if they belong to the same connected
component of $\Wh(\X)\setminus\hull$.
Suppose this partition is compatible with the splicing map. 
This would mean that two loose ends of $\Wh(\X)\setminus\hull$ at $ge$
in the same connected component of $\Wh(\X)\setminus\hull$ must splice
to two loose ends of $\Wh(g\X)\setminus\hull$ at $ge$ in the same
connected component of $\Wh(g\X)\setminus\hull$, so splicing
introduces no new connectivity.
In this case it follows that for all $k\geq 1$ the number of connected components of
$\Wh([*,g^k))\setminus\hull$ is equal to the number of connected
components of $\Wh(\X)\setminus\hull$.

However, this is not always the case. 
Splicing may introduce new
connectivity.
Compatibility of the partition controls how much new connectivity is introduced.
If we have a partition compatible with the splicing map, then, after
splicing, the partition $P$ is at least as coarse as connectivity in 
$\Wh([*,g^2))\setminus\hull$. 
Moreover, $P$ will still be compatible with the splicing map at
$g^2e$, so we may continue by induction to show:

\begin{proposition}
Suppose $P$ is a partition that is compatible
with the splicing map.
Then for any
segment $\Y=[*,g^k)$ of $\hull$, the number of
connected components of $\Wh(\Y)\setminus\hull$ is greater than or
equal to $|P|$.
\end{proposition}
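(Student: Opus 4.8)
The plan is to induct on $k$, showing that the partition $P$ at $e$ remains at least as coarse as connectivity in $\Wh([*,g^k))\setminus\hull$, so that the $|P|$ blocks of $P$ force at least $|P|$ connected components. Write $W=\Wh(\X)\setminus\hull$; then $\Wh([*,g^k))\setminus\hull$ is assembled by splicing the translates $W,gW,\dots,g^{k-1}W$ along the $g$--translates of the single splicing map $\sigma$ that glues the loose ends of $W$ at $ge$ to the loose ends of $gW$ at $ge$. Throughout I would use the stated fact that every component of $\Wh([*,g^k))\setminus\hull$ carries a loose end at $e$ and a loose end at $g^ke$; in particular each component of $W$ meets both $e$ and $ge$.

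First I would record the bookkeeping that compatibility supplies. Since $P$ is coarser than connectivity, each component of $W$ sends all of its loose ends at $e$ into a single block of $P$, hence has a well-defined $P$--\emph{label}; the loose ends at $ge$ of that component then all lie in the corresponding block of the induced partition $P'$, while $gP$ is the pushforward of $P$ to the loose ends of $gW$ at $ge$. Compatibility of $P$ with the splicing map says exactly that $\sigma$ carries blocks of $P'$ bijectively onto blocks of $gP$; composing this bijection with the canonical identifications among the blocks of $P$, $P'$ and $gP$ produces a single permutation $\pi$ of the blocks of $P$ that describes how the label changes as one crosses the splice from $W$ into $gW$. By $g$--equivariance the splice at each $g^ie$ is the translate $g^{i-1}\sigma$, so the \emph{same} permutation $\pi$ governs every splice.

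The heart of the argument is then a conserved quantity. For a point lying in the translate $g^iW$, whose label is a block $g^iq$ of $g^iP$, define its \emph{reduced label} to be $\pi^{-i}(q)$, a block of $P$. I would verify that the reduced label is constant along every edge path in $\Wh([*,g^k))\setminus\hull$: along a subpath inside a single $g^iW$ the $g^iP$--label is constant because $g^iP$ is coarser than connectivity there, and crossing the splice $g^i\sigma$ from $g^iW$ to $g^{i+1}W$ replaces a block $g^ip$ by $g^{i+1}\pi(p)$, whose reduced label is again $\pi^{-(i+1)}(\pi(p))=\pi^{-i}(p)$. Thus the reduced label is an invariant of path components. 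Two loose ends at $e$ lie in $W=g^0W$, where the reduced label is just the $P$--label, so connected loose ends at $e$ share a block of $P$; this is precisely the assertion that $P$ is at least as coarse as connectivity in $\Wh([*,g^k))\setminus\hull$. Finally, each block of $P$ is nonempty and each of its loose ends lies in some component of reduced label equal to that block, so all $|P|$ blocks are realized; since distinct reduced labels force distinct components, there are at least $|P|$ of them. The base case $k=1$ is immediate, being the hypothesis that $P$ is coarser than connectivity in $W$ together with the fact that every component of $W$ meets $e$.

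The step I expect to be fussiest is the bookkeeping of the second paragraph: making the identifications of the blocks of $P$, $P'$ and $gP$ explicit enough that compatibility genuinely yields a well-defined permutation $\pi$, and confirming, via $g$--equivariance, that it is literally the same $\pi$ at every splice, so that the telescoping $\pi^{-i}$ in the definition of the reduced label is legitimate. Everything else is a direct path-tracing computation.
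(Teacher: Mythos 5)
Your proof is correct and is essentially the paper's own argument: the paper establishes the proposition by the same induction on splices, observing that compatibility keeps $P$ at least as coarse as connectivity in $\Wh([*,g^2))\setminus\hull$ and that $P$ remains compatible with the splicing map at $g^2e$. Your permutation $\pi$ and ``reduced label'' merely make explicit the block-matching bookkeeping that the paper leaves implicit, and your allowance that blocks may be permuted under the compatibility bijection matches the paper's later remark that the $g$--action may permute components.
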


Given a compatible partition, there are two cases to consider. 
If for some $\Y$ we have no free edges in
$\Wh(\Y)\setminus\hull$ then the number of connected components of
$\Wh(\hull)$ is greater than or equal to $|P|$. In
particular, if $|P|\geq 2$, then $q(\{\hull^+,\hull^-\})$ is a bad
cut pair in $\decomp$.

Any particular line $l\in\lines$ overlaps with $\hull$ for distance at
most 
\[|g| \times (2+\text{maximum number of consecutive $g$'s in a
generating word for } \lines.)\]
This is infinite if and only if $g$ is a generator of the line pattern.
Thus, $g$ is a generator of the line pattern if and only if
in every $\Y$ there is a free edge in
$\Wh(\Y)\setminus\hull$, since in this case $\hull=\closure{l}$ where
$l\in\lines$ is the $g$--line through the identity.

In this case we could have chosen the partition $P$ so that one of the
subsets is the singleton consisting of the loose end of the free edge.
The partition $P'$ also has a subset that is a singleton, consisting of the
other loose end of the free edge.
Such a partition has a \emph{segregated free edge}.

We do not see the free edge in $\Wh(\hull)$, so in general we can only
conclude that $\Wh(\hull)$ has at least $|P|-1$ connected components.
If $|P|-1\geq 2$ then $q(\{\hull^+,\hull^-\})=q_*(l)$ is a cut point
in $\decomp$.

\begin{proposition}[$q(\{g^\infty,\,g^{-\infty}\})$ Cut Set Criterion]\label{proposition:gcriterion}
  Let $g\in F\setminus\{1\}$ be an element of the free group.
With notation as above, let $P$ be the finest partition that is compatible
  with the splicing map and at least as coarse as connectivity in $\Wh(\X)\setminus\hull$. Then:
  \begin{enumerate}
  \item If $P$ is trivial then $q(\{g^\infty,\,g^{-\infty}\})$ is not a cut
    set.
\item If $P$ is nontrivial and has no segregated free edge then
$q(\{g^\infty,\,g^{-\infty}\})$ is a bad cut pair.
\item If $P$ has a segregated free edge and $|P|=2$ then $q(\{g^\infty,\,g^{-\infty}\})$ is not a cut
    set.
\item If $P$ has a segregated free edge and $|P|>2$ then
 $q(\{g^\infty,\,g^{-\infty}\})$ is a cut point.
 \end{enumerate}
\end{proposition}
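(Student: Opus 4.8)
The plan is to read everything off the number of connected components of $\Wh(\hull)$. By \fullref{lemma:connectedhull} the components of $\decomp\setminus q(\{\hull^+,\hull^-\})$ are in bijection with those of $\Wh(\hull)$, so $q(\{g^\infty,g^{-\infty}\})$ is a cut set precisely when $\Wh(\hull)$ has at least two components. The lower bound ``$\Wh(\hull)$ has at least $|P|$ components, or at least $|P|-1$ if there is a free edge'' is exactly the content of the discussion preceding the statement: the preceding proposition gives that the number of components of $\Wh(\Y)\setminus\hull$ is at least $|P|$ for every window $\Y$, and compatibility of $P$ prevents distinct blocks from merging as the window grows. This already disposes of the two positive cases. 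In case~(2) there is no free edge, so $\Wh(\hull)$ has at least $|P|\geq 2$ components; in case~(4) the free edge is discarded but at least $|P|-1\geq 2$ components survive.

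The substance of the argument is the matching upper bound needed for cases~(1) and~(3), namely that $\Wh(\hull)$ is in fact connected. I would first note that every component of $\Wh(\hull)$ meets the loose ends at $e$: since every component of every finite window $\Wh(\Y)\setminus\hull$ reaches both of its boundary edges, any vertex of $\Wh(\hull)$ is connected, inside a large enough window, to a loose end at $e$. Consequently the number of components of $\Wh(\hull)$ equals $|Q|$, where $Q$ is the partition of the non-free loose ends at $e$ given by connectivity in $\Wh(\hull)$. Now $Q$ is compatible with the splicing map, by $g$--equivariance of $\Wh(\hull)$, and it is at least as coarse as connectivity in $\Wh(\X)\setminus\hull$ because that graph embeds in $\Wh(\hull)$. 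Since $P$ is the \emph{finest} partition with these two properties, $P$ refines $Q$; hence $|Q|\leq|P|$ when there is no free edge, and $|Q|\leq|P|-1$ when the free edge is segregated (adjoin its loose end to $Q$ as a singleton, which stays compatible because the free edge splices only to itself). In case~(1), $|P|=1$ gives $|Q|\leq 1$; in case~(3), $|P|=2$ with a segregated free edge gives $|Q|\leq 1$. Either way $\Wh(\hull)$ is connected, so by \fullref{lemma:connectedhull} the set is not a cut set.

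It remains to identify the \emph{type} of cut set in the positive cases, which I would read off the overlap estimate preceding the proposition. A segregated free edge is equivalent to some line of $\lines$ meeting $\hull$ in an infinite ray, hence to $\hull=\closure{l}$ for the $g$--line $l\in\lines$; then $q(\{\hull^+,\hull^-\})=q_*(l)$ is a single good point, so in case~(4) it is a cut point. If there is no segregated free edge, no line meets $\hull$ in an infinite ray, so neither $g^\infty$ nor $g^{-\infty}$ is the endpoint of a line of $\lines$; both are bad points, $q(\{\hull^+,\hull^-\})$ is a pair of distinct points, and in case~(2) it is a bad cut pair.

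The main obstacle is the upper bound of the second paragraph: showing that the finest compatible partition $P$ computes the connectivity of the infinite graph $\Wh(\hull)$ exactly, not merely up to inequality. The delicate points are verifying that the connectivity partition $Q$ of $\Wh(\hull)$ is genuinely compatible with the splicing map, so that the finest-ness of $P$ can be brought to bear, and checking that no component of $\Wh(\hull)$ fails to reach the edge $e$.
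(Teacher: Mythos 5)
Your proof is correct, and its outer skeleton matches the paper's: both reduce everything to counting components of $\Wh(\hull)$ via \fullref{lemma:connectedhull}, both obtain cases (2) and (4) from the lower bounds $\#\Wh(\hull)\geq |P|$ (respectively $\geq|P|-1$ when the free edge is discarded) established in the discussion preceding the proposition, and both read off the \emph{type} of cut set from whether a free edge persists, i.e.\ whether $\hull=\closure{l}$ for some $l\in\lines$, so that $q(\{g^\infty,g^{-\infty}\})$ is a single good point rather than a pair of bad points. Where you genuinely diverge is the connectivity step in cases (1) and (3). The paper disposes of these in one line by asserting that triviality of $P$ (respectively $|P|=2$ with a segregated free edge) forces the width-two window $\Wh([*,g^2))\setminus\hull$ to be connected, and stops there. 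You instead prove connectivity of the full infinite graph directly: you form the partition $Q$ of the spliced ends at $e$ given by connectivity in $\Wh(\hull)$ (every component of $\Wh(\hull)$ reaches these ends, because every component of a finite window reaches both of its boundary edges), verify that $Q$ --- and, in the free-edge case, $Q$ with the free end adjoined as a singleton, which stays compatible because the line running along $\hull$ splices only to itself --- is compatible with the splicing map and at least as coarse as window connectivity, and then use finestness of $P$ to conclude $\#\Wh(\hull)=|Q|\leq|P|$ (respectively $\leq|P|-1$). This buys you genuine robustness: connectivity of spliced windows can strictly increase with window length, with components merging only after several splices, so it is not evident that disconnection of the width-two window would by itself produce a nontrivial compatible partition; your limit partition $Q$ is manifestly compatible by $g$-equivariance of $\Wh(\hull)$, so your upper bound justifies precisely the point at which the paper is telegraphic. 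The one item worth a further sentence in a final write-up is that the family of partitions that are compatible and at least as coarse as window connectivity is closed under common refinement (the meet of two compatible partitions is again compatible, since the induced partitions and pushforwards both respect meets), so that a finest element exists and genuinely refines $Q$ --- but the statement of the proposition already presupposes exactly this, so your appeal to finestness is legitimate.
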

\begin{proof}
  If $P$ is trivial then $\Wh([*,g^2))\setminus\hull$ is connected, so $q(\{g^\infty,\,g^{-\infty}\})$ is not a cut
    set.
Similarly, if $|P|=2$ and there is a segregated free edge then
$\hull=\closure{l}$ for $l\in\lines$ and  $\Wh([*,g^2])\setminus l$ is
connected, so $q(\{g^\infty,\,g^{-\infty}\})$ is not a cut
    set.

In the other cases, $\Wh(\hull)$ has multiple components, so $q(\{g^\infty,\,g^{-\infty}\})$ is a cut
    set.
\end{proof}

The proposition tells us that given a $g$ we can decide if
$q(\{g^\infty,\,g^{-\infty}\})$ is a cut set. We call this a
\emph{periodic} cut set.
Next we show that if there are cut points or cut pairs then there are
periodic cut sets:

\begin{proposition}\label{proposition:cutpaircriterion}
If $\decomp$ has cut points or cut pairs then there is some $R$
depending on $\lines$ and some $g$ with $|g|\leq R$ such that
$q(\{g^\infty,\,g^{-\infty}\})$ is a cut set.
\end{proposition}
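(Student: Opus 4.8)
The plan is to manufacture, from any cut point or cut pair, a cut set whose preimage in $\bdry\tree$ is a single pair of ideal points, so that its convex hull is a bi-infinite line; by \fullref{proposition:gcriterion} such a cut set is exactly one of the periodic cut sets $q(\{g^\infty,g^{-\infty}\})$. The bound on $|g|$ then comes from a pigeonhole argument on the finitely many possible splicing configurations along the line. Two of the relevant shapes already have line-shaped hulls: a cut point is a good point $q_*(l)$ with $q^{-1}=\{l^+,l^-\}$, and a cut pair of two bad points has $q^{-1}$ equal to a pair of points as well. The genuinely new work is to reduce a cut pair that \emph{contains} a good point, whose hull is a tripod or an H-shaped tree with three or four ends, to the line-hull situation.

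The pigeonhole engine is the core. Let $\hull$ be a bi-infinite line that separates, so that by \fullref{lemma:connectedhull} the graph $\Wh(\hull)$ has at least two components. Travelling along $\hull$ through consecutive edges, I would record at each edge the partition of the loose ends of the traversed portion induced by connectivity; this partition, together with the finite local data of $\Wh(*)$ at the relevant vertex, is the \emph{state} at that edge. Since the number of loose ends at any edge is at most the maximal valence in $\Wh(*)$, there are only finitely many states, and their number $N$ is determined by $\lines$. Separation forces the tracked partition to be nontrivial, and because a line has no merging of classes at infinity this nontriviality is stable. By pigeonhole the same state recurs at two vertices $u,w$ at distance at most $N$. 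Setting $g=\bar u w$, recurrence of the state makes the induced partition invariant under the $g$--action up to the splicing map, i.e.\ compatible with the splicing map in the sense of \fullref{proposition:gcriterion}; that proposition then shows $q(\{g^\infty,g^{-\infty}\})$ is again a cut set, with $|g|\le N\le R$.

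For a cut point I would avoid the pigeonhole entirely. A cut point is a good point $q_*(l)$, and the homeomorphisms of $\decomp$ induced by the left $F$--action preserve the decomposition into good and bad points and hence preserve cut sets; translating $l$ to a generator line $\{w_i^m\}$ shows $q(\{w_i^\infty,w_i^{-\infty}\})$ is a cut point with $|w_i|\le\max_j|w_j|$. The same equivariance shows that whenever some good point is a cut point, a bounded periodic cut set already exists.

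The main obstacle is a minimal cut pair $\{x,y\}$ with $x=q_*(l)$ good and hull $\hull$ not a line. My plan is to peel this apart using the sub-line $\closure{l}\subset\hull$ and \fullref{proposition:gcriterion} applied to $\closure{l}$: after translating $l$ so that $\closure{l}$ is periodic, either $\Wh(\closure{l})$ is disconnected, in which case $x$ is already a cut point and the previous paragraph finishes, or $\Wh(\closure{l})$ is connected and it is the second point $y$ that produces the separation. In the latter case I would run the state-recurrence argument along each end of the tree $\hull$ simultaneously, periodize $\hull$ to a single line running through two of its ends, and verify that the recurring partitions on all ends remain compatible with the splicing map, so that \fullref{lemma:connectedhull} still records a disconnection of $\decomp$. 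Checking that periodizing a multi-ended hull preserves separation — rather than reconnecting the pieces across the removed extra ends — is the delicate point, and is where the bulk of the care will be required.
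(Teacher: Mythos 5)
Your pigeonhole engine and your treatment of cut points are, in essence, the paper's own proof. The paper proves \fullref{lemma:periodicbadpair} exactly by your recurrence scheme: it numbers the vertices $v_i$ along a two-ended hull $\hull$, records at each $v_i$ a partition of the loose ends of $\Wh(v_i)\setminus\hull$ together with the label pair of the two incident edges of $\hull$, observes that there are finitely many such states (it takes $R=2n(2n-1)m$, with $m$ the number of partitions of complexity-many things into the stable number of classes), and pigeonholes to find $g$ carrying one state to another, so that the partition is compatible with the splicing map and \fullref{proposition:gcriterion} applies; cut points are dispatched, just as you say, by $F$--equivariance and the generators of $\lines$. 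One refinement worth copying from the paper: instead of your evolving ``connectivity of the traversed portion'' (whose classes \emph{do} merge for a while --- your ``no merging of classes at infinity'' is really the statement that the nonincreasing count $f(i)=\#(\Wh([\hull^{-\infty},v_i])\setminus\hull)$ eventually stabilizes), the paper tracks the partition induced by connectivity in the full $\Wh(\hull)$, after discarding an initial segment on which components are still merging or free edges occur; this makes compatibility with the splicing map automatic and feeds directly into \fullref{proposition:gcriterion}, including the free-edge dichotomy there.

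The gap is your fourth paragraph, and you named it yourself. For a cut pair $\{x,y\}$ containing a good point, \fullref{lemma:connectedhull} relates $\decomp\setminus\{x,y\}$ to the Whitehead graph over the hull of the \emph{full} preimage $q^{-1}(\{x,y\})$ --- a tripod or $H$--shaped tree --- and nothing in your sketch shows that the line through two chosen ends still carries a disconnected Whitehead graph. Indeed it need not: passing to the two-ended sub-hull absorbs each discarded branch into a single complementary component $\mathcal{B}_0$, whose boundary now limits on $x$ itself (via the discarded endpoint of the line $l$ with $q_*(l)=x$), so $q(\bdry\mathcal{B}_0)$ is a connected set that can meet both sides of the separation --- precisely the ``reconnecting across the removed ends'' you flag. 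Your proposed remedy, running the recurrence ``along each end simultaneously'' and then periodizing, also does not parse as stated: the recurrences along distinct ends are realized by different group elements, whereas you need a single $g$ translating a single line. Since you explicitly defer this step (``the bulk of the care will be required''), the mixed case is not proven, and it is the case the proposition is really for: its downstream role is \fullref{proposition:pairimpliesbadpair}, reducing arbitrary cut pairs to cut points or bad cut pairs. Note that the paper never periodizes a multi-ended hull at all: \fullref{lemma:periodicbadpair} is formulated for a pair $\{\hull^+,\hull^-\}\subset\bdry\tree$ whose image is a cut pair, and its proof uses only the inequality $2\leq\#\Wh(\hull)$ for that two-ended hull, which is immediate from \fullref{lemma:connectedhull} exactly when the pair is the full preimage (a bad cut pair, or a cut point). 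So your proposal reproduces the paper where the argument is complete, and stalls precisely at the point where an additional argument --- one not supplied in your write-up --- is required.
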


To identify cut points we just need to apply
\fullref{proposition:gcriterion} to the generators of $\lines$, so in
this case it
is sufficient to take $R$ to be  the length of
the longest generator of $\lines$. 
The work of proving \fullref{proposition:cutpaircriterion} lies in
finding an $R$ that works for the cut pair case:

\begin{lemma}\label{lemma:periodicbadpair}
  If $q(\{\hull^+,\hull^-\})$ is a cut pair then there is some $R$
  depending on $\lines$ and some $g\in F\setminus\{1\}$ with $|g|\leq R$ such that
  $q(\{g^\infty,\,g^{-\infty}\})$ is a cut set.
\end{lemma}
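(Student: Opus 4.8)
\emph{Proof proposal.} The plan is to run a pigeonhole (``pumping'') argument along the bi-infinite geodesic $\hull$, exploiting that only boundedly many local connectivity patterns are visible across an edge of $\hull$. Write the vertices of $\hull$ as $\dots,v_{-1},v_0,v_1,\dots$, let $e_i=[v_{i-1},v_i]$ be the edges crossing $\hull$, and let $s_i$ be the generator labelling $e_i$. Because $\hull^+$ and $\hull^-$ are bad, no line of $\lines$ ends at them, so every line crossing $e_i$ joins a component of $\closure{\tree}\setminus\hull$ on the $\hull^-$ side to one on the $\hull^+$ side; these lines are exactly the loose ends of $\Wh(v_i)\setminus\hull$ at $e_i$. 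By \fullref{lemma:connectedhull}, the hypothesis that $q(\{\hull^+,\hull^-\})$ is a cut pair means $\Wh(\hull)$ is disconnected, so fix two of its components $K_1,K_2$.

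The first structural input is that \emph{both} components meet every cross-section. Since no bad point of $\decomp$ is a cut point, neither $q(\hull^+)$ nor $q(\hull^-)$ disconnects $\decomp$ on its own; thus $q(\hull^-)$ is a cut point of the connected space $\decomp\setminus\{q(\hull^+)\}$, and a component adjacent to a cut point accumulates on it. Hence each $K_m$ accumulates on both $q(\hull^+)$ and $q(\hull^-)$; in the tree this says $K_m$ contains components of $\closure{\tree}\setminus\hull$ arbitrarily far toward both ends of $\hull$. A path of lines inside $K_m$ connecting a far-positive branch to a far-negative one is a connected set spanning $\hull$, so it crosses every $e_i$. Therefore, for \emph{every} $i$, the partition $\pi_i$ of the loose ends at $e_i$ by their component in $\Wh(\hull)$ is nontrivial, and moreover no block of $\pi_i$ can be a single line that merely runs alongside $\hull$ (a localized line cannot reach both ends); this last point is what will later exclude the degenerate cases of \fullref{proposition:gcriterion}.

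Now record, for each $i$, the pair $(s_i,\pi_i)$. The number of loose ends at any $e_i$ is at most the maximal valence $V$ of a vertex of $\Wh(*)$, so the number of possible $\pi_i$ is at most the Bell number $B_V$, and the number of pairs is at most $R:=2n\,B_V$, a constant depending only on $\lines$. Among $e_1,\dots,e_{R+1}$ two pairs coincide: there are $i<j\le R+1$ with $s_i=s_j$ and, under the identification of loose ends induced by the translation carrying $e_i$ to $e_j$, $\pi_i=\pi_j$. Let $g$ be the element read along $\hull$ from $v_i$ to $v_j$; since $s_i=s_j$ and $\hull$ is reduced, $g$ is cyclically reduced, its translation length is $j-i\le R$, and after translating so that $v_i=*$ the axis $\hull_g$ of $g$ agrees with $\hull$ on $[*,g]$.

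Finally I transport $\pi_i$ to a partition $P$ of the loose ends of the fundamental domain $\Wh([v_i,v_j))\setminus\hull_g$, which is literally the same graph as $\Wh([v_i,v_j))\setminus\hull$. It is at least as coarse as connectivity within one period, since loose ends joined inside the period lie in a common component of $\Wh(\hull)$. The splicing map at the seam of $\hull_g$ is the local splicing across $e_j$ determined by $s_j=s_i$, identical to the one occurring in $\hull$; together with $\pi_i=\pi_j$ this makes $P$ invariant under the one-period transfer, i.e. compatible with the splicing map. Since $P$ is nontrivial and, by the second paragraph, has no block that is a segregated free edge, \fullref{proposition:gcriterion} places us in its case~(2) or case~(4), so $q(\{g^\infty,g^{-\infty}\})$ is a cut set, as required. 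The main obstacle is exactly this last step: the local data $(s_i,\pi_i)$ only sees connectivity through the single edge $e_i$, whereas disconnection of $\Wh(\hull)$ is global, so the delicate work is to verify that matching the local states genuinely forces the periodic graph $\Wh(\hull_g)$ to remain disconnected and does not collapse into the segregated-free-edge exception. This is precisely where the hypothesis that $\hull^\pm$ are \emph{bad} is essential, since it guarantees the separating components reach both ends and are not single lines.
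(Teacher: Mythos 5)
Your proposal follows the paper's own proof in all essentials: both arguments pigeonhole along $\hull$ on edge labels together with the partitions of the lines crossing each hull edge induced by the components of $\Wh(\hull)$, produce a translation $g$ matching label and partition, transport the matched partition to the fundamental domain of the axis of $g$, and conclude via \fullref{proposition:gcriterion}. Your minor deviations are harmless: recording the single label $s_i$ instead of the paper's label pairs $L_i$ suffices, since $s_i=s_j$ already gives $ge_i=e_j$ and cyclic reducedness (the word along $\hull$ is reduced, so $s_{j-1}\neq\bar{s}_j=\bar{s}_i$), and your accumulation argument that every component of $\Wh(\hull)$ meets every cross-edge makes explicit what the paper uses implicitly when it asserts that each partition $P_i$ has exactly $c=\#\Wh(\hull)$ blocks.

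There is, however, a genuine gap at exactly the step you yourself flag as ``the delicate work'': the exclusion of the degenerate cases of \fullref{proposition:gcriterion}. You argue that ``no block of $\pi_i$ can be a single line that merely runs alongside $\hull$'' because ``a localized line cannot reach both ends.'' This conflates a block of $\pi_i$ with the component of $\Wh(\hull)$ it comes from. A block of $\pi_i$ is the set of lines of one component $K_m$ that cross $e_i$; the component $K_m$ can reach both ends of $\hull$ while crossing $e_i$ through a \emph{single} line $l$, its other lines crossing only hull edges outside the (finite) overlap of $l$ with $\hull$, since connectivity in $\Wh(\hull)$ runs through branch vertices and is not forced through any one cross-section. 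So a singleton block $\{l\}$ is not ruled out by the spanning property. Concretely, if $c=2$, if a line $l$ runs along $\hull$ through the entire matched window $[v_{i-1},v_{j-1}]$, and if $l$ is alone in its component at $e_i$ and at $e_j$, then your transported partition $P$ is precisely the $|P|=2$, segregated-free-edge configuration, which is case (3) of \fullref{proposition:gcriterion} and yields no cut set.

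The case is not fatal to the strategy, but it needs real work rather than the stated inference. Local finiteness of $\lines$ shows that the lines $g^{-m}l$ cannot all span the same window, so the chain of free edges in the spliced periodic graph terminates unless $gl=l$; in the terminating case some segment $\Y$ of the axis has no free edge and the counting argument gives $\#\Wh(\hull_g)\geq |P|\geq 2$, a cut set. What remains is the case $gl=l$, where $g$ is carried by a pattern line whose overlap with $\hull$ contains the whole window; this must be excluded or avoided by a different choice of matched pair. This is the role of the paper's normalization before the pigeonhole (``Since we started with a cut pair, for high enough $i$ there is no free edge\dots\ we may assume that $i=0$ is `high enough'\,''), which has no counterpart in your argument.
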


Note that  $q(\{g^\infty,\,g^{-\infty}\})$ is either a cut point or a
bad cut pair.

\begin{proof}
 Let $\hull$ be the convex hull of $\{\hull^+,\hull^-\}$.
We may assume that $\hull$ contains the identity vertex $*$.

Use $\#$ to denote number of connected components.

Every connected component of $\Wh(*)\setminus\hull$ contains an edge,
so the number of components is at most the complexity of $\Wh(*)$.

For any segment $\X$ of $\hull$ we have:
\[2\leq \#\Wh(\hull)\leq\#\left(\Wh(\X)\setminus\hull\right)\leq\#\left(\Wh(*)\setminus\hull\right)\leq \text{complexity
  of }\Wh(*)\]

Number the vertices of $\hull$ consecutively with integers with $*=v_0$
and index increasing in the $\hull^+$ direction. 
Number the edges of
$\hull$ so that $e_i$ is incident to $v_{i-1}$ and $v_i$.
We consider these edges oriented in the direction of increasing index.
An oriented edge of $\tree$ comes with a label that is a generator or
inverse of a generator of $F$.

The function $f(i)=\#(\Wh([\hull^{-\infty},v_i])\setminus\hull)$ is
nonincreasing and, for high enough $i$, stabilizes at $\#\Wh(\hull)$.
Since we started with a cut pair, for high enough $i$ there is no
free edge in $\Wh([\hull^{-\infty},v_i])\setminus\hull$.
After changing by an isometry and relabeling, if necessary, we may
assume that $i=0$ is ``high enough'' in the previous two statements.

Fix a numbering from 1 to $c= \#\Wh(\hull)$ on the components of $\Wh(\hull)$.
At each $v_i$ we get a partition $P_i$ into $c$ subsets of the loose ends of $\Wh(v_i)\setminus\hull$ at
$e_i$ by connectivity in $\Wh(\hull)$.
Similarly, we get a partition $P_i'$ of the  loose ends of $\Wh(v_i)\setminus\hull$ at
$e_{i+1}$.
These partitions are at
least as coarse as connectivity in $\Wh([v_i,v_j])\setminus\hull$
for any $j\geq i$.

By construction, the splicing map at
$e_{i+1}$ connecting loose ends of $\Wh(v_i)\setminus\hull$ at
$e_{i+1}$ to loose ends of $\Wh(v_{i+1})\setminus\hull$ at $e_{i+1}$ is
compatible with the partitions $P_i'$ and $P_{i+1}$.

For each edge pair $(e_i,e_{i+1})$ there is a corresponding label pair
$L_i$ that gives a nontrivial word of length two in $F$. There are
$2n(2n-1)$ such words.

Let $m$ be the number of partitions of (complexity of $\Wh(*)$)
things into $c$ nonempty subsets.
Consider the segment $[v_0,v_R]$, where $R=2n(2n-1)m$. 
Some label pair appears at least $m$ times. 
Let $\{i_j\}_{j=1}^m$ be a set of indices such that the $L_{i_j}$
are the same.

Let $g_{j,k}$ be the element of $F$ that takes $v_{i_j}$ to
$v_{i_k}$. 

If we fix $P_{i_1}$ we get a map of the elements $g_{1,k}$ into the set
of all possible partitions by $g_{1,k}\mapsto g_{1,k}P_{i_1}$, so for some $1\leq j<k$ we have $g_{1,j}$
and $g_{1.k}$ mapping to the same partition.
Therefore, $g_{j,k}P_{i_j}=P_{i_k}$.

$g=g_{i,j}$ is then the desired element.
\end{proof}

\begin{remark}
  In the preceding proof we found an element $g$ such that the $g$--action preserved a partition. 
We did not insist that the $g$--action also fixed the numbering of
components of $\Wh(\hull)$; these may be permuted. 
The proof may easily be modified to fix the numbering, at the expense
of a larger bound on $|g|$.
\end{remark}

\begin{corollary}\label{proposition:pairimpliesbadpair}
  Existence of a cut pair implies existence of a cut point or bad cut pair.
\end{corollary}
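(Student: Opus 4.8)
The plan is to reduce an arbitrary cut pair to one of the two preferred types by turning its good points into bad points one at a time. First the easy cases. A cut pair consisting of two bad points is already a bad cut pair; and if either point of the cut pair happens to be a cut point, then we have produced a cut point and are done. So I may assume $\{x,y\}$ is a cut pair in which neither point is a cut point and in which $x=q_*(l)$ is a good point, with $l^+,l^-$ the endpoints of the line $l$ and $\ell$ its axis, the bi-infinite geodesic in $\tree$ joining $l^-$ to $l^+$. It suffices to produce either a cut point or a cut pair in which $x$ has been replaced by a bad point, for then an induction on the number of good points (applying the same step to $y$ when $y$ is good) terminates in a bad cut pair.

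The organizing tool is \fullref{lemma:connectedhull}. Let $\hull$ be the convex hull of $q^{-1}(\{x,y\})$; since $\{x,y\}$ is a cut pair, $\Wh(\hull)$ is disconnected, so $\decomp\setminus\{x,y\}$ has components $W_1,\dots,W_k$ with $k\ge 2$. I first localize where these sit: if some $W_j$ had $x\notin\closure{W_j}$, then $\closure{W_j}\subseteq W_j\cup\{y\}$ would make $W_j$ clopen in $\decomp\setminus\{y\}$, exhibiting $y$ as a cut point, which is impossible since neither point of the pair is a cut point (and in any case no bad point is a cut point). The same argument applies to $y$, so every $W_j$ accumulates on both $x$ and $y$. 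Because $x$ is good, a neighborhood of $x$ pulls back to neighborhoods of $l^+$ and of $l^-$, so each $W_j$ approaches $x$ from the $l^+$ end, the $l^-$ end, or both. Removing $x$ alone cuts along $\ell$, and since $x$ is not a cut point, \fullref{lemma:connectedhull} gives that $\Wh(\ell)$ is connected; it is the extra legs of $\hull$ running toward $q^{-1}(y)$ that break this connectivity.

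To replace $x$ by a bad point, the idea is to push an endpoint of $l$ out along its axis and cap it with a deep bad point, approximating the infinite line $\ell$ by a long segment terminating at a bad boundary point (bad points are dense in $\bdry\tree$). Since $\ell$ is the axis of a pattern line it is genuinely periodic, so the Whitehead graph splices periodically far out along $\ell$ and, by the stabilization already exploited in \fullref{lemma:periodicbadpair}, the way the $W_j$ meet each end of $\ell$ settles into a fixed pattern. The replacement exploits this: a component $W$ that does not approach $x$ from, say, the $l^-$ end is separated by structure that lives near the branch vertex and in the $y$ direction, so collapsing the $l^-$ direction (which only identifies vertices of the Whitehead graph, hence can only merge components of $\Wh$, never split them) and pushing the $l^+$ end to a suitably deep bad point $\xi$ should leave the component isolating $W$ intact, yielding a cut pair $\{q(\xi),y\}$ with $x$ replaced by the bad point $q(\xi)$.

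The step I expect to be the main obstacle is precisely that a good point is the image of \emph{two} boundary points, so a single bad point can replace it only when one end of $l$ is inessential to the separation. When every $W_j$ accumulates on $x$ from \emph{both} ends simultaneously, the identification $l^+\sim l^-$ routes connectivity around $x$ through the $y$ direction, and naively capping an end threatens to merge the very components one is keeping apart—both at the capped end and at the collapsed end. The plan for overcoming this is to show that the two-ended case is incompatible with our hypotheses: tracing the separation back through \fullref{lemma:connectedhull} forces the region of $\bdry\tree\setminus\ell$ pointing toward $q^{-1}(y)$ to be a cut vertex of $\Wh(\ell)$ that separates the $l^+$ end from the $l^-$ end, and one then argues that the resulting pocket either fails to accumulate on $x$—so that, by the second paragraph, $y$ is a cut point and we are finished—or localizes to a single end, where the capping of the third paragraph applies without spurious merging because the periodic tail being collapsed lies within a single component. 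Carrying this dichotomy through completes the induction, and with it the corollary.
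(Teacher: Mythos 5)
There is a genuine gap, and it is located exactly where your proof must do its work: the capping step of your third paragraph cannot function as described, and your own second paragraph is what kills it. You correctly show that every component $W_j$ of $\decomp\setminus\{x,y\}$ accumulates on both $x$ and $y$. But now consider the candidate pair $\{q(\xi),y\}$ with $\xi$ a deep bad point: since $x\notin\{q(\xi),y\}$, the point $x$ survives in the complement, and because each $\closure{W_j}$ contains $x$, the set $\{x\}\cup\bigcup_{j\ne j_0}W_j$ together with the part of $W_{j_0}\setminus\{q(\xi)\}$ accumulating at $x$ is connected — all the old components re-glue through $x$ the moment $x$ is no longer deleted. Any component of $\decomp\setminus\{q(\xi),y\}$ other than the one containing $x$ is a connected subset of $\decomp\setminus\{x,y,q(\xi)\}$, hence lies inside a \emph{single} old component $W_{j_0}$, and its closure must meet only $q(\xi)$ and $y$ while avoiding $x$. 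So the surgery can never ``leave the component isolating $W$ intact'': it would have to manufacture a strictly finer pocket inside one old component, clustering only at $q(\xi)$ and $y$, and nothing in your sketch produces such a pocket — a generic deep bad point on the $l^+$ ray need not belong to any cut pair at all. This obstruction is independent of the one-ended/two-ended dichotomy of your fourth paragraph (it defeats the one-ended case too), so the induction never completes its first step. There is the further unaddressed mechanism you half-notice: once $x=q_*(l)$ is not deleted, the identification $l^+\sim l^-$ in $\decomp$ re-routes connectivity from one side of the axis to the other, which is precisely the gluing that deleting $x$ had severed. Indeed, the stronger statement your induction needs — that the partner $y$ can be \emph{kept} while $x$ is traded for a nearby bad point — may simply be false, and the paper never attempts it.

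For comparison, the paper's proof abandons the original pair entirely: the corollary is immediate from \fullref{lemma:periodicbadpair}, whose pigeonhole argument along the hull (finitely many edge-label pairs times finitely many partitions, compatible with the splicing maps in the sense of \fullref{proposition:gcriterion}) produces a short element $g$ whose translation preserves a nontrivial compatible partition, so that $q(\{g^\infty,g^{-\infty}\})$ is a cut set — possibly unrelated to $\{x,y\}$. Such a periodic cut set is automatically of the required form: if $g^\infty$ were an endpoint of a pattern line, then in a free group two elements sharing a fixed point at infinity are coaxial, so $q(g^\infty)=q(g^{-\infty})$ is a single good point and one has a cut point; otherwise both points are bad and one has a bad cut pair. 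The ``stabilization'' you invoke informally in your third paragraph is exactly the compatible-partition machinery of that lemma; if you carried it out rigorously you would have reproved \fullref{lemma:periodicbadpair}, after which the corollary follows with no surgery at all.
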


\begin{corollary}\label{corollary:longsegments}
  With $R$ as in the previous proposition, for any pair of points
  $\{\hull^+,\hull^-\}\subset\bdry\tree$, if $\X$ is a segment of the
  convex hull $\hull$ of $\{\hull^+,\hull^-\}$ of length greater than
  $R$, and if there are no cut pairs in the decomposition space, then one of the
  following is true:
  \begin{enumerate}
\item $\Wh(\X)\setminus\hull$ is connected.
\item $\Wh(\X)\setminus\hull$ has two components, one of which is a free edge.
  \end{enumerate}
\end{corollary}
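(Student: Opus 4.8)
The plan is to argue by contradiction and reuse the pigeonhole machinery of \fullref{lemma:periodicbadpair}: I would show that if $\Wh(\X)\setminus\hull$ is neither connected nor a two–component graph one of whose components is a free edge, then a segment of length greater than $R$ forces a repetition that manufactures a periodic element $g$, $|g|\le R$, for which $q(\{g^\infty,g^{-\infty}\})$ is a cut set. Since that set is either a bad cut pair or a cut point, this contradicts the standing hypothesis that $\decomp$ has no cut points and no cut pairs, and the contradiction leaves exactly the two listed alternatives.

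First I would set up the monotone count. Write $\X=[v_0,v_N]$ with $N>R$ and put $f(j)=\#(\Wh([v_0,v_j])\setminus\hull)$. As recorded before \fullref{lemma:periodicbadpair}, $f$ is non-increasing in $j$ and bounded above by the complexity of $\Wh(*)$; and because $\Wh(*)$ is connected without cut vertices, every component of $\Wh([v_0,v_j])\setminus\hull$ reaches the cut at $v_j$, so the partition $P_j$ of the loose ends there by connectivity has exactly $f(j)$ blocks. In particular $f(j)\ge f(N)$ for all $j$, so if $f(N)\ge 2$ then $f\ge 2$ throughout a window of length exceeding $R$.

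Next I would run the pigeonhole of \fullref{lemma:periodicbadpair} on the $N+1>R=2n(2n-1)m$ cross–sections, $m$ bounding the number of partitions of the (at most complexity-many) loose ends: some consecutive label pair together with some partition type recurs, giving $p<q$ with equal label pairs and with the translation $g$ taking $v_p$ to $v_q$ satisfying $gP_p=P_q$. Equality of partitions forces $f(p)=f(q)$, hence $f$ is constant on $[v_p,v_q]$ and nothing merges there; with the label match this makes $P_p$ a $g$–invariant partition, compatible with the splicing map, on the periodic line $\hull_g$ (the axis of $g$, which agrees with $\hull$ along $[v_p,v_q]$). By the compatibility result preceding \fullref{proposition:gcriterion}, the partition attached to $\hull_g$ by that proposition then has at least $f(p)\ge f(N)$ blocks. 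Now the hypothesis does the work: no cut set is allowed, so we must be in case (1) or case (3) of \fullref{proposition:gcriterion} — the partition is trivial, or it has a segregated free edge with exactly two blocks — since cases (2) and (4) produce a bad cut pair or a cut point. This already yields $f(N)\le f(p)\le 2$. If $f(N)=1$ we are in alternative (1). If $f(N)=2$, then $f\equiv 2$ on $[v_q,v_N]$, so no further merging occurs and the free edge present over $[v_p,v_q]$ cannot meet the other component (which would drop $f$); hence it runs across all of $\X$ and we are in alternative (2).

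The step I expect to be the main obstacle is precisely the bookkeeping in the previous paragraph that transfers periodic data back to the fixed segment $\X$. One must check that matching the partition types at $v_p$ and $v_q$ genuinely produces a $g$–invariant partition compatible with the splicing map, so that \fullref{proposition:gcriterion} applies with block count at least $f(N)$ rather than a smaller number surviving the splicing; this is where the constant-count window $[v_p,v_q]$ is essential, since it guarantees the exhibited partition is no coarser than the persistent disconnection. One must also handle the single free edge with care: that at most one line can stay parallel to $\hull$ across a segment longer than $R$, and that such a free edge, once present, survives to the end of $\X$ exactly when $f$ does not drop. The overlap estimate preceding \fullref{proposition:gcriterion} supplies the uniqueness of the parallel line, and the monotonicity of $f$ supplies its persistence.
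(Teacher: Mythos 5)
Your overall route is the intended one: the paper prints no separate proof of \fullref{corollary:longsegments} precisely because it is meant to be read off as the contrapositive of the pigeonhole argument proving \fullref{lemma:periodicbadpair}, filtered through \fullref{proposition:gcriterion}, and the transfer step you flagged as the main obstacle is in fact handled correctly in your sketch. Constancy of $f$ on the repetition window $[v_p,v_q]$ does give both required properties of $P_p$: if two loose ends in different $P_p$--blocks were joined inside the fundamental domain, the corresponding past components would merge and force $f(q)<f(p)$; and the same no-merging observation identifies the induced partition $P'$ at the exit cross-section with $P_q=gP_p$, so $P_p$ is compatible with the splicing map along the axis $\hull_g$ (which, as you say, agrees with $\hull$ on the window because the label pairs match). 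Hence the cut-set machinery applies with at least $f(p)\geq f(N)$ blocks, and since cases (2) and (4) of \fullref{proposition:gcriterion} would produce a bad cut pair or a cut point, you correctly obtain $f(N)\leq 2$.

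The genuine gap is in the last step, where you upgrade ``$f(N)=2$'' to alternative (2). Your justification --- the free edge ``cannot meet the other component (which would drop $f$); hence it runs across all of $\X$'' --- excludes only one of the two ways the free edge can fail to be a whole component of $\Wh(\X)\setminus\hull$. The line $l$ with $\closure{l}=\hull_g$ is only known to coincide with $\hull$ on (a neighborhood of) the window; outside the window it may leave $\hull$ at a side edge and attach there to material lying in the \emph{same} component that already contains it. Such a reattachment merges nothing, so $f$ stays at $2$; on the $v_0$ side it is invisible to your monotonicity bookkeeping altogether, since $f$ counts components of the growing past graphs and a backward attachment of $l$ occurs before the window is ever reached. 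In that configuration \fullref{proposition:gcriterion} lands in its case (3) (segregated free edge, $|P|=2$), so no cut set is produced and no contradiction arises, yet $\Wh(\X)\setminus\hull$ has two components neither of which is a free edge --- exactly the configuration the corollary asserts cannot occur. What your argument actually proves is: at most two components, and, in the two-component case, some pattern line runs parallel to $\hull$ along the window. Closing the gap needs more: for instance, one can use the finest compatible partition (which segregates the end of $l$) together with the refinement relation against your two-block partition to see that the trace of $l$'s component at the window cross-sections is the singleton $\{l\}$, and then one must still rule out the reattachment of $l$ inside $\X$ by a further appeal to the no-cut-pair hypothesis; neither of these steps appears in your proposal, and the second is not supplied by monotonicity of $f$.
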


\begin{theorem}[Detecting Cut Pairs]\label{theorem:findcutpairs}
There is a finite algorithm for detecting cut pairs in the
decomposition space
\end{theorem}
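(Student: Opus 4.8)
The plan is to reduce deciding whether $\decomp$ has a cut point or a cut pair to a finite search among the \emph{periodic} cut sets $q(\{g^\infty,g^{-\infty}\})$, each of which is already testable by \fullref{proposition:gcriterion}. Recall that detecting connectivity of $\decomp$ is Whitehead's Algorithm, via \fullref{theorem:Bestvina}; so I would first apply Whitehead automorphisms to arrange that $\Wh(*)$ is connected without cut vertices, hence that $\decomp$ is connected. In the inclusive sense of the Main Theorem this leaves exactly the task of detecting cut points and cut pairs, and the standing assumption is then in force.

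The logical core is the equivalence: \emph{$\decomp$ has a cut point or a cut pair if and only if there is some $g\in F\setminus\{1\}$ with $|g|\leq R$ such that $q(\{g^\infty,g^{-\infty}\})$ is a cut set.} The forward direction is precisely what the preceding results assemble: a cut pair forces a cut point or a bad cut pair by \fullref{proposition:pairimpliesbadpair}, and \fullref{proposition:cutpaircriterion} then produces such a $g$ of bounded length (for cut points one may instead apply \fullref{proposition:gcriterion} directly to the generators of $\lines$). The converse is immediate: by the note following \fullref{lemma:periodicbadpair}, any periodic cut set is itself a cut point or a bad cut pair, hence witnesses a cut point or cut pair in $\decomp$. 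The algorithm therefore outputs ``cut pairs present'' exactly when some tested $g$ yields a cut set, and the case analysis of \fullref{proposition:gcriterion} additionally reports whether the witness is a cut point or a bad cut pair.

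It then remains to verify that both the bound $R$ and the test of \fullref{proposition:gcriterion} are effective. The bound $R$ is explicit and computable from the complexity of $\Wh(*)$, as exhibited in the proof of \fullref{lemma:periodicbadpair}, where $R=2n(2n-1)m$ with $m$ a count of partitions of a set whose size is the complexity of $\Wh(*)$; hence only finitely many $g$ with $|g|\leq R$ need be examined. For each such $g$, running \fullref{proposition:gcriterion} requires only finite data: with $\X=[*,g)$ the graph $\Wh(\X)\setminus\hull$ is finite, and the partition it calls for, namely the finest partition of the loose ends at $e$ that is compatible with the splicing map and at least as coarse as connectivity in $\Wh(\X)\setminus\hull$, is obtained by a terminating fixpoint iteration. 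One begins with the connectivity partition of $\Wh([*,g^k))\setminus\hull$ and increases $k$; splicing on further translates can only merge loose ends, so these partitions form a coarsening sequence in the finite lattice of partitions of the loose ends and hence stabilize after finitely many steps to the required partition.

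The genuine difficulty in this circle of ideas has already been dispatched by \fullref{lemma:periodicbadpair}: an arbitrary cut pair has a convex hull that may be any finite tree and is \emph{a priori} not periodic, and it is the pigeonhole argument there, on label pairs and on compatible partitions, that tames it into a periodic cut set of bounded length. Granting that, the only point in the present theorem warranting care is the termination and correctness of the partition fixpoint in the previous paragraph, that is, showing the coarsening iteration halts exactly at the finest compatible partition coarser than connectivity. I expect this to be the main, though modest, obstacle; everything else is bookkeeping assembled from \fullref{proposition:gcriterion}, \fullref{proposition:cutpaircriterion}, and \fullref{proposition:pairimpliesbadpair}.
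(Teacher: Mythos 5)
Your proposal is correct in outline but takes a genuinely different route from the paper's proof. The paper never makes \fullref{proposition:gcriterion} effective for arbitrary $g$: its algorithm invokes that criterion only for the generators of $\lines$ (to find cut points), and detects cut pairs instead by a direct bounded search in the tree, growing sets $\X_0\subset\X_1\subset\cdots$ and retaining a pair of directions $v,v'$ only so long as the finite graphs $\Wh(\X_i\cap[v,v'])\setminus[v,v']$ stay disconnected, out to radius roughly $R/2$; correctness is then exactly \fullref{corollary:longsegments} together with \fullref{proposition:cutpaircriterion}. You instead enumerate all (cyclically reduced) $g$ with $|g|\leq R$ and run \fullref{proposition:gcriterion} on each, which forces you to compute the finest compatible partition --- precisely the computation the paper's segment search is designed to sidestep. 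Both arguments rest on \fullref{lemma:periodicbadpair} for the computable bound $R$, and your two-way reduction (a periodic cut set exists iff a cut point or cut pair exists, via \fullref{proposition:cutpaircriterion} and \fullref{proposition:pairimpliesbadpair}) is sound. The trade-off: the paper's algorithm needs no analysis of partitions at run time, while yours is conceptually cleaner but stands or falls with the fixpoint step.

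On that fixpoint, which you rightly flag as the real obstacle: it is subtler than ``coarsening sequences in a finite lattice stabilize.'' Your iteration is one-sided: the partitions $P_k$ of the loose ends at $e$ come from connectivity in $\Wh([*,g^k))\setminus\hull$, which sees only splicing to the right of $e$. Termination is trivial, and the paper's induction does show that every compatible partition is coarser than each $P_k$, hence coarser than the stabilized partition $P_\infty$; but that proves $P_\infty$ is \emph{finest} only once you know $P_\infty$ is itself \emph{compatible}, and this is not automatic --- in $\Wh(\hull)$ two loose ends at $e$ can be joined by a path running through the translates $g^{-j}\X$, and such ``mergers through the left'' are invisible to your rightward iteration. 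The gap does close, but by an argument absent from your sketch: stability of $P_\infty$ under one more splice shows that the pullback of $gP_\infty$ through the splicing map refines the induced partition $P'_\infty$, while the paper's observation that every component of $\Wh([*,g^k))\setminus\hull$ contains a loose end at $e$ \emph{and} at $g^ke$, together with bijectivity of the splicing map, gives $|P'_\infty|=|gP_\infty|=|P_\infty|$; a refinement between finite partitions with equal class counts is an equality, and that equality is exactly the bijection required by compatibility. (When $g$ generates a line of $\lines$, the free edge stays a singleton class throughout the iteration, so it is automatically segregated, matching the case analysis of \fullref{proposition:gcriterion}.) With that lemma supplied, your algorithm is correct; without it, the tested partition could in principle be strictly finer than any compatible one and the criterion would be misapplied.
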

\begin{proof}
Given a list of words generating a line
pattern, apply Whitehead automorphisms, if necessary, to eliminate cut vertices.
If the graph becomes disconnected, stop; the decomposition space is disconnected by \fullref{corollary:connected}.

If it is possible to disconnect the Whitehead graph by deleting the
interiors of two edges, stop; these two edges correspond to a cut
pair. In particular, this happens if the Whitehead graph has any
valence two vertices.

Use \fullref{proposition:gcriterion} to check if any of the generators
of the line pattern give a cut point in the decomposition space. If
so, stop.

Let $R$ be
the constant from \fullref{lemma:periodicbadpair}.
The idea now is to check segments of length $R$ to see if we can
find a disconnected Whitehead graph. There are a lot of these. We
streamline the process by only checking those long segments for which
every sub-segment gives a disconnected Whitehead graph.

Let $\X_0=\{*\}$. 

We proceed by induction. Suppose $\X_i$ is defined.

Start with $\X_{i+1}=\X_i$.
Consider pairs of points $v$ and $v'$ such that
$d(v,\X_i)=d(v',\X_i)=1$, such that 
$d(v,*)=d(v',*)=i+1$, and such that $*\in[v,v']$.
If $\Wh(\X_i\cap [v,v'])\setminus [v,v']$ is
not connected, add $v$ and $v'$ to $\X_{i+1}$.

Continue until stage $k=1+\lceil \frac{R}{2}\rceil$.
Apply \fullref{corollary:longsegments} and \fullref{proposition:cutpaircriterion}:
there are pairs $v$ and $v'$ in $\X_k\setminus \X_{k-1}$ with
$*\in[v,v']$ such that
$\Wh(\X_{k-1}\cap[v,v'])\setminus [v,v']$ has more than one component
that is not just a free edge if and only if there are cut pairs in the
decomposition space. 

\end{proof}

\begin{corollary}
  If a Whitehead graph for a line pattern has the property
  that deleting any pair of vertices leaves at most one free edge and
  at most one other connected component, then the decomposition space
  has no cut pairs. 
\end{corollary}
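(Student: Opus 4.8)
The plan is to use \fullref{proposition:cutpaircriterion} to reduce the statement to ruling out \emph{periodic} cut sets: it suffices to prove that, under the stated hypothesis, $q(\{g^\infty,g^{-\infty}\})$ is not a cut set for any $g\in F\setminus\{1\}$. Writing $\hull$ for the axis of $g$ and $\X=[*,g)$ for a fundamental domain, I would feed this into \fullref{proposition:gcriterion}. If I can show that $\Wh(\X)\setminus\hull$ has at most two components and that, when it has exactly two, one of them is a free edge, then the finest compatible partition $P$ has $|P|\le 2$, and $|P|=2$ forces $P$ to segregate the free edge. Cases (2) and (4) of \fullref{proposition:gcriterion} become impossible, leaving only cases (1) and (3), in both of which $q(\{g^\infty,g^{-\infty}\})$ is not a cut set.

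First I would reinterpret the hypothesis geometrically. A bi-infinite geodesic $\hull$ through a vertex $v$ meets $v$ in two edges, which correspond to a pair of vertices of $\Wh(v)$; deleting that pair is exactly $\Wh(v)\setminus\hull$, in which a free edge is a line running along $\hull$ through $v$. Since all $\Wh(v)$ are isomorphic by equivariance, the hypothesis says that for every vertex and every geodesic direction, $\Wh(v)\setminus\hull$ has at most one free edge and at most one other component. An immediate consequence I would record is that $\Wh(*)$ has no vertex of valence two: deleting the two neighbors of such a vertex $a$ leaves $a$ isolated alongside the nonempty rest of the graph (there remain at least $2n-3\ge 1$ vertices), giving two components that are not free edges and violating the hypothesis, while a double edge at $a$ is excluded because deleting its two endpoints would leave two free edges. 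Thus every edge of $\tree$ is crossed by at least three lines.

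The heart of the matter is the claim that \emph{for any geodesic $\hull$ and any segment $\X\subseteq\hull$, the graph $\Wh(\X)\setminus\hull$ has at most two components, and if it has exactly two then one is a free edge}, proved by induction on the length of $\X$ via splicing. Splicing a one-vertex piece onto $\Wh([v_0,v_{m-1}])\setminus\hull$ along the internal edge $e_m$, the only dangerous case, and the step I expect to be the main obstacle, is when both pieces carry a free edge. Each free edge uses exactly one of the at least three loose ends at $e_m$, so each bulk component retains at least two such loose ends. If the two free ends splice to one another they merge into a single free edge while the two bulks splice together; if instead each free end splices into the opposite bulk, the leftover bulk loose ends (at least one on each side, again by valence $\ge 3$) splice the two bulks into one component. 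Either way the result has at most two components with the claimed free-edge structure. The valence hypothesis is essential here: with only two lines across $e_m$ the crossing configuration would produce two bulky components, and in fact an edge cut pair.

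Finally I would assemble the pieces. Applying the claim with $\X=[*,g)$ gives $|P|\le\#\big(\Wh(\X)\setminus\hull\big)\le 2$; and when $|P|=2$ the two components are a free edge and a bulk piece, so connectivity of the loose ends at $e$ splits them into the singleton loose end of the free edge and everything else. As $P$ is the finest compatible coarsening of this two-class partition, either $P$ is trivial (case (1)) or $P$ equals it and hence has a segregated free edge (case (3)). In every case $q(\{g^\infty,g^{-\infty}\})$ is not a cut set, so by \fullref{proposition:cutpaircriterion} the decomposition space has no cut points and no cut pairs; together with the standing connectedness assumption this is precisely the statement that $\decomp$ has no cut pairs.
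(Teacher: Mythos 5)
Your proof is correct and follows essentially the same route as the paper, which states this corollary as a consequence of its cut-pair detection machinery (Proposition~\ref{proposition:gcriterion}, Proposition~\ref{proposition:cutpaircriterion}, and the splicing analysis behind Corollary~\ref{corollary:longsegments} and Theorem~\ref{theorem:findcutpairs}). The one thing you add is to make explicit the splicing induction propagating the single-vertex hypothesis to arbitrary segments of $\hull$ (including the valence~$\geq 3$ count that handles the two-free-edges splice case), a step the paper leaves implicit.
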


Unfortunately, this corollary does not apply if a Whitehead graph has
more than one edge between a pair of vertices. Indeed, consider the
pattern in $F_2=\left<a,b\right>$ generated by the word
$a^2ba^2\bar{b}^2$. The Whitehead graph in \fullref{fig:badcomplete} is reduced and contains the
complete graph on the four vertices, but $q(\{a^\infty,a^{-\infty}\})$
is a cut pair, as is evident from \fullref{fig:badcomplete2}.

\begin{figure}[ht]
\begin{minipage}[b]{0.37\textwidth}

  \centering
 \includegraphics[height=\figstandardheight]{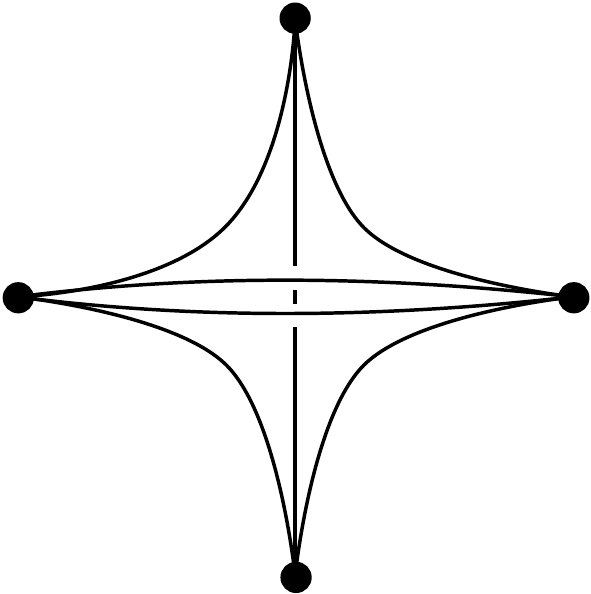}
 \caption{$\Wh(*)\{a^2ba^2\bar{b}^2\}$}
\label{fig:badcomplete}
\end{minipage}
\hfill
\begin{minipage}[b]{0.62\textwidth}

  \centering
  \includegraphics[height=\figstandardheight]{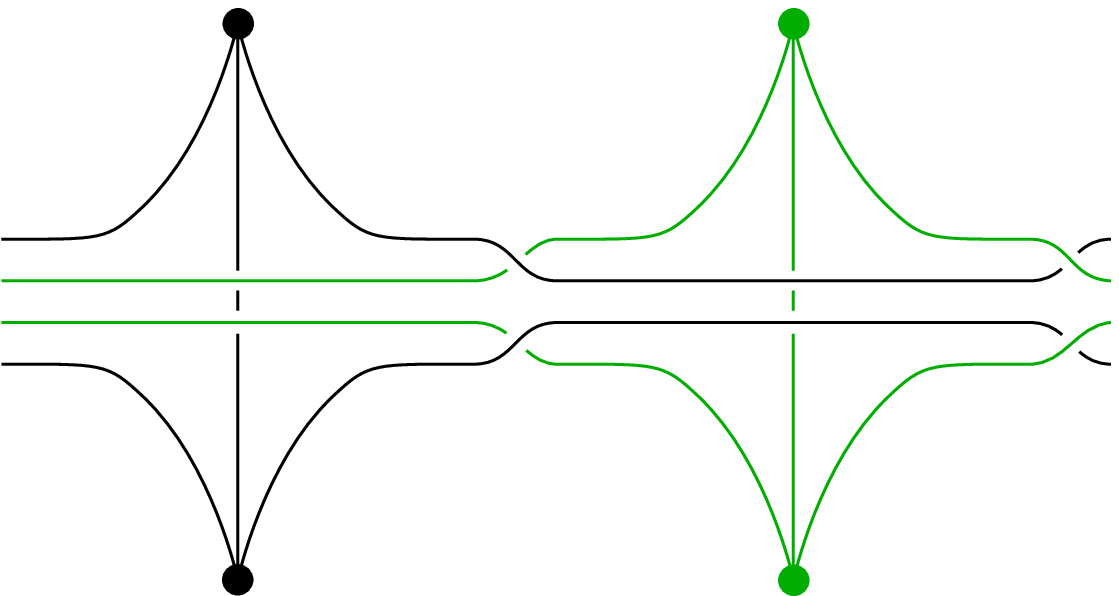}
 \caption{ $\Wh([*,a])\{a^2ba^2\bar{b}^2\}\setminus [a^{-1},a^2]$}
\label{fig:badcomplete2}
\end{minipage}

\end{figure}

\subsection{Cut Sets When There are No Cut Pairs}
Let $S$ be a finite set in the decomposition space, and let $\hull$ be
the convex hull of $q^{-1}(S)$.
\fullref{lemma:connectedhull} tells us that $S$ is a cut set if and
only if $\Wh(\hull)$ is disconnected.
We will pass to a finite subset of $\hull$ whose Whitehead graph
contains the same connectivity information.

Define the \emph{core} $\core$ of $q^{-1}(S)$, to be the smallest closed, connected set such that $\hull\setminus \core$ is a collection of disjoint infinite geodesic rays $\ray_j\from [1,\infty]\to \closure{\tree}$.
We use $\ray_j(0)$ to denote the vertex of the core that is adjacent to
$\ray_j(1)$.

Let $\xi$ be a point in $\bdry\tree$. 
If $q(\xi)$ is either a cut point or a bad point that is a member of a
cut pair, it is not hard to see that there is a geodesic ray $\ray$
with $\ray(\infty)=\xi$ such that
$\Wh(\ray([1,\infty))\setminus\ray(0)$ is not connected.

Conversely, if there exists a geodesic ray
$\ray\from [0,\infty]\to\closure{\tree}$ with $\ray(\infty)=\xi$ such that
$\Wh(\ray([1,\infty))\setminus\ray(0)$ is not connected, then, arguing
as in the proof of \fullref{lemma:periodicbadpair}, $q(\xi)$ is either
a cut point or is a bad point that belongs to a cut pair.

If there are no cut points or cut pairs, then
$\Wh(\ray([1,\infty))\setminus\ray(0)$ is connected for any ray
$\ray$.

\begin{proposition}\label{proposition:nobadpoints}
Suppose $\xi$ is a point in $\bdry{\tree}$ such that $q(\xi)$ is
a bad point that is not a member of  a cut pair. Then $q(\xi)$ is
not a member of any
minimal finite  cut set. In particular, if $\decomp$ has no cut pairs then no
bad point belongs to any minimal finite cut set.
\end{proposition}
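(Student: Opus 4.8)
The plan is to argue by contradiction: suppose $q(\xi)$ is a bad point that is not a member of any cut pair, yet $q(\xi)$ lies in some minimal finite cut set $S$. I would first bound the size of $S$. Since no bad point is a cut point, $S$ cannot equal $\{q(\xi)\}$, so $|S|\geq 2$; and if $|S|=2$ then the two--point cut set $S$ is a cut pair containing the bad point $q(\xi)$, contrary to hypothesis. Hence $|S|\geq 3$, so $S'=S\setminus\{q(\xi)\}$ has at least two points, and both $S$ and $S'$ fall under the hypotheses of \fullref{lemma:connectedhull}.

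The single property of $\xi$ I would use is this: because $q(\xi)$ is bad it is not a cut point, and by assumption it is not a member of a cut pair, so the discussion preceding this proposition (the contrapositive of the ``converse'' statement there) shows that for \emph{every} geodesic ray $\ray$ with $\ray(\infty)=\xi$ the graph $\Wh(\ray([1,\infty)))\setminus\ray(0)$ is connected. Note also that, $\xi$ being bad, no line of $\lines$ has $\xi$ as an endpoint, so this graph is genuinely finite-ended and no edges escape to infinity along the ray.

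Let $\hull$ be the convex hull of $q^{-1}(S)$ and $\hull'$ the convex hull of $q^{-1}(S')=q^{-1}(S)\setminus\{\xi\}$. Since $\xi$ is a single boundary point forming one end of $\hull$, we have $\hull=\hull'\cup\ray$, where $\ray$ is the ray from the nearest-point projection $v=\ray(0)\in\hull'$ out to $\xi$. Enlarging $\hull'$ to $\hull$ subdivides exactly one component of $\closure{\tree}\setminus\hull'$, namely the component $\mathcal{A}_0$ containing $\ray(1)$; every other component of the complement is untouched, and no edge of $\Wh(\hull')$ is destroyed. Consequently $\Wh(\hull)$ is obtained from $\Wh(\hull')$ by blowing up the single vertex $\mathcal{A}_0$ into the induced subgraph on the pieces of $\mathcal{A}_0$ cut off by the ray, and by the geometric description of the Whitehead graph in \fullref{sec:Whandtopology} this induced subgraph is precisely $\Wh(\ray([1,\infty)))\setminus\ray(0)$, its loose ends reattaching to the unchanged vertices of $\Wh(\hull')$. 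Verifying this splicing identity carefully from the definitions is the main technical step of the argument.

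With this in hand the conclusion is quick. Because $\Wh(\ray([1,\infty)))\setminus\ray(0)$ is connected, and blowing up a vertex of a graph by a connected graph alters neither connectivity nor the number of connected components, we get $\#\Wh(\hull)=\#\Wh(\hull')$ (writing $\#$ for the number of connected components). By \fullref{lemma:connectedhull} this yields $\#(\decomp\setminus S)=\#(\decomp\setminus S')$. Since $S$ is a cut set the left side is at least $2$, so $\decomp\setminus S'$ is disconnected and the proper subset $S'\subsetneq S$ is also a cut set, contradicting the minimality of $S$. This proves that $q(\xi)$ lies in no minimal finite cut set. The final assertion is then immediate: if $\decomp$ has no cut pairs, every bad point vacuously satisfies the hypothesis, so no bad point belongs to any minimal finite cut set.
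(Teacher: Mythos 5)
Your proposal is correct and follows essentially the same route as the paper: both use the observation preceding the proposition that $\Wh(\ray([1,\infty)))\setminus\ray(0)$ is connected for every ray to $\xi$, decompose the hull of $q^{-1}(S)$ as the hull of $q^{-1}(S\setminus\{q(\xi)\})$ with that ray spliced on, and invoke \fullref{lemma:connectedhull} to contradict minimality. Your explicit check that $|S|\geq 3$ (so that $S'$ is not a single bad point and the lemma applies) is a small clarification the paper leaves implicit, not a different argument.
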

\begin{proof}
The assumption that $q(\xi)$ is not a member of a cut pair implies that for any ray $\ray\from [0,\infty]\to \closure{\tree}$
with $\ray(\infty)=\xi$, the Whitehead graph $\Wh(\ray([1,\infty]))\setminus \ray(0)$ is
connected.

Let $S$ be a finite cut set in $\decomp$ with $q(\xi)\in S$. 
Let $\hull$ and $\core$ be the hull and core of $q^{-1}(S)$,
respectively. 
Consider the ray $\ray$ that is the
component of $\hull\setminus \core$ containing $\xi$.

Components of $\decomp\setminus S$ are in bijection with components of
$\Wh(\hull)$, which, in turn, are in bijection with components of
$\Wh(\hull\setminus \ray([1,\infty]))$,
since $\Wh(\ray([1,\infty]))\setminus\ray(0)$ is connected.
This is just the hull of
$q^{-1}(S\setminus\{q(\xi)\})$.

Thus, $S\setminus\{q(\xi)\}$ is still a cut set, so $S$ was not minimal. 
\end{proof}

For a finite collection of lines $\tilde{S}=\{l_1,\dots, l_k\}\subset\lines$, the core is a finite tree. 
The convex hull minus the core is a collection of $2k$ disjoint rays:
\[\{\ray_i^\e\from [1,\infty]\to \tree\mid\lim_{t\to
  \infty}\ray_i^\e(t)=l_i^\e,\,\e\in\{+,-\},\,i=1\dots k\}\] 

\begin{lemma}\label{lemma:openedges}
Let $S$ be a finite set of good points of $\decomp$, none of which is
a cut point. Components of $\decomp\setminus S$ are in bijection with components of
$\Wh(\core)\setminus \tilde{S}$
\end{lemma}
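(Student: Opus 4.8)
The plan is to factor the desired bijection through $\Wh(\hull)$, using \fullref{lemma:connectedhull} to handle the infinite hull and then trading the hull for the core by splicing off the boundary rays. Since $S$ consists of good points it is in particular not a single bad point, so \fullref{lemma:connectedhull} applies and the components of $\decomp\setminus S$ are in bijection with the components of $\Wh(\hull)$, where $\hull$ is the convex hull of $q^{-1}(S)$. It therefore suffices to produce a bijection between the components of $\Wh(\hull)$ and those of $\Wh(\core)\setminus\tilde{S}$.

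Recall that $\hull\setminus\core$ is a disjoint union of $2k$ rays $\ray_i^\e([1,\infty))$, one running out to each endpoint $l_i^\e$ of each line $l_i\in\tilde{S}$. Because the rays are disjoint and $\core$ is minimal, each leaves $\core$ across a distinct edge and so points into a distinct component $u_i^\e$ of $\closure{\tree}\setminus\core$; these are $2k$ distinct vertices of $\Wh(\core)$. The key geometric observation I would use is that $\ray_i^\e$ is a terminal subray of $\closure{l_i}$, so in $\Wh(\ray_i^\e([1,\infty)))$ the line $l_i$ runs along the ray and has its other endpoint $l_i^{-\e}$ off in the complement; thus $l_i$ is a non-closed edge there and is discarded. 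Since $q(l_i^\e)=q_*(l_i)\in S$ is a good point (hence not a bad point in a cut pair) that by hypothesis is not a cut point, the discussion preceding \fullref{proposition:nobadpoints} gives that $\Wh(\ray_i^\e([1,\infty)))\setminus\ray_i^\e(0)$ is connected; call this connected rooted graph $G_i^\e$.

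Next I would identify how $\Wh(\hull)$ is assembled from $\Wh(\core)$. Splicing on the full infinite ray $\ray_i^\e$ sends the far endpoint of the edge $l_i$ of $\Wh(\core)$ off to infinity, so that edge becomes non-closed and is discarded; deleting all $2k$ of these escaping edges from $\Wh(\core)$ is exactly the passage to $\Wh(\core)\setminus\tilde{S}$. What remains of the splicing is to replace each vertex $u_i^\e$ of $\Wh(\core)\setminus\tilde{S}$ by the connected graph $G_i^\e$, matching the loose ends of $G_i^\e$ at $\ray_i^\e(0)$ to the edges formerly incident to $u_i^\e$ (the $l_i$-edge having already been removed on both sides, consistently). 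Hence $\Wh(\hull)$ is obtained from $\Wh(\core)\setminus\tilde{S}$ by splicing the connected graph $G_i^\e$ onto the vertex $u_i^\e$, for each of the $2k$ distinct vertices $u_i^\e$.

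Finally I would check that splicing a connected graph onto a single vertex induces a bijection on connected components. Replacing a vertex $u_i^\e$ by a connected $G_i^\e$ cannot split a component, since all edges formerly incident to $u_i^\e$ remain mutually connected through $G_i^\e$; and it cannot merge two components, since the operation is local to the component that contained $u_i^\e$ and introduces no edges to any other component. (In the degenerate case that $u_i^\e$ is isolated in $\Wh(\core)\setminus\tilde{S}$, it is its own component and is simply replaced by the single component $G_i^\e$, so the correspondence persists.) Applying this across the $2k$ disjoint splicings yields a bijection between the components of $\Wh(\hull)$ and those of $\Wh(\core)\setminus\tilde{S}$, and combining with \fullref{lemma:connectedhull} gives the claim. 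The main obstacle I anticipate is the bookkeeping in the third paragraph---verifying precisely that the edges which escape to infinity under ray-splicing are exactly those deleted in $\setminus\tilde{S}$, and that the residual splicing data matches up---rather than the component count, which is routine once the connectedness of each $G_i^\e$ is in hand.
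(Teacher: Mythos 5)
Your proof is correct and follows essentially the same route as the paper's: reduce via \fullref{lemma:connectedhull} to comparing $\Wh(\hull)$ with $\Wh(\core)\setminus\tilde{S}$, note that each $\Wh(\ray_i^\e([1,\infty]))\setminus\ray_i^\e(0)$ is connected because $q_*(l_i)$ is a good point that is not a cut point, and observe that $\Wh(\hull)$ is obtained from $\Wh(\core)\setminus\{\ddot{l}_1,\dots,\ddot{l}_k\}$ by splicing these connected graphs onto the deleted vertices, so deleting those vertices changes nothing about the component count. The only quibble is a harmless miscount in your third paragraph: the lines of $\tilde{S}$ contribute $k$ edges to $\Wh(\core)$ (one per line $l_i$, joining $u_i^+$ to $u_i^-$), not $2k$, though each such edge does escape to infinity at both of its ends under the ray splicings.
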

\begin{proof}
Let $\tilde{S}=\{l_1,\dots, l_k\}$.

For each $i$ and $\e$, since $q_*(l_i)$ is not a cut point, $\Wh(\ray_i^\e([1,\infty]))\setminus \ray_i^\e(0)$ is connected.

$\Wh(\hull)$ is obtained from
$\Wh(\core)\setminus \{\ddot{l}_1,\dots, \ddot{l}_k\}$ by splicing on each
$\Wh(\ray_i^\e([1,\infty]))\setminus \ray_i^\e(0)$.

This means to each deleted vertex of $\Wh(\core)\setminus \{\ddot{l}_1,\dots, \ddot{l}_k\}$ we have spliced on a connected graph, so we might have just
as well not deleted those vertices. 
\end{proof}

In fact, we can use the argument of \fullref{lemma:openedges} to
reduce the convex hull even further.
If $\core$ is not just a vertex, then it has some valence one
vertices, that we call \emph{leaves}. The edge connecting a leaf to
the rest of the core is called the \emph{stem}.

Suppose that for some leaf $v$ of $\core$ every line of the $\tilde{S}$
that goes through $v$ goes through the stem of $v$. From $\Wh(v)$,
delete the interiors of edges corresponding to the $l_i$ and the
vertex corresponding to the stem. 
The resulting graph is connected, so connected components of
$\Wh(\core)\setminus\tilde{S}$ are in bijection with connected
components of $\Wh(\core\setminus\{v\})\setminus\tilde{S}$.

Thus, we may \emph{prune} some leaves off of the core without changing
the connectivity of the Whitehead graphs. 

If $\tilde{S}$ is not an edge cut set then we may prune the core down to a
well defined nonempty tree $\pcore$, the \emph{pruned core}, such that every leaf
contains a line of $\tilde{S}$ that does not go through the stem.

If $\tilde{S}$ is an edge cut set then the core can be pruned down to a pair
of adjacent vertices, both of which look like prunable leaves.
In this case define $\pcore$ to be these two vertices.

\begin{proposition}\label{proposition:edgeset}
 An edge cut set that does not contain a cut point is minimal.
\end{proposition}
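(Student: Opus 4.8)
The plan is to prove minimality directly, by showing that no proper subset of an edge cut set $S$ is itself a cut set. Write $S=q_*(\tilde{S})$, where $\tilde{S}$ is the finite collection of lines crossing a fixed edge $e=[*,v]$ (see \fullref{corollary:edgeset}), and set $\hat{A}=\shadow_\infty^*(v)$, the ``$v$--half'' of $\bdry\tree$, with complement $\hat{A}^c$ the ``$*$--half''. Since each $l\in\tilde{S}$ crosses $e$, it has exactly one endpoint in $\hat{A}$ and one in $\hat{A}^c$, so its image $q_*(l)$ is a point of $\decomp$ lying in both $q(\hat{A})$ and $q(\hat{A}^c)$; conversely, the only points of $\decomp$ having a preimage in each half are the points of $S$.

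First I would pin down the two halves as the components of $\decomp\setminus S$. Because $S$ consists of good points, none of which is a cut point by hypothesis, \fullref{lemma:openedges} applies: the components of $\decomp\setminus S$ correspond to those of $\Wh(\core)\setminus\tilde{S}$, and for an edge cut set the core prunes to the two adjacent vertices of $e$, so $\Wh(\core)\setminus\tilde{S}$ is $\Wh(*)$ with one vertex deleted, disjoint from $\Wh(v)$ with one vertex deleted. As $\Wh(*)$ has no cut vertices, each piece is connected, so $\decomp\setminus S$ has exactly two components. Setting $A_1=q(\hat{A})\setminus S$ and $A_2=q(\hat{A}^c)\setminus S$, a short computation gives $q^{-1}(A_1)=\hat{A}\setminus\{l^+:l\in\tilde{S}\}$, which is open; thus $A_1$ and (symmetrically) $A_2$ are disjoint nonempty clopen sets partitioning $\decomp\setminus S$. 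Since there are exactly two components, $A_1$ and $A_2$ are precisely these, hence each is connected.

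Now let $S'\subsetneq S$ be a proper subset and pick $p_0=q_*(l_0)\in S\setminus S'$. As a set, $\decomp\setminus S'=A_1\cup A_2\cup(S\setminus S')$. The key step is a \emph{bridge} claim: every retained point $p=q_*(l)\in S\setminus S'$ lies in $\closure{A_1}\cap\closure{A_2}$. Indeed, writing $l^+\in\hat{A}$, any neighborhood $U$ of $p$ pulls back to an open set containing $l^+$; since $\bdry\tree$ is perfect and $\tilde{S}$ is finite, $q^{-1}(U)\cap\hat{A}$ contains a point $\eta$ that is not an endpoint of any line of $\tilde{S}$, whence $q(\eta)\in A_1\cap U$. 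Thus $p\in\closure{A_1}$, and symmetrically $p\in\closure{A_2}$ using $l^-\in\hat{A}^c$. Attaching $p_0$ to the disjoint connected sets $A_1,A_2$ therefore yields a connected set $A_1\cup\{p_0\}\cup A_2$, and each further point of $S\setminus S'$ is a limit point of $A_1$, so adjoining it preserves connectedness. Hence $\decomp\setminus S'$ is connected and $S'$ is not a cut set, which shows $S$ is minimal.

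I expect the main obstacle to be the second paragraph: isolating the two halves as \emph{the} components of $\decomp\setminus S$. This is exactly where the no--cut--point hypothesis and \fullref{lemma:openedges} are essential, since removing the finitely many good points of $S$ from the connected set $q(\hat{A})$ is not obviously harmless and cannot be justified by \fullref{lemma:basicconnectivity} alone. Once the halves are known to be connected, the bridge argument is routine point-set topology, relying only on perfectness of $\bdry\tree$ and finiteness of $\tilde{S}$.
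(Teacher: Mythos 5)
Your proposal is correct and follows essentially the same route as the paper: you identify the two components of $\decomp\setminus S$ via \fullref{lemma:openedges} with the pruned core being the two endpoints of $e$, and then use the fact that each line of $\tilde{S}$ has one endpoint on each side, so that any retained point of $S$ reconnects the two halves. Your ``bridge'' claim simply fleshes out, with explicit closure arguments and for arbitrary proper subsets $S'\subsetneq S$, what the paper's final sentence asserts tersely when a single $l_i$ is omitted.
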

\begin{proof}
  Let $\tilde{S}=\{l_1,\dots, l_k\}$ be the set of lines of
  $\lines$ going through an edge $e$ of $\tree$, so that $S=q_*(\tilde{S})$ is
  an edge cut set.
Let $\pcore$ be the pruned core. 
There are two connected components of $\Wh(\pcore)\setminus\tilde{S}$;
they lie on opposite sides of $e$. 
By \fullref{lemma:openedges} these correspond to two connected
components of $\decomp\setminus S$.

Each of the $l_i$ has one endpoint in
each component, so if any $l_i$ is omitted from
the set the two components will have a point in common.
\end{proof}

\begin{corollary}\label{corollary:topologicalmorality}
  If $\decomp$ has no cut pairs, the good points and bad
  points are topologically distinguished.
\end{corollary}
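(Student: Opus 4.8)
The plan is to separate good points from bad points by a single property that is manifestly invariant under homeomorphisms of $\decomp$, namely: \emph{$x$ lies in some minimal finite cut set of $\decomp$}. Since finiteness of a point set, being a cut set, and minimality of a cut set are all preserved by homeomorphisms, this property is topological, so it suffices to verify that every good point has it while no bad point does. Throughout I use the inclusive reading of the hypothesis fixed in the remark after the Main Theorem: ``$\decomp$ has no cut pairs'' means $\decomp$ is connected and has neither cut points nor cut pairs.

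For the good points, let $q_*(l)$ be an arbitrary good point, with $l\in\lines$. The line $l$ crosses infinitely many edges of $\tree$; fix any such edge $e$. By \fullref{corollary:edgeset} the collection $\tilde S$ of the (finitely many) lines crossing $e$ gives an edge cut set $S=q_*(\tilde S)$, and by construction $q_*(l)\in S$ since $l\in\tilde S$. Because $\decomp$ has no cut points, $S$ contains no cut point, so \fullref{proposition:edgeset} shows that $S$ is minimal. Thus $q_*(l)$ lies in the minimal finite cut set $S$, and so every good point has the property.

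For the bad points, the required statement is exactly the second assertion of \fullref{proposition:nobadpoints}: when $\decomp$ has no cut pairs, no bad point belongs to any minimal finite cut set. Combining the two cases, the property ``lies in some minimal finite cut set'' holds precisely at the good points, so any homeomorphism of $\decomp$ (or between decomposition spaces) must carry good points to good points and bad points to bad points; this is the sense in which the two kinds of points are topologically distinguished.

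The argument is short precisely because the real work has been front-loaded into \fullref{proposition:nobadpoints}, whose proof rests on the connectivity of $\Wh(\ray([1,\infty]))\setminus\ray(0)$ for rays $\ray$ limiting to a bad point that is not a member of a cut pair. The only remaining points require care are bookkeeping: confirming that the candidate property is genuinely a homeomorphism invariant, so that the phrase ``topologically distinguished'' is justified, and checking that the inclusive hypothesis supplies the ``no cut point'' condition needed to invoke \fullref{proposition:edgeset}. The main conceptual obstacle would have been the bad-point direction, but that has already been dispatched, so here it enters only as a citation.
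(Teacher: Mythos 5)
Your proposal is correct and follows exactly the paper's route: the paper's two-sentence proof characterizes good points as those belonging to some minimal finite cut set, relying implicitly on \fullref{proposition:nobadpoints} for bad points and on edge cut sets together with \fullref{proposition:edgeset} for good points, which is precisely what you spell out. Your version merely makes explicit the bookkeeping (choosing an edge crossed by $l$, invoking the no-cut-point hypothesis for minimality, and noting homeomorphism invariance of the defining property) that the paper leaves tacit.
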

\begin{proof}
  Bad points are the points that do not belong to any minimal finite
  cut set. Good points are the points that do.
\end{proof}

\begin{proposition}\label{proposition:twocomponents}
 Let $S$ be a minimal finite cut set, none of whose elements are members of a cut pair.
There are  exactly two connected components of $\decomp\setminus S$.
\end{proposition}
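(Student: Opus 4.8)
The plan is to reduce the statement to a finite graph-theoretic count and then play off minimality one line at a time. First I would record what the hypotheses buy: by \fullref{proposition:nobadpoints} a minimal finite cut set can contain a bad point only if that point lies in a cut pair, and a cut point is automatically a member of a cut pair. Indeed, if $p$ is a cut point with $\decomp\setminus\{p\}=U\sqcup V$ for nonempty open $U,V$, then choosing $x\in U$ (possible, and in fact $U$ is infinite since $\decomp$ is perfect) shows $\decomp\setminus\{p,x\}=(U\setminus\{x\})\sqcup V$ is disconnected, so $\{p,x\}$ is a cut pair. Hence no element of $S$ is a bad point or a cut point, so $S=q_*(\tilde S)$ with $\tilde S=\{l_1,\dots,l_k\}$, $k\geq 1$, satisfying exactly the hypotheses of \fullref{lemma:openedges}.

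By \fullref{lemma:openedges} the components of $\decomp\setminus S$ are in bijection with the components of the finite graph $\Wh(\core)\setminus\tilde S$, where $\core$ is the core of $q^{-1}(S)$. Write $c$ for their number and $D_1,\dots,D_c$ for the components of $\decomp\setminus S$, with $D_m$ corresponding to the component $C_m$ of $\Wh(\core)\setminus\tilde S$. Since $S$ is a cut set, $c\geq 2$. Each $l_i$ corresponds to an edge $f_i$ of $\Wh(\core)$ joining the two complementary components of $\closure{\tree}\setminus\core$ containing the rays $\ray_i^+$ and $\ray_i^-$; the graph $\Wh(\core)$ is connected (it is a splice of connected copies of $\Wh(*)$), and deleting the interiors of $f_1,\dots,f_k$ produces the $C_m$.

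The heart of the argument is to see what happens when a single point is returned to the cut set. Fix $i$ and set $S'=S\setminus\{q_*(l_i)\}$, so that $\decomp\setminus S'=(\decomp\setminus S)\cup\{q_*(l_i)\}$. The only way to approach $q_*(l_i)$ in $\decomp$ is along $l_i^+$ or $l_i^-$, i.e.\ out the two rays $\ray_i^\pm$; consequently $q_*(l_i)$ lies in the closure of exactly those $D_m$ indexed by the graph-components $C_a,C_b$ containing the two endpoints of $f_i$, and in the closure of no others. Thus adjoining $q_*(l_i)$ either merges two distinct components $D_a,D_b$ into one, dropping the count to $c-1$, or touches a single component, leaving the count at $c$. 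Minimality of $S$ says $S'$ is not a cut set, so $\decomp\setminus S'$ is connected; since $c\geq 2$, the second case is impossible, and in the first case $c-1=1$. Either way $c=2$, which is the claim.

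I expect the delicate step to be the closure computation in the third paragraph: verifying that $q_*(l_i)$ accumulates on exactly the two components of $\decomp\setminus S$ corresponding to the endpoints of $f_i$ and on no others. This is geometrically transparent from the shadow-neighborhood description of the topology near $l_i^\pm$ together with the $\core$/$\hull$ correspondence underlying \fullref{lemma:connectedhull} and \fullref{lemma:openedges}, but it is the one place where I would argue carefully rather than cite: taking the small shadows $\shadow_\infty^*(\ray_i^\e(t))$ for large integers $t$ exhibits a neighborhood of $q_*(l_i)$ in $\decomp$ meeting only the two relevant components.
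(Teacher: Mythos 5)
Your proposal is correct and takes essentially the paper's route: reduce via \fullref{proposition:nobadpoints} and \fullref{lemma:openedges} to the finite graph $\Wh(\core)\setminus\tilde{S}$, use the ray/shadow geometry at $l_i^{\pm}$ to see that $q_*(l_i)$ accumulates on at most one component of $\decomp\setminus S$ per endpoint, and then invoke minimality of $S$. The only difference is cosmetic packaging of the minimality step --- you re-add the single point $q_*(l_i)$ and count components of $\decomp\setminus(S\setminus\{q_*(l_i)\})$, whereas the paper argues that each point of $S$ must be a limit point of every component and double-counts endpoints; your explicit remark that a cut point belongs to a cut pair (via perfectness of $\decomp$), needed to satisfy the hypothesis of \fullref{lemma:openedges}, usefully fills in a step the paper leaves implicit.
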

\begin{proof}
By \fullref{proposition:nobadpoints}, $S$ consists of good points.
Let $\tilde{S}=\{l_1,\dots, l_k\}=q_*^{-1}(S)$.
Components of $\decomp\setminus S$ are in bijection with components of
$\Wh(\core)\setminus \tilde{S}$. 
This is a finite graph, so $\decomp\setminus S$ has only finitely many components.

Let $A_1,\dots, A_m$ be a list of the components of $\decomp\setminus S$.

If $q_*(l_i)$ is not a limit point of $A_j$ in $\decomp$ then $A_j$ is still a connected component in $\decomp\setminus (S\setminus q_*(l_i))$.
This contradicts minimality of $S$, so each point of $S$ is a limit point in $\decomp$ of every $A_j$.
 This implies that for each $i$ and $j$, at least one of the points $l_i^+$ and $l_i^-$ is a limit point of $q^{-1}(A_j)$.

Now $\hull\setminus \core$ is a collection of disjoint rays $\ray_i^\e$.
The graph $\Wh(\ray_i^\e([1,\infty]))\setminus \ray_i^\e(0)$ is connected, so  no $l_i^+$ or $l_i^-$ is a limit point of more than one $q^{-1}(A_j)$.

Thus, there are exactly two components $A_1$ and $A_2$ of $\decomp\setminus S$, and each line $l_i$ has one endpoint in $q^{-1}(A_1)$ and the other in $q^{-1}(A_2)$. 
\end{proof}

\begin{corollary}\label{corollary:bothcomponents}
  Let $S$ be a minimal finite cut set that is not an edge cut set,
  none of whose elements are members of a cut pair.
For
  every vertex $v\in \pcore$, the portion of
  $\Wh(\pcore)\setminus\tilde{S}$ at $v$ contains an edge from each component of
    $\Wh(\pcore)\setminus\tilde{S}$.
\end{corollary}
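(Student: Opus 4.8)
The plan is to argue by contradiction, converting the statement into combinatorics of $\Wh(\pcore)$ and then playing the defining property of the pruned core against the two--component structure.

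First I would record what is already known. By \fullref{proposition:nobadpoints} the set $S$ consists of good points, so $\tilde S=\{l_1,\dots,l_k\}=q_*^{-1}(S)$ is defined, and by \fullref{proposition:twocomponents} the space $\decomp\setminus S$ has exactly two components $A_1,A_2$, with each $l_i$ having one endpoint limiting into $A_1$ and the other into $A_2$. By \fullref{lemma:openedges} together with the pruning discussion, these correspond to exactly two components $G_1,G_2$ of $\Wh(\pcore)\setminus\tilde S$. The vertices of $\Wh(\pcore)$ are the complementary regions of $\pcore$ in $\closure{\tree}$, each attached to $\pcore$ along a single edge at a unique vertex; write $\mathcal R_1,\mathcal R_2$ for the regions lying in $G_1,G_2$. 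Since $l_i^+$ and $l_i^-$ limit into opposite components, the region containing $\ray_i^+$ and the region containing $\ray_i^-$ lie one in $\mathcal R_1$ and one in $\mathcal R_2$.

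Next I would make the splicing picture precise. Viewing $\Wh(\pcore)$ as spliced from the local graphs $\Wh(v)$, every edge is a path of turn--segments running along the tree--geodesic between the two regions it joins, and the ``portion at $v$'' is exactly the set of surviving turn--segments lying in $\Wh(v)$, each inheriting the component $G_1$ or $G_2$ of the global edge containing it. The key crossing principle is then: any edge of $G_2$ that joins two regions whose attaching vertices lie in different components of $\pcore\setminus\{v\}$, or that meets a region attached at $v$ itself, must have a turn--segment at $v$, and that segment lies in $G_2$. Now suppose for contradiction that the portion at some $v\in\pcore$ contains no edge of $G_2$. By the crossing principle no region of $\mathcal R_2$ is attached at $v$; and since $G_2$ is connected, a path in $G_2$ could not pass from one side of $v$ to another without producing a $G_2$--segment at $v$, so every region of $\mathcal R_2$ is attached at a vertex of a single component $T_0$ of $\pcore\setminus\{v\}$.

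To close the argument I would use the defining property of $\pcore$: at every leaf $u$ there is a line $l\in\tilde S$ through $u$ that misses the stem, so both directions of $l$ at $u$ lead into regions attached at $u$, and because the two endpoints of $l$ limit into opposite components, one of these regions lies in $\mathcal R_1$ and the other in $\mathcal R_2$. Thus every leaf of $\pcore$ has a region of $\mathcal R_2$ attached to it, forcing every leaf into $T_0$. This is impossible: if $v$ is itself a leaf it does not lie in $T_0$, and if $v$ is interior then $\pcore\setminus\{v\}$ has a component different from $T_0$ whose farthest vertex is a leaf of $\pcore$ outside $T_0$. Hence the portion at $v$ meets $G_2$, and the symmetric argument produces an edge of $G_1$.

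I expect the crossing principle and the confinement of $G_2$ to a single subtree to be the delicate point: one must verify that an edge of $\Wh(\pcore)$ joining regions on opposite sides of $v$ genuinely carries a turn--segment at $v$, so that connectivity of $G_2$ cannot bypass $v$, and one must dispose of the degenerate cases in which a component is a single region--vertex or in which $\pcore$ is a single vertex or edge. These are precisely the places where the bookkeeping between the splicing description and the region--vertex description of $\Wh(\pcore)$ must be reconciled; the rest follows formally from \fullref{proposition:twocomponents} and the construction of the pruned core.
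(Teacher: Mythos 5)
Your proposal is correct and is essentially the paper's own argument run in contrapositive form: the paper shows directly that every leaf, and hence every branch of $\pcore\setminus\{v\}$, must see both components of $\Wh(\pcore)\setminus\tilde{S}$ (else it could have been pruned, using the defining leaf property of $\pcore$ together with the endpoint separation from \fullref{proposition:twocomponents}), and then concludes that each of the two connected components must pass through $v$ --- which is exactly your ``crossing principle'' plus confinement-to-$T_0$ contradiction. Your flagged degenerate case of a single-vertex component is disposed of by noting that an isolated region-vertex would force $\tilde{S}$ to contain an entire edge cut set, contradicting minimality of $S$ together with the hypothesis that $S$ is not an edge cut set.
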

\begin{proof}
If $v$ is a leaf such that the portion of
$\Wh(\pcore)\setminus\tilde{S}$ at $v$ belongs to a single component of
$\Wh(\pcore)\setminus\tilde{S}$ then $v$ should have been pruned
off.

If $v$ is not a leaf, $\pcore\setminus\{v\}$ has at least two components. If the Whitehead
graph over one of those components sees only one component of
$\Wh(\pcore)\setminus\tilde{S}$ then it would have been
possible to prune it off. Thus, every component of
$\pcore\setminus\{v\}$ must see two components of
$\Wh(\pcore)\setminus\tilde{S}$.
There are only two components of
$\Wh(\pcore)\setminus\tilde{S}$, so both must be able to
connect to all components of $\pcore\setminus\{v\}$. In particular,
they must connect through $v$.
\end{proof}

\subsection{Indecomposable Cut Sets}
In this section we will assume that the decomposition space has no cut
pairs.

Our ultimate goal is to construct a cubing quasi-isometric to
a bounded valence tree.
For this purpose, we will need to choose a collection of cut sets in a
such a way that there is a bound on the number of cut sets in the
collection that cross any fixed cut set in the collection.

Cut sets with disjoint pruned cores do not cross, so we could control
crossings if we could control the diameters of the pruned cores of the
cut sets in some collection.

The following example shows that cut sets of a fixed size can have
pruned cores with arbitrarily large diameter. 
We subsequently introduce
the property of \emph{indecomposability} to rule out this kind of bad behavior.

Let $\lines$ be the line pattern in $F=\left<a,b\right>$ generated by
the words $ab\bar{a}\bar{b}$, $a$ and $b$.
The edge cut sets have size three. It can be shown that these are the
only cut sets of size three and there are none smaller, see \fullref{example:complete}.
It is also possible to find minimal cut sets of size four.
Pick any two of the edge cut sets that share a line. The four lines of
the symmetric difference are a minimal cut set.
\fullref{fig:indecomposable} shows the line pattern. The two dashed
lines indicate edge cut sets of size three. The four thickened lines
make up the cut set of size four that is the symmetric difference.
 There is no bound on
the size of the pruned core of such a cut set, nor on the number of
such cut sets that cross each other. 

\begin{figure}[h]
  \centering
  \includegraphics[width=\textwidth]{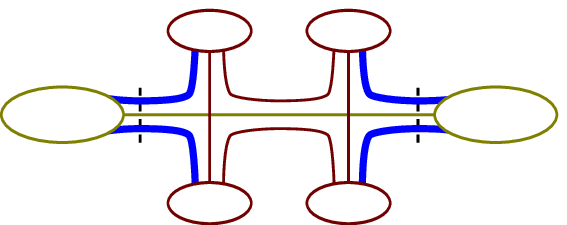}
  \caption{A problematic minimal cut set}
  \label{fig:indecomposable}
\end{figure}

We say that a minimal finite cut set $S\subset\decomp$ is \emph{decomposable} if there are 
minimal cut sets $Q$ and $R$ such that:
\begin{enumerate}
\item $Q$ and $R$ are non-crossing,
\item $|Q|<|S|$ and $|R|<|S|$,
\item $S=Q\Delta R=(Q\setminus R)\cup (R\setminus Q)$
\end{enumerate}

A minimal finite cut set $S$ is \emph{indecomposable} if it is not decomposable.

The smallest cut sets in $\decomp$ are indecomposable since there are no smaller cut sets to decompose them into.

\begin{lemma}\label{proposition:decomposable}
  Suppose $S$ is a finite minimal cut set and the pruned core $\pcore$ of $\tilde{S}$ has an interior vertex $v$ such that $\Wh(v)\setminus \pcore$ has exactly two components, one of which is a free edge, and no lines of $\tilde{S}$ go through $v$. Then $S$ is decomposable.
\end{lemma}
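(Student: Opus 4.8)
The plan is to use the free edge $m$ through $v$ as the single line common to two strictly smaller, non-crossing cut sets whose symmetric difference is $S$. Let $m\in\lines$ be the line giving the free edge at $v$; by hypothesis $m\notin\tilde{S}$. Since $v$ is an interior vertex of $\pcore$, the set $S$ is not an edge cut set, so by \fullref{proposition:twocomponents} (together with the pruned-core reduction of \fullref{lemma:openedges}) the graph $\Wh(\pcore)\setminus\tilde{S}$ has exactly two components, and every line of $\tilde{S}$ has one endpoint-component in one of them and its other endpoint-component in the other. Because $m\notin\tilde{S}$, the good point $q_*(m)$ lies on one side of the cut, so both endpoints of $m$ lie in a common component; I take this to be $G_0$ and call the other $G_1$, and I orient each $l_i\in\tilde{S}$ so that $l_i^+$ lies in $G_0$ and $l_i^-$ in $G_1$.

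The crux will be a structural claim at $v$: the free edge $m$ is the \emph{only} edge of $G_0$ meeting $v$, and deleting it disconnects $G_0$. First, since no line of $\tilde{S}$ passes through $v$, the non-free component $G$ of $\Wh(v)\setminus\pcore$ survives untouched in $\Wh(\pcore)\setminus\tilde{S}$; being connected there, $G$ lies in a single global component. As $S$ is not an edge cut set, \fullref{corollary:bothcomponents} forces both $G_0$ and $G_1$ to appear at $v$, and since $\{m\}$ and $G$ are the only local pieces we must have $m\subseteq G_0$ and $G\subseteq G_1$. Next I would assign to each complementary component $X$ of $\closure{\tree}\setminus\pcore$ the unique edge of $\tree$ at $v$ through which $X$ is seen from $v$; a line is an edge of $\Wh(\pcore)$ passing through $v$ exactly when its two endpoint-components are assigned to distinct edges at $v$. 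The only $G_0$-edge through $v$ is $m$, joining the shadows of its two edges at $v$, so because $v$ is a cut vertex of the tree $\pcore$ any $G_0$-path joining the endpoints of $m$ must switch edge-shadows at $v$ and hence traverse $m$. Thus $m$ is a bridge and $G_0\setminus m$ has exactly two components $G_0^-\ni m^-$ and $G_0^+\ni m^+$. I then split $\tilde{S}=\tilde{S}^-\sqcup\tilde{S}^+$ according to whether $l_i^+$ lies in $G_0^-$ or $G_0^+$, and set $\tilde{Q}=\tilde{S}^-\cup\{m\}$, $\tilde{R}=\tilde{S}^+\cup\{m\}$, $Q=q_*(\tilde{Q})$, $R=q_*(\tilde{R})$.

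It remains to check the three defining properties. For $S=Q\,\Delta\,R$: the line $m$ lies in both $\tilde{Q}$ and $\tilde{R}$ while $\tilde{S}^\pm$ are disjoint, so $\tilde{Q}\,\Delta\,\tilde{R}=\tilde{S}$ and applying $q_*$ gives $Q\,\Delta\,R=S$. To see $Q$ is a cut set, let $\hat{U}$ be the union of the boundary shadows of the (finitely many) complementary components lying in $G_0^-$; then $\hat{U}$ is clopen, and exactly as in \fullref{lemma:disconnected} the lines joining $\hat{U}$ to its complement are precisely the edges of $\Wh(\pcore)$ leaving $G_0^-$, namely $m$ together with $\tilde{S}^-$, i.e.\ $\tilde{Q}$. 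Hence removing $Q$ separates $\decomp$ into the two nonempty pieces $q(\hat{U})\setminus Q$ and $q(\hat{U}^c)\setminus Q$, and each point of $Q$ is a limit of both pieces, so $Q$ is in fact minimal; the same argument on $G_0^+$ handles $R$. Since $\decomp$ has no cut points and no cut pairs, the minimal cut sets $Q$ and $R$ must have size at least three, which forces $|\tilde{S}^-|,|\tilde{S}^+|\ge 2$ and therefore $|Q|,|R|<|S|$. Finally $R\setminus Q=q_*(\tilde{S}^+)$, and each $l_i\in\tilde{S}^+$ has both endpoints in $\hat{U}^c$, so all of $R\setminus Q$ lies in the single component $q(\hat{U}^c)\setminus Q$ of $\decomp\setminus Q$; thus $Q$ and $R$ do not cross.

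The main obstacle is the bridge claim of the second paragraph: the downstream verifications are routine clopen-partition computations of the type already used for \fullref{lemma:disconnected} and \fullref{proposition:edgeset}, but showing that $m$ genuinely separates $G_0$ requires combining the ``exactly two local components'' hypothesis with \fullref{corollary:bothcomponents} to place $G$ entirely inside $G_1$, and then the edge-shadow bookkeeping at the cut vertex $v$ to exclude any alternative $G_0$-route around $m$. I would be especially careful that the complementary components attached to $\pcore$ directly at $v$ (through directions of $\Wh(v)$ unused by $\pcore$) cannot supply such a route; this is precisely where the containment $G\subseteq G_1$ is needed, since those turning lines through $v$ all belong to $G$ and hence to $G_1$, never to $G_0$.
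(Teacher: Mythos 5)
Your proof is correct and takes essentially the same route as the paper's: you form $\tilde{Q}=\{m\}\cup(\text{lines of }\tilde{S}\text{ on one side of }v)$ and $\tilde{R}=\{m\}\cup(\text{the other side})$ (your split by the $G_0$-endpoint's bridge-component coincides with the paper's split by tree side, since no line of $\tilde{S}$ passes through $v$), and you verify cut set, minimality, non-crossing, and the size inequalities $|Q|,|R|<|S|$ from the absence of cut points and cut pairs exactly as the paper does. Your ``bridge claim'' --- placing the non-free local component $G$ inside $G_1$ via \fullref{corollary:bothcomponents} so that $m$ is the only $G_0$-edge through $v$ --- is precisely the paper's assertion that the edge corresponding to $l$ is the only connection between $\X_0$ and $\Y_0$, for which you have supplied the careful justification the paper leaves to its schematic figure.
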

\begin{proof}

  \begin{figure}[h]
\labellist \small
\pinlabel $l$ [t] at 46 17
\pinlabel $v$ [br] at 56 22
\pinlabel $Q$ [b] at 5 18
\pinlabel $R$ [b] at 110 18
\pinlabel $\X_1$ at 16 33
\pinlabel $\X_0$ at 16 7
\pinlabel $\Y_0$ at 99 7
\pinlabel $\Y_1$ at 99 33
\endlabellist
    \centering
    \includegraphics[width=.7\textwidth]{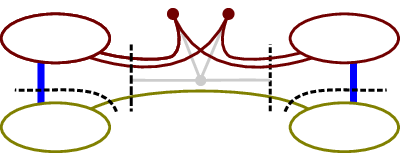}
    \caption{Schematic diagram of decomposable cut set}
    \label{fig:decomposable}
  \end{figure}

Let $l$ be the line of $\lines$ corresponding to the free edge in $\Wh(v)\setminus \pcore$. 
Let $\tilde{Q}$ be $l$ and the lines of $\tilde{S}$ on one side of $\pcore\setminus\{v\}$, and let $\tilde{R}$ be $l$ and the lines of $\tilde{S}$ on the other side of $\pcore\setminus\{v\}$.

Then $Q\cap R=q_*(l)$, and $S=Q\Delta R$.

Let $A^0$ and $A^1$ be the components of $\decomp\setminus S$. 
The line $l$ does not belong to $\tilde{S}$, so we may assume that
$q_*(l)\in A^0$.
Let $\X$ and $\Y$ be the two components of $\pcore\setminus\{v\}$. 
We may assume $Q$ is on the $\X$ side.

Let $\X_0$ be the portion $\X$ corresponding to $A^0$, and define
$\X_1$, $\Y_0$ and $\Y_1$ analogously, see \fullref{fig:decomposable}. 
The edge of $\Wh(\pcore)\setminus\tilde{S}$ corresponding to $l$ is the only connection between $\X_0$ and $\Y_0$, so $\tilde{Q}$ separates $\X_0$ from $\X_1\cup \Y_0\cup \Y_1$.

Thus, $Q$ is a cut set. Moreover, $Q$ is a minimal cut set since every edge corresponding to a
line in $\tilde{Q}$ has one end in $\X_0$ and one end in $\X_1\cup
\Y_0\cup \Y_1$. 

By a similar argument, $R$ is a minimal cut set.

$Q$ and $R$ are non-crossing because the only line of $R$ that has an
endpoint in $\X_0$ is $l=\tilde{Q}\cap\tilde{R}$.

Finally, as there are no cut pairs, we have:
\[3\leq |Q|=|Q\setminus R|+|Q\cap R|=|Q\setminus R|+1\implies |Q\setminus R|\geq2\] 
Thus:
\[|S|=|Q\setminus R|+|R\setminus Q|>1+|R\setminus Q|=|R\cap Q|+|R\setminus Q|=|R|\]
Similarly, $|Q|<|S|$.

\end{proof}

\begin{theorem}[Edge cut sets are indecomposable]\label{theorem:indecomposableedges}
Suppose we have chosen a free basis of $F$ such that $\Wh(*)$ is minimal complexity. Then edge cut sets are indecomposable.
\end{theorem}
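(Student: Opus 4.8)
The plan is to argue by contradiction: assuming an edge cut set $S$ is decomposable, I will produce a Whitehead automorphism that strictly lowers the complexity of $\Wh(*)$, contradicting minimality. Translating by an element of $F$ and relabeling the basis, I may assume $S$ is the edge cut set of the edge $e=[*,a]$ with $a\in\mathcal{B}$. Reading off lines crossing $e$ from their local picture at $*$ shows $|S|=\deg_{\Wh(*)}(a)$, and in fact $S$ is exactly the cut set associated to the partition of $\mathcal{B}\cup\bar{\mathcal{B}}$ into $\{a\}$ and its complement. Suppose then that $S=Q\Delta R$ with $Q,R$ minimal, non-crossing, and $|Q|,|R|<|S|$.

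First I would pin down the combinatorics of the complementary regions. Since there are no cut pairs, \fullref{proposition:twocomponents} gives that each of $Q$, $R$, $S$ has exactly two complementary components, and non-crossing lets me nest the two ``halves'' of $\bdry\tree$ cut out by $Q$ and by $R$ into three regions $U,V,W$, with $Q$ separating $U$ from $V\cup W$ and $R$ separating $W$ from $U\cup V$. The decisive point is to locate these relative to $e$: the lines of $\tilde Q\cap\tilde R$ are not in $S$, hence do not cross $e$, so both of their endpoints lie on the $*$-side, while every line of $\tilde S$ crosses $e$. Chasing this through the nesting identifies $V$ with the $a$-side $\bdry T^a$ of $e$ and $U\cup W$ with the $*$-side $\bdry T^*$, and sorts the lines into three types: $\tilde Q\setminus\tilde R$ runs from $U$ to $V$, $\tilde R\setminus\tilde Q$ from $W$ to $V$, and $\tilde Q\cap\tilde R$ from $U$ to $W$.

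The heart of the argument is the \emph{clean case}, where the splitting of $\bdry T^*$ into $U$ and $W$ is already visible at $*$, i.e.\ each subtree $\shadow^*_\infty(x)$ for $x\neq a$ lies entirely in $U$ or entirely in $W$. Then the directions $D=(\mathcal{B}\cup\bar{\mathcal{B}})\setminus\{a\}$ split as $D_U\sqcup D_W$; I set $Z=\{a\}\cup D_U$, swapping the roles of $Q$ and $R$ if needed so that $\bar a\in D_W$ and $Z$ is admissible for pushing through $a$. Now I count the edges of $\Wh(*)$ crossing $Z\mid Z^c$ by line type: lines of $\tilde Q\setminus\tilde R$ use $a$ and a $D_U$-direction, both in $Z$, so do not cross; lines of $\tilde R\setminus\tilde Q$ use $a\in Z$ and a $D_W$-direction and cross, contributing $|R\setminus Q|$; lines of $\tilde Q\cap\tilde R$ use a $D_U$- and a $D_W$-direction and cross, contributing $|Q\cap R|$; all remaining edges stay on one side. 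Hence the number of crossing edges is $|R\setminus Q|+|Q\cap R|=|R|<|S|=\deg(a)$, so the Whitehead automorphism $\phi_{a,Z}$ lowers complexity, contradicting minimality.

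It remains to remove the cleanliness assumption, and this is where I expect the real difficulty. The separation of $U$ from $W$ fails to be visible at $*$ precisely when some vertex $y$ carries several parallel $\{a,y\}$-edges whose lines run to different regions, which is exactly the multiple-edge phenomenon flagged after \fullref{theorem:findcutpairs}. Using that no line of $\tilde Q\cap\tilde R$ crosses $e$ while the offending lines do, one checks that inside $\shadow^*_\infty(y)$ both $Q$ and $R$ again separate $U\cap\shadow^*_\infty(y)$ from $W\cap\shadow^*_\infty(y)$; since the pruned cores of $Q$ and $R$ are finite, descending along such shared directions must terminate at a vertex $p$ where the separation becomes visible by first directions. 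Because the line pattern is equivariant, $\Wh(p)=\Wh(*)$, so the clean-case count produces an admissible set at $p$ and again a complexity-reducing Whitehead automorphism. Carrying out this descent and re-running the count at $p$ rigorously -- in particular verifying that the relevant valence at $p$ still dominates the number of crossing edges, and invoking \fullref{corollary:bothcomponents} to control the cores -- is the main obstacle; the clean case above already contains the essential mechanism.
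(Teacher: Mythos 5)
Your clean case is correct, and it is essentially the paper's base case: in the paper's language it is the situation where the \emph{partially pruned core} $\ppcore$ of the decomposing pair $(\tilde{Q},\tilde{R})$ is the single vertex $*$ adjacent to $e$, and your count --- pushing $Z=\{a\}\cup D_U$ through $a$ drops the valence at $a$ from $|S|=|Q\setminus R|+|R\setminus Q|$ to $|Q\cap R|+|R\setminus Q|=|R|<|S|$ --- is exactly the paper's. The genuine gap is the reduction to this case. Your descent cannot terminate in the way you claim: at a vertex $p$ deep in the interleaving region, the direction from $p$ back toward $e$ is never ``clean'' (its shadow contains $V$ together with whatever parts of $U$ and $W$ lie outside the branch you descended into), and the complexity count at $p$ does not close. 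In the paper's schematic (\fullref{fig:ppcore}), the stem valence at such a leaf $p=v$ is $b+c+d+e+f+g$, where $f$ and $g$ count lines in \emph{neither} $\tilde{Q}$ nor $\tilde{R}$ that cross the stem, and the Whitehead push replaces it by $a+c+e+g$, where the count $a$ is the number of lines of $\tilde{Q}\cap\tilde{R}$ through $v$ that do \emph{not} cross the stem. Minimality of $\Wh(*)$ therefore yields only the inequality $a\geq b+d+f$, not a contradiction, since nothing bounds $a$. The implication ``separation visible by first directions $\Rightarrow$ complexity reduction'' is special to the vertex incident to $e$, where the relevant valence is exactly $|S|$ and every edge in the $e$-direction comes from a line of $\tilde{S}$; your proposal transplants that count to a vertex where it simply is not valid.

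What is missing is the paper's replacement step, which is the bulk of its proof: rather than seeking an immediate contradiction at a leaf $v$ of $\ppcore$, the paper uses the inequality $a\geq b+d+f$ (extracted from minimal complexity) to certify that a \emph{modified} pair $(Q',R')$ --- obtained by removing the lines through $v$ from $\tilde{Q}\cap\tilde{R}$ and adding the $f$-lines, and re-sorting $\tilde{S}$ accordingly --- is again a non-crossing pair of minimal cut sets with $|Q'|,|R'|<|S|$ decomposing $S$, and that its partially pruned core is strictly smaller (it omits $v$). Iterating shrinks $\ppcore$ to $*$, where your clean-case count finally applies. Verifying that $(Q',R')$ is minimal, non-crossing, and smaller requires the full bookkeeping of \fullref{fig:ppcore} (e.g.\ $\Y_0=\emptyset\iff f=0$, the deduction $i>0$, and the estimate $|S|-|R'|\geq|S|-|R|+2b>0$); none of this is a routine ``re-running of the count,'' so the step you flag as the main obstacle is indeed where the theorem lives, and your proposed route through it, as stated, fails.
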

\begin{proof}
Let $e$ be an edge of $\tree$. Let
$\tilde{S}$ be the lines of $\lines$ that cross $e$. Let $S=q_*(\tilde{S})$.

$S$ is minimal by \fullref{proposition:edgeset}.
Suppose $S$ decomposes into $Q$ and $R$. 
We must have $Q\cap R\neq \emptyset$, otherwise $Q$ and $R$ are proper subsets of
$S$ that are cut sets, contradicting minimality of $S$.
Since $Q$ and $R$ do not cross and $S\setminus R=Q\setminus R$, $S$ does
not cross $R$. 
Thus, since they are minimal, $R$ does not cross $S$. 
Therefore, $R\setminus S=R\cap Q$ is contained in one component of
$\decomp\setminus S$. 
This means that $q_*^{-1}(Q\cap R)=\tilde{Q}\cap\tilde{R}$ is contained in one component of $\tree\setminus e$. 

Let $*$ be the vertex of $\tree$ incident to $e$ on the $\tilde{Q}\cap \tilde{R}$ side.

It is possible that pruning the cores of $\tilde{Q}$ or $\tilde{R}$ would remove $*$. Let the
\emph{partially pruned core} of $\tilde{Q}$, $\ppcore_{\tilde{Q}}$, be the result of pruning the
core of $\tilde{Q}$ as much as possible without pruning off $*$. 
Note $\ppcore_{\tilde{Q}}=\ppcore_{\tilde{R}}$, so we may just call it $\ppcore$.

\[|R\setminus Q|+|Q\cap R|=|R| < |S|=|Q\setminus R|+|R\setminus Q|\]

So $|Q\cap R|<|Q\setminus R|$. Similarly, $|Q\cap R|<|R\setminus Q|$.

There are two connected components of $\Wh(\ppcore)\setminus\tilde{Q}$,
call them component 0 and component 1. 
Since $Q$ and $S$ do not cross, everything on the side of $e$ opposite $\tilde{Q}\cap \tilde{R}$
belongs to a single component.

First suppose $\ppcore=*$.
Suppose the edge $e$ oriented away from $*$ has label
$x\in\mathcal{B}\cup\bar{\mathcal{B}}$; suppose the corresponding
vertex in $\Wh(\ppcore)\setminus\tilde{Q}$ is in component 1.
Suppose the vertex
corresponding to the edge labeled $\bar{x}$ is in component 0.
Then the Whitehead automorphism that pushes the vertices in $\Wh(*)$ in
component 1 through $x$ changes the valence at $x$ from
$|S|=|Q\setminus R|+|R\setminus Q|$ to
$|Q\cap R|+|R\setminus Q|$.
Since $|Q\cap R|<|Q\setminus R|$ this contradicts the assumption that
the Whitehead graph had minimal complexity.

Conversely, if the vertex $\bar{x}$ is in component 1 push
\[Z=\{x\}\cup\{\text{vertices of component } 0\}\] through $x$ and get a contradiction.

Now suppose $\ppcore$ is not just $*$. Then there is some leaf $v\neq *$.
Suppose the stem of $v$ (oriented away from the leaf) has label $x\in\mathcal{B}\cup\bar{\mathcal{B}}$, and suppose the 
vertex in $\Wh(\ppcore)\setminus\tilde{Q}$ corresponding to $\bar{x}$ is in component 1.

\fullref{fig:ppcore} shows a schematic diagram of $\Wh(\ppcore)$.

The labeling in the diagram is as follows:
\begin{itemize}
\item $\X_0$ = the portion of component 0 on the $v$ side of
 the stem.
\item $\X_1$ = the portion of component 1 on the $v$ side of
 the stem.
\item $\Y_0$ = the portion of component 0 between the stem of
 $v$ and $e$.
\item $\Y_1$ = the portion of component 1 between the stem of
 $v$ and $e$.
\item $\mathcal{Z}$ = everything on the side of $e$ opposite $\tilde{Q}\cap\tilde{R}$.
\item lowercase letters represent the number of lines with endpoints
  in the specified regions with:
  \begin{itemize}
  \item $a$, $b$, $c$ and $h$ counting the lines of
    $\tilde{Q}\cap\tilde{R}$
\item $d$ and $i$ counting the lines of $\tilde{Q}\setminus\tilde{R}$
\item $e$ and $j$ counting the lines of $\tilde{R}\setminus\tilde{Q}$
\item $f$ and $g$ counting the lines not in $\tilde{Q}\cup\tilde{R}$
  crossing the stem.
  \end{itemize}
\end{itemize}

\begin{figure}[h]
\labellist \small
\pinlabel $v$ [r] at 33 36
\pinlabel $a$ [l] at 13 36
\pinlabel $b$ [tr] at 28 53
\pinlabel $c$ [br] at 28 19
\pinlabel $d$ [tr] at 39 55
\pinlabel $e$ [br] at 39 16
\pinlabel $f$ [bl] at 48 60
\pinlabel $g$ [tl] at 48 12
\pinlabel $h$ [r] at 114 33.5
\pinlabel $i$ [b] at 183 57
\pinlabel $j$ [t] at 183 16
\pinlabel $\X_0$ at 22 64
\pinlabel $\X_1$ at 22 9
\pinlabel $\Y_0$ at 115 64
\pinlabel $\Y_1$ at 115 9
\pinlabel $\mathcal{Z}$ at 205 37
\pinlabel $Q$ [b] at 170 65
\pinlabel $R$ [t] at 170 9
\pinlabel $S$ [b] at 176 66
\endlabellist
  \centering
  \includegraphics[width=\textwidth]{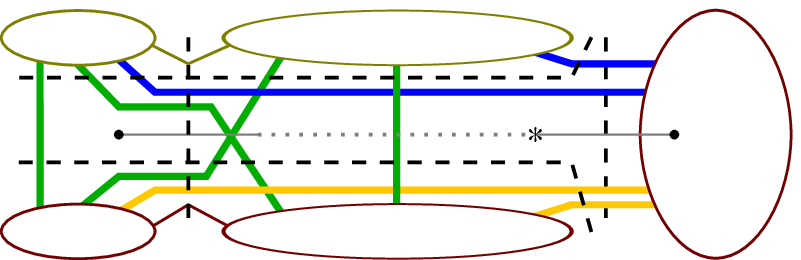}
  \caption{Schematic diagram for $\Wh(\ppcore)$}
\label{fig:ppcore}
\end{figure}

Since $v\neq *$ is a leaf of $\ppcore$ we must have $a>0$ and
$\X_0$ and $\X_1$ nonempty.

Minimality of $Q$ implies that $\Wh(\ppcore)\setminus\tilde{Q}$ has
exactly two connected components. In the diagram they are
$\X_0\cup\Y_0$ and
$\X_1\cup\Y_1\cup\mathcal{Z}$.

$\X_0\cup\Y_0$ must belong to a connected component,
so $\Y_0=\emptyset$ if and only if $f=0$.
$\Y_0=\emptyset$ also implies $c=h=i=0$. Now $d+i=|Q\setminus
R|$, so this would imply $d>0$.

$R$ is also a minimal cut set, so $\Wh(\ppcore)\setminus\tilde{R}$ must
have exactly two components. In the diagram they are
$\X_0\cup\Y_0\cup\mathcal{Z}$ and
$\X_1\cup\Y_1$.
Since $\X_1\cup\Y_1$ is connected,
$\Y_1=\emptyset$ if and only if $g=0$.

Thus, we have:
\begin{enumerate}
\item $\Y_0=\emptyset \iff f=0 \implies d>0,\,c=h=i=0$
\item $\Y_1=\emptyset \iff g=0 \implies e>0,\,b=h=j=0$
\end{enumerate}

The Whitehead automorphism that pushes
\[Z=\{x\}\cup\{\text{vertices of $\Wh(v)$ in
component 0 of $\Wh(\ppcore)\setminus\tilde{Q}$}\}\]
 through the stem changes the valence of
vertex $x$ from $b+c+d+e+f+g$ to $a+c+e+g$. 
By our minimal complexity assumption, we must therefore have $a\geq
b+d+f$.

Now $|Q\setminus R|>|Q\cap R|\geq a+b+c\geq 2b+c+d+f$, which means
that $|Q\setminus R|>d$, so $i>0$.

We will now change $Q$ and $R$ to a new decomposing pair $Q'$ and $R'$ for $S$ with
strictly smaller partially pruned core.

Let $\tilde{Q}'\setminus \tilde{R}'$ be the lines of
$\tilde{Q}\setminus \tilde{R}$ that do not pass through $v$.

Let $\tilde{R}'\setminus\tilde{Q}'$ be the rest of $\tilde{S}$.

Let $\tilde{Q}'\cap\tilde{R}'$ be $\tilde{Q}\cap\tilde{R}$ minus the
lines contributing to $a$ and $b$ plus the lines contributing to $f$.

\begin{figure}[h]
\labellist \small
\pinlabel $v$ [r] at 33 36
\pinlabel $a$ [l] at 13 36
\pinlabel $b$ [tr] at 28 53
\pinlabel $c$ [br] at 28 19
\pinlabel $d$ [tr] at 39 55
\pinlabel $e$ [br] at 39 16
\pinlabel $f$ [bl] at 48 60
\pinlabel $g$ [tl] at 48 12
\pinlabel $h$ [r] at 114 33.5
\pinlabel $i$ [b] at 183 57
\pinlabel $j$ [t] at 183 16
\pinlabel $\X_0$ at 22 64
\pinlabel $\X_1$ at 22 9
\pinlabel $\Y_0$ at 115 64
\pinlabel $\Y_1$ at 115 9
\pinlabel $\mathcal{Z}$ at 205 37
\pinlabel $Q'$ [b] at 170 65
\pinlabel $R'$ [t] at 170 9
\pinlabel $S$ [b] at 176 66
\endlabellist
  \centering
  \includegraphics[width=\textwidth]{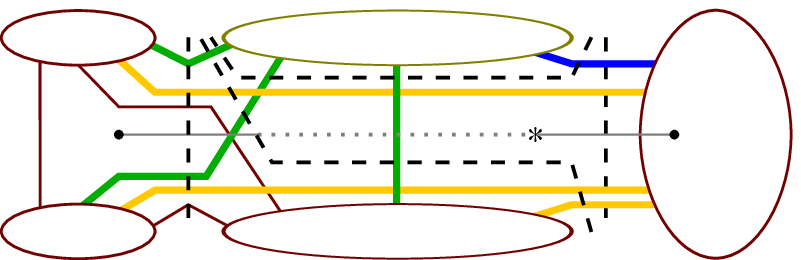}
  \caption{Schematic diagram for modified $\Wh(\ppcore)$}
\label{fig:ppcore2}
\end{figure}

\begin{align*}
  |S|-|R'|&=|{Q}'\setminus{R}'|-|{Q}'\cap{R}'|\\
&=|Q\setminus R|-d-(|Q\cap R|-a-b+f)\\
&=|Q\setminus R|-|Q\cap R|+a+b-d-f\\
&\geq |Q\setminus R|-|Q\cap R| +(b+d+f)+b-d-f\\
&=|S|-|R|+2b\geq |S|-|R|>0
\end{align*}

A similar computation shows $|S|-|Q'|\geq |S|-|Q|+2(b+d)>0$.

We must show that $Q'$ and $R'$ are non-crossing minimal cut sets.

The components of $\Wh(\ppcore)\setminus\tilde{Q}'$ are $\Y_0$
and $\X_0\cup\X_1\cup\Y_1\cup\mathcal{Z}$. To
see that the latter is connected, note that $a>0$, $e+j>0$ and either
$\Y_1=\emptyset$ or $g>0$.

Thus, $Q'$ is a minimal cut set since $\Wh(\ppcore)\setminus\tilde{Q}'$
has exactly two connected components and every line of $\tilde{Q}'$
goes from one component to the other.

By similar considerations, $R'$ is a minimal cut set since
$\Wh(\ppcore)\setminus\tilde{R}'$ has components
$\Y_0\cup\mathcal{Z}$ and $\X_0\cup\X_1\cup\Y_1$.

That $Q'$ and $R'$ are non-crossing follows from the observation:
\[\Y_0\cap\left(\X_0\cup\X_1\cup\Y_1\right)=\emptyset\]

We have not added anything to $\tilde{Q}'\cap\tilde{R}'$ except
possibly some lines going through $v$, so the new partially pruned core is contained in the old one minus the
vertex $v$.

If $\bar{x}$ is in component 0, repeat the argument with the roles of
$Q$ and $R$ reversed and reach a similar conclusion.

Thus, by repeating this process, we can reduce the partially pruned
core until we find some decomposing pair $Q$ and $R$
so that the partially pruned core is just $*$. We have already
seen that that leads to a contradiction, so $S$ is indecomposable.
\end{proof}

\begin{theorem}[Indecomposables are bounded]\label{theorem:boundeddecomposable}
  The diameter of the pruned core of an indecomposable cut set $S$ is
  bounded in terms of $\mathcal{L}$ and $|S|$.
\end{theorem}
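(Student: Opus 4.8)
The plan is to argue by contrapositive: I will show that if the pruned core $\pcore$ of a minimal cut set $S$ (with no element in a cut pair) contains a sufficiently long straight geodesic segment, then $S$ must in fact be \emph{decomposable}. Edge cut sets have $\pcore$ equal to a pair of adjacent vertices, so their diameter is trivially bounded; otherwise every leaf of $\pcore$ carries a line of $\tilde S$ that does not pass through its stem, so $\pcore$ has at most $2|S|$ leaves and hence at most $O(|S|)$ maximal straight segments (the paths between consecutive branch vertices and leaves). Bounding the length of each such segment in terms of $\lines$ and $|S|$ therefore bounds $\operatorname{diam}\pcore$, which is the goal.

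Fix a maximal straight segment $\gamma=[v_0,v_m]$ of $\pcore$ and extend it to a bi-infinite geodesic $\hull_0=[\hull_0^-,\hull_0^+]$ of $\tree$. By \fullref{proposition:twocomponents}, $\Wh(\core)\setminus\tilde S$ (equivalently $\Wh(\pcore)\setminus\tilde S$) has exactly two components $A^0,A^1$, and by \fullref{corollary:bothcomponents} every vertex of $\pcore$ sees both. Along $\gamma$ I track at each vertex $v_i$ a combinatorial \emph{state}: the ordered pair of generator labels on the edges $e_i,e_{i+1}$ (the direction in which $\gamma$ traverses $v_i$); for each of the boundedly many lines of $\lines$ crossing $e_{i+1}$, whether it lies in $\tilde S$; and the assignment of each such crossing line to $A^0$ or $A^1$. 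Because $\lines$ is $F$-invariant and $F$ acts simply transitively on the vertices of $\tree$, the underlying local Whitehead graph is the same at every vertex; only the finite decoration by $\tilde S$-membership and $A^0/A^1$-color varies, so the number of states is bounded in terms of $\lines$ and $|S|$.

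By the pigeonhole principle, once $m$ exceeds this bound, two vertices $v_i,v_j$ ($i<j$) carry the same decorated state. Put $g=v_jv_i^{-1}\in F$. Matching of the direction labels forces $g\hull_0=\hull_0$, so $g$ translates along $\hull_0$; as any element of $F$, it preserves $\lines$; and matching of the decoration carries the colored, $\tilde S$-marked configuration at $v_i$ to that at $v_j$. Since the $A^0/A^1$-coloring is a partition at least as coarse as connectivity in $\Wh(\pcore)\setminus\tilde S$, it is compatible with the splicing maps and the periodicity propagates along the block $[v_i,v_j]$ exactly as in the proof of \fullref{lemma:periodicbadpair}. Because $\decomp$ has no cut pairs, $q(\{g^\infty,g^{-\infty}\})$ is not a cut set, so \fullref{proposition:gcriterion} restricts the period to the controlled regimes, equivalently the dichotomy of \fullref{corollary:longsegments} on the segment: either $\Wh(\gamma)\setminus\hull_0$ is connected, or it has two components one of which is a single free edge $l$. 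I then cut $S$ across the periodic block: split $\tilde S$ into the lines lying on the $v_0$-side and those on the $v_m$-side of $[v_i,v_j]$, adjoining to each side the (at most one) line running through the whole block. When some interior vertex of the block carries a free edge $l\notin\tilde S$ and no line of $\tilde S$, this is precisely the splitting of \fullref{proposition:decomposable}; in general the periodicity of $g$ guarantees that the two cut-boundaries at $v_i$ and $v_j$ glue compatibly, producing sets $\tilde Q$ and $\tilde R$ with $Q\cap R$ a single line.

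The main obstacle is this final step: verifying that $Q=q_*(\tilde Q)$ and $R=q_*(\tilde R)$ are genuine \emph{minimal} cut sets — each line of $\tilde Q$ running between the two components of $\decomp\setminus Q$, via \fullref{lemma:openedges} together with a separation argument in the spirit of \fullref{proposition:edgeset} — that $Q$ and $R$ do not cross (their cores lie on opposite sides of the block and meet only in the shared line), and that $|Q|,|R|<|S|$. As in \fullref{proposition:decomposable}, the size inequality relies on the absence of cut pairs to force $|Q\cap R|=1<|Q\setminus R|$ and likewise for $R$. Granting this, $S=Q\Delta R$ exhibits $S$ as decomposable, contradicting the hypothesis. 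Hence no straight segment of $\pcore$ exceeds the pigeonhole bound, and since $\pcore$ has only $O(|S|)$ such segments, $\operatorname{diam}\pcore$ is bounded in terms of $\lines$ and $|S|$.
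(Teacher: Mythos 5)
The core of your plan --- a long segment of $\pcore$ forces $S$ to be decomposable --- is the same as the paper's, but your argument breaks at exactly the step you flag as ``the main obstacle,'' and that step is not a routine verification. Your maximal straight segments run between branch vertices of $\pcore$ and may be crossed by many lines of $\tilde S$. For such a block the dichotomy of \fullref{corollary:longsegments} concerns $\Wh(\X)\setminus\hull$ with the lines of $\tilde S$ still present, so in the connected case it yields no contradiction: deleting the $\tilde S$-edges can perfectly well disconnect a connected $\Wh(\X)\setminus\hull$, and \fullref{corollary:bothcomponents} is not violated. In that case there is also no line running the full length of the block, so $Q\cap R=\emptyset$ and your ``cut across the periodic block'' is not a decomposition in the sense of the definition (moreover, a line of $\tilde S$ crossing edges interior to the block has no well-defined side). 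Even in the free-edge case, $Q\cap R$ is a single line only if no other line of $\tilde S$ crosses inside the block; in general minimality of $Q$ and $R$, non-crossing, and the strict size inequalities do not follow from the periodicity of $g$ alone --- you would need a new, unproven generalization of \fullref{proposition:decomposable} for $\tilde S$-crossed blocks. (Smaller slips: $g\hull_0=\hull_0$ is false as stated, since only the axis of $g$ --- which agrees with your segment on $[v_i,v_j]$ and then continues periodically --- is $g$-invariant; and $g$ does not preserve the finite set $\tilde S$, so ``periodicity propagates'' must be run through the compatible-partition formalism of \fullref{lemma:periodicbadpair} relative to that axis, not through the matched decoration per se.)

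The paper's proof avoids all of this by subdividing differently: not at branch points, but in effect at the points where lines of $\tilde S$ enter and leave a geodesic in $\pcore$, so that each subsegment meets at most one line of $\tilde S$; since there are $|S|$ lines, there are $O(|S|)$ such subsegments. On a subsegment longer than the constant $R$ already extracted in \fullref{lemma:periodicbadpair}, no new pigeonhole is run: \fullref{corollary:longsegments} together with \fullref{corollary:bothcomponents} forces the two-components-one-free-edge configuration, and then the already-proven \fullref{proposition:decomposable} applies at an interior vertex carrying the free edge and no line of $\tilde S$, contradicting indecomposability (in the one-line case the free edge is that line itself, and connectivity of the remainder contradicts $S$ being a cut set). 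This yields the bound $2R(|S|-1)$ on the diameter. So your pigeonhole on decorated states is redundant where it works and insufficient where it is new; the missing idea is the reduction to segments meeting at most one line of $\tilde S$, which is what lets the established lemmas close the argument.
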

\begin{proof}
If $\pcore$ is a point or two points we are done. Otherwise it is a tree with leaves. Each leaf contains a line from $\tilde{S}$ that does not go through its stem, so there are at most $|S|$ leaves. 

Suppose $\X$ is a segment of $\pcore$ that does not have any lines of $\tilde{S}$ going through it.
By \fullref{corollary:bothcomponents}, at every vertex of $\pcore$ there are edges of $\Wh(\pcore)\setminus\tilde{S}$ from both components.
Since $S$ is indecomposable, by \fullref{proposition:decomposable} it is not the case that one of these components is a free edge.
Now apply \fullref{corollary:longsegments} and conclude that there is a bound $R$ on the length $\X$. 

Similarly, if $\X$ is a segment of $\pcore$ that meets exactly one of the $l_i$ then it has length bounded by $R$.

It follows that the diameter of $\pcore$ is at most $2R(|S|-1)$.
\end{proof}

\section{Rigidity}\label{sec:rigidity}
\subsection{The Problem with Cut Pairs}\label{sec:flexibility}
If $\decomp$ has cut pairs then it has either a cut point or a bad cut
pair, by \fullref{proposition:pairimpliesbadpair}. In either case,
there is a cut set such that the preimage in $\bdry \tree$ is two
points $\{g^\infty, g^{-\infty}\}$. The convex hull $\hull$ of these two points is a line, and
$\Wh(\hull)$ has multiple components, $A_1,\dots, A_k$.
For each $i$, let $\X_i$ be the union of components of $\tree\setminus
\hull$ corresponding to $A_i$.

The action of $g$ may permute these components, but $g^{k!}$ fixes
them.

Let $\phi\from\tree\to\tree$ be the quasi-isometry:
\[
\phi(x)=\begin{cases}
  g^{k!}x & \text{if } x\in\mathcal{X}_1\\
x &\text{otherwise}
\end{cases}
\]

This ``shearing'' quasi-isometry moves the $\mathcal{X}_1$ component
along $\hull$, fixing the rest of the tree. 

It is not hard to see that $\phi^n$ is not bounded distance from an
isometry for $n\neq 0$. Since $F$ acts by isometries it follows that
$F\phi^a$ and $F\phi^b$ are not the same coset of $F$ in
$\QIgp(F,\lines)$ when $a\neq b$, so $F$ is infinite index subgroup. 

It is possible to show directly that $\phi$ could not be conjugate
into an isometry group. Alternatively, notice that we can stack
shearing quasi-isometries to produce a sequence of quasi-isometries
with unbounded multiplicative quasi-isometry constants, see \fullref{fig:shear}.
Take an element $h$ of $F$ such that $h\hull$ is contained in the
$\mathcal{X}_1$ component with $h\mathcal{X}_1\subset \mathcal{X}_1$.

The desired sequence of quasi-isometries is $(\Phi_i)$, where:
\[\Phi_i=\phi^i\circ (h\phi^i\bar{h})\circ (h^2\phi^i\bar{h}^2)\circ \cdots\]
That is, for any $x\in \tree$ there exists some $j$ such that $x\in
h^j\X_1\setminus h^{j+1}\X_1$.
For $m>j$ $h^m\phi^i\bar{h}^m(x)=x$, so:
\[\Phi_i(x)=\phi^i\circ (h\phi^i\bar{h})\circ
(h^2\phi^i\bar{h}^2)\circ \cdots\circ (h^j\phi^i\bar{h}^j)(x)\]

\begin{figure}[h]
\labellist 
\small
\pinlabel $h^3\hull$ [r] at 0 137
\pinlabel $h^2\hull$ [r] at 0 97
\pinlabel $h\hull$ [r] at 0 57
\pinlabel $\hull$ [r] at 0 17
\pinlabel $\stackrel{\Phi_3}{\longrightarrow}$ at 160 80
\endlabellist
  \centering
  \includegraphics[height=\figstandardheight]{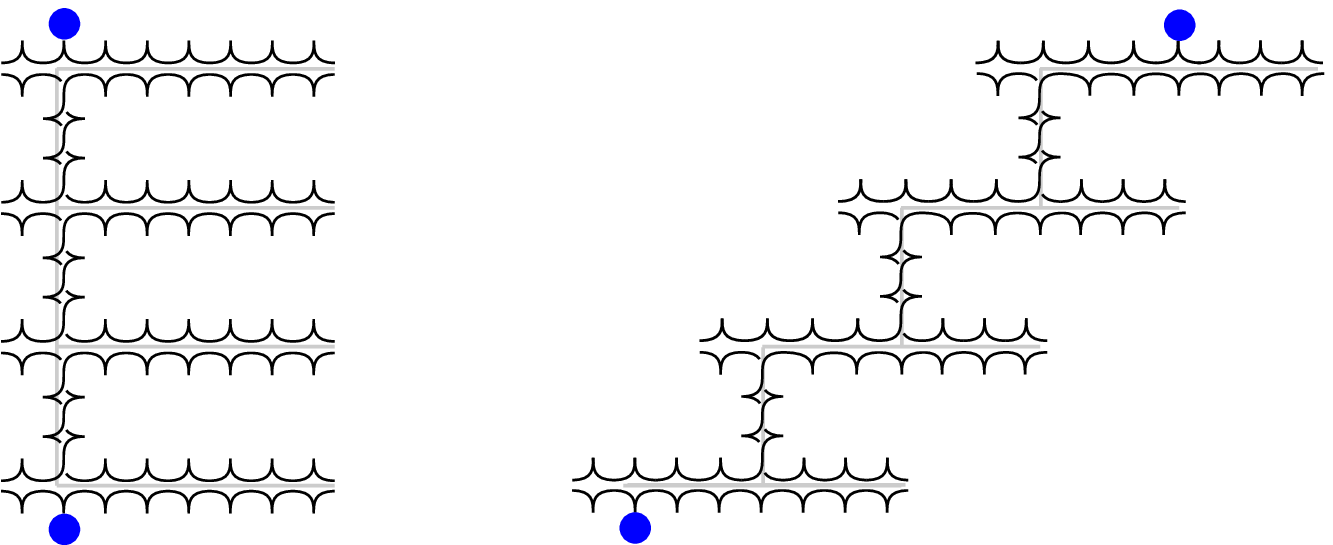}
  \caption{Shearing}
  \label{fig:shear}
\end{figure}

\subsection{Rigidity When There are No Cut Pairs}
Let $\lines$ be a line pattern in $F$.
Choose a free basis $\mathcal{B}$ for $F$ so that $\Wh(*)=\Wh_\mathcal{B}(*)\{\lines\}$ has minimal complexity, and let $\tree$ be the Cayley graph of $F$ with respect to $\mathcal{B}$.
Assume that $\decomp=\decomp_\lines$ has no cut pairs.
We will construct a cubing $X$, a quasi-isometry $\phi\from \tree\to
X$, and an isometric action of $\QIgp(F,\lines)$ on $X$ that agrees
with $\phi\QIgp(F,\lines)\phi^{-1}\subset\Isomgp(X)$, up to bounded
distance , completing the proof of the \hyperlink{main}{Main Theorem}.
The action of $F$ on $X$ will be cocompact, implying that
$\QIgp(F,\lines)$ has a complex of groups structure.

\subsubsection{Constructing the Cubing}
Let $b$ be the maximum valence of a vertex in $\Wh(*)$. 
Let $\{S_i\}_{i\in I}$ be the collection of indecomposable cut sets of size at most $b$ in $\decomp$.
For each $i\in I$, $S_i$ is a finite minimal cut set, by definition, and, since there are no cut pairs, $\decomp\setminus S_i$ has exactly two connected components, by \fullref{proposition:twocomponents}.

Let the two connected components of $\decomp\setminus S_i$ be $A_i^0$ and $A_i^1$. 
Let \[\Sigma=\{A^0_i\}_{i\in I}\cup\{A^1_i\}_{i\in I}\]

Recall from \fullref{sec:cubings} that from this information we define a graph as follows:

 A vertex is a subset $V$ of $\Sigma$ such that:
\begin{enumerate}
\item For all $i\in I$ exactly one of $A^0_i$ or $A^1_i$ is in $V$.
\item If $C\in V$ and $C'\in\Sigma$ with $C\subset C'$ then $C'\in V$. 
\end{enumerate}

Two vertices are connected by an edge if they differ by only one set in $\Sigma$. 

This gives a graph; it remains to select a path connected component of this graph to be the 1-skeleton of the cubing.

Define a \emph{bad triple} $\bar{x}=\{x_1,x_2,x_3\}$ to be an
unordered triple of distinct bad points in $\decomp$. 

There are no bad points in minimal cut sets, so for any bad triple and
any $S_i$, $\bar{x}\subset A_i^0\cup A_i^1$.
We let $\bar{x}$ decide democratically whether it will associate with
$A_i^0$ or $A_i^1$: say $\bar{x}\in A_i^\e$ if at least two of the
$x_j$'s are in $A_i^\e$.

Define $V_{\bar{x}}=\{A_i^\e\in\Sigma\mid \bar{x}\in A_i^\e\}$. This is
a vertex of $X$. 
Define the 0-skeleton of the cubing to be the set $X^{(0)}$ of all vertices that are
connected by a finite edge path to $V_{\bar{x}}$ for some bad triple $\bar{x}$.

The following lemma replaces Lemma~3.4 of \cite{Sag95}.
\begin{lemma}\label{lemma:finiteseparation}
 For any bad triples $\bar{x}$ and $\bar{y}$, there are only finitely many $S_i$ separating them.
\end{lemma}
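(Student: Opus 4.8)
The plan is to translate ``$S_i$ separates $\bar{x}$ from $\bar{y}$'' into a statement about geodesics in $\tree$, and then to show that only finitely many of the indecomposable cut sets $S_i$ can have their combinatorics concentrated in a bounded region of $\tree$ determined by $\bar{x}$ and $\bar{y}$. The decisive external input is \fullref{theorem:boundeddecomposable}: since every $S_i$ is indecomposable with $|S_i|\leq b$, there is a single constant $D=D(\lines,b)$ bounding the diameter of every pruned core $\pcore_i$ of $q^{-1}(S_i)$.

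First I would set up the dictionary between the two sides of a cut set and the components of the complement of its pruned core. By \fullref{proposition:nobadpoints} each $S_i$ consists of good points and (there being no cut pairs, hence no cut points) no point of $S_i$ is a cut point, so \fullref{lemma:openedges} together with pruning identifies the two components $A_i^0,A_i^1$ of $\decomp\setminus S_i$ with the two components of $\Wh(\pcore_i)\setminus\tilde{S}_i$. Consequently a bad point $z$ lies in $A_i^\e$ according to which component of $\closure{\tree}\setminus\pcore_i$ the ray to $q^{-1}(z)$ enters. In particular, if two bad points lie on opposite sides of $S_i$, then their preimages lie in different components of $\closure{\tree}\setminus\pcore_i$, so the geodesic in $\closure{\tree}$ joining them must meet $\pcore_i$.

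Next comes the geometric heart of the argument. Suppose $S_i$ separates $\bar{x}$ from $\bar{y}$, say $\bar{x}\in A_i^0$ and $\bar{y}\in A_i^1$. The democratic voting rule guarantees that at least two points of $\bar{x}$ lie on the $0$--side and at least two points of $\bar{y}$ lie on the $1$--side, so I may choose distinct $\hat{x},\hat{x}'\in q^{-1}(\bar{x})$ on the $0$--side and distinct $\hat{y},\hat{y}'\in q^{-1}(\bar{y})$ on the $1$--side. Since no single point can lie on both the $0$--side and the $1$--side, the geodesics $\gamma_1=[\hat{x},\hat{y}]$ and $\gamma_2=[\hat{x}',\hat{y}']$ share no endpoint at infinity; hence the set $\{u\in\gamma_1\mid d(u,\gamma_2)\leq D\}$ is a bounded sub-segment of $\gamma_1$ depending only on the four chosen points. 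By the dictionary above, $\pcore_i$ meets $\gamma_1$ in a point $s$ and $\gamma_2$ in a point $t$; as $\pcore_i$ is connected of diameter at most $D$ we have $d(s,t)\leq D$, so $s$ lies in that bounded sub-segment and $\pcore_i\subseteq\closure{N_D(s)}$ lies in a bounded region. There are only finitely many admissible choices of $(\hat{x},\hat{x}',\hat{y},\hat{y}')$ and of which triple is the $0$--side, so the union $B$ of the resulting bounded regions is a bounded subset of $\tree$ depending only on $\bar{x}$, $\bar{y}$ and $D$, and every $S_i$ separating $\bar{x}$ from $\bar{y}$ has $\pcore_i\subseteq B$.

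Finally I would count. By \fullref{proposition:twocomponents} each line of $\tilde{S}_i$ has its two endpoints on opposite sides of $S_i$, so its geodesic crosses $\pcore_i\subseteq B$; hence $\tilde{S}_i$ is contained in the set $\mathcal{F}$ of lines of $\lines$ crossing $B$. Since $B$ is bounded it meets finitely many edges of $\tree$, and each edge is crossed by only finitely many lines (its edge cut set), so $\mathcal{F}$ is finite. Each separating $S_i$ then corresponds to one of the finitely many subsets of $\mathcal{F}$ of size at most $b$, which bounds their number. The main obstacle is precisely the confinement step: without the uniform diameter bound of \fullref{theorem:boundeddecomposable}, the pruned cores could slide off to infinity along $\gamma_1$ and $\gamma_2$, so it is that bound, combined with the side-versus-component dictionary that lets a cut set ``see'' the chosen geodesics, that forces the cores into the bounded region $B$.
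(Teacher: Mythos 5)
Your proof is correct, and it rests on the same two external inputs as the paper's --- the uniform diameter bound on pruned cores from \fullref{theorem:boundeddecomposable} and local finiteness of $\lines$ --- but the localization mechanism is genuinely different. The paper takes the branch points $\core_{\bar{x}}$ and $\core_{\bar{y}}$ of the tripods spanned by $q^{-1}(\bar{x})$ and $q^{-1}(\bar{y})$ and shows that a separating $S_i$ must have $\pcore_{S_i}$ meeting the segment $[\core_{\bar{x}},\core_{\bar{y}}]$: if $\pcore_{S_i}$ missed this segment, the segment would lie in a single component $U$ of $\tree\setminus\pcore_{S_i}$, and since $U$ attaches to $\pcore_{S_i}$ along a single edge, at least two of the three rays from $\core_{\bar{x}}$ (and likewise from $\core_{\bar{y}}$) stay inside $U$, so both triples vote for the side of $S_i$ determined by $U$ and no separation occurs. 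Notice that this localization uses no diameter bound at all; \fullref{theorem:boundeddecomposable} enters only in the counting step, yielding the clean linear bound $c\left(1+d_\tree(\core_{\bar{x}},\core_{\bar{y}})\right)$. You instead confine $\pcore_{S_i}$ to the interval where two endpoint-disjoint, opposite-side geodesics $D$--fellow-travel, which spends the diameter bound twice (once to force $d(s,t)\leq D$, once to thicken to $\closure{N_D(s)}$), and you count more coarsely, via subsets of size at most $b$ of the finite family of lines crossing the bounded set $B$, rather than via the paper's per-vertex bound $c$ summed along a segment. Both arguments are sound: the paper's is sharper and avoids your finite case analysis over choices of preimages, while your fellow-traveling argument is a legitimate, if slightly more expensive, substitute. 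One step that both write-ups leave at the same level of rigor as the paper's ``it is not hard to see'' is the side-versus-component dictionary: that a bad point's side of $S_i$ is read off from the component of $\closure{\tree}\setminus\pcore_{S_i}$ containing its preimage, and that every line of $\tilde{S}_i$ crosses $\pcore_{S_i}$, both ultimately rest on the fact that each complementary component of $\pcore_{S_i}$ attaches along a single edge of $\tree$, whose vertex in $\Wh(\pcore_{S_i})$ lies in exactly one component of $\Wh(\pcore_{S_i})\setminus\tilde{S}_i$; your citations of \fullref{proposition:nobadpoints}, \fullref{lemma:openedges} and \fullref{proposition:twocomponents} are the right places to hang this, so I count it as implicit rather than a gap.
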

\begin{proof}
Let $\bar{x}=\{x_1,x_2,x_3\}$ and $\bar{y}=\{y_1,y_2,y_3\}$ be bad
triples.

The preimage $q^{-1}(\bar{x})=\{q^{-1}(x_i)\}_{i=1,2,3}$ consists of three distinct points in $\bdry \tree$.
The convex hull of three points in the boundary of a tree is a
tripod. The core, as previously defined, is the unique vertex that is
the branch point of the tripod. Denote this point $\core_{\bar{x}}$.

It is not hard to see that a cut set $S_i$ separates $\bar{x}$ from
$\bar{y}$ only if the pruned core $\pcore$ of $S_i$ intersects the
finite geodesic segment joining $\core_{\bar{x}}$ and
$\core_{\bar{y}}$ in $\tree$. 

By \fullref{theorem:boundeddecomposable}, there is a uniform bound $a$ on the diameter of the pruned core of any $S_i$. 
Since $\lines$ is locally finite, this means there is a uniform bound $c$ on the number of $S_i$ such that $*\in\pcore_{S_i}$.
If $\mathcal{Y}$ is any finite collection of vertices in $\tree$, the number of $S_i$ such that $\pcore_{S_i}\cap\mathcal{Y}\neq\emptyset$ is at most $c|\mathcal{Y}|$.

Thus, the number of $S_i$ separating $\bar{x}$ from $\bar{y}$ is at
most $c\cdot \left(1+d_\tree(\core_{\bar{x}},\core_{\bar{y}})\right)$.
\end{proof}

Add edges to the 0-skeleton as above to get the 1-skeleton $X^{(1)}$
of the cubing. With \fullref{lemma:finiteseparation} replacing
Lemma~3.4 of \cite{Sag95}, the following theorem follows by the same proof as in \cite{Sag95}:
\begin{theorem}{\rm{\cite[Theorem~3.3]{Sag95}}}
  $X^{(1)}$ is connected.
\end{theorem}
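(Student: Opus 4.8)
The plan is to follow Sageev's proof of \cite[Theorem~3.3]{Sag95}, with \fullref{lemma:finiteseparation} playing the role that his Lemma~3.4 plays in his setting. Since $X^{(0)}$ is by construction the set of all vertices joined by a finite edge path to $V_{\bar{x}}$ for \emph{some} bad triple $\bar{x}$, it suffices to show that for any two bad triples $\bar{x}$ and $\bar{y}$ the vertices $V_{\bar{x}}$ and $V_{\bar{y}}$ lie in a common path component. Every vertex of $X^{(0)}$ is joined to some such $V_{\bar{x}}$, so once all the $V_{\bar{x}}$ are mutually connected the whole $1$-skeleton is connected.

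First I would fix bad triples $\bar{x}$ and $\bar{y}$ and observe that $V_{\bar{x}}$ and $V_{\bar{y}}$ differ exactly in those coordinates $i$ for which $S_i$ separates $\bar{x}$ from $\bar{y}$, that is, for which $\bar{x}\in A_i^\e$ while $\bar{y}\in A_i^{1+\e}$. By \fullref{lemma:finiteseparation} there are only finitely many such $i$, so the two vertices disagree in only finitely many coordinates. The argument then proceeds by induction on this number of disagreeing coordinates, the base case being that the vertices coincide.

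For the inductive step the point is to produce a single coordinate that may be legally flipped so as to reduce the number of disagreements by one. Among the finitely many differing sets $A_i^\e\in V_{\bar{x}}$ I would choose one, say $A_{i_0}^{\e}$, that is minimal under inclusion, and claim that it is in fact minimal in $V_{\bar{x}}$ in the sense of \cite[Lemma~3.2]{Sag95}; by that lemma the flip $V_{\bar{x}}\mapsto (V_{\bar{x}}\setminus\{A_{i_0}^{\e}\})\cup\{A_{i_0}^{1+\e}\}$ then yields a genuine vertex. To prove the claim, suppose instead that some $A_j^\delta\in V_{\bar{x}}$ satisfies $A_j^\delta\subsetneq A_{i_0}^{\e}$. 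If $j$ were also a disagreeing coordinate, this would contradict the minimality of $A_{i_0}^{\e}$ among the differing sets; if $j$ were a non-disagreeing coordinate, then $\bar{y}\in A_j^\delta\subset A_{i_0}^{\e}$, contradicting that $S_{i_0}$ separates $\bar{x}$ from $\bar{y}$. Hence $A_{i_0}^{\e}$ is minimal in $V_{\bar{x}}$, the flip is valid, and it alters precisely the one coordinate $i_0$, bringing $V_{\bar{x}}$ one step closer to $V_{\bar{y}}$ while still disagreeing with it in only finitely many coordinates. Iterating produces an edge path from $V_{\bar{x}}$ to $V_{\bar{y}}$; its intermediate vertices, being joined to $V_{\bar{x}}$, all lie in $X^{(0)}$.

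I expect the main obstacle to be precisely this inductive step, namely verifying that a set that is minimal under inclusion among the disagreeing sets is genuinely minimal in the ambient vertex $V_{\bar{x}}$. This is where both the consistency conditions defining vertices and the democratic placement of bad triples are used in an essential way: the democratic rule guarantees that at each disagreeing coordinate $\bar{y}$ lies in the complementary half-space, which is exactly what rules out the troublesome containment in the non-disagreeing case. The remaining ingredients — the reduction to connecting the base vertices $V_{\bar{x}}$ and the bookkeeping of disagreeing coordinates — are formal once \fullref{lemma:finiteseparation} is available.
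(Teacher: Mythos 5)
Your proposal is correct and takes essentially the same approach as the paper: the paper proves this theorem by simply invoking Sageev's proof of \cite[Theorem~3.3]{Sag95} with \fullref{lemma:finiteseparation} substituted for his Lemma~3.4, which is exactly the argument you reconstruct (reduce to connecting the base vertices $V_{\bar{x}}$, note they disagree in only finitely many coordinates, then induct by flipping an inclusion-minimal disagreeing half-space, justified by \cite[Lemma~3.2]{Sag95}). One small remark: in your inductive step the non-disagreeing case already follows from condition (2) in the definition of a vertex applied to $V_{\bar{y}}$ (since $A_j^\delta\in V_{\bar{y}}$ and $A_j^\delta\subset A_{i_0}^{\e}$ would force $A_{i_0}^{\e}\in V_{\bar{y}}$), so the democratic rule is needed only to guarantee that $V_{\bar{y}}$ is a vertex and that \fullref{lemma:finiteseparation} applies, not for that containment argument itself.
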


The rest of the construction of the cubing follows as in \fullref{sec:cubings}.

\begin{remark}
  We are forced to use this alternate way of choosing the vertices of
  the cubing because every
  good point in $\decomp$ belongs to infinitely many of the cut sets. 

Also, \fullref{lemma:finiteseparation} is false if one tries to use
just bad points instead of bad triples. Two bad points are separated
by infinitely many of the $S_i$.
\end{remark}

\begin{remark}
  For a fixed vertex $v\in\tree$, there are uncountably many bad
  triples $\bar{x}$ with $\core_{\bar{x}}=v$. However, these give only
  finitely many distinct vertices $V_{\bar{x}}$ in $X$, because the
  $V_{\bar{x}}$ can only differ in the finitely many coordinates $i$ such that
  the pruned core of $S_i$ contains $v$. Even this is an over
  count. If $S_e$ is an edge cut set associated to an edge $e$
  incident to $v$, then every bad triple with $\core_{\bar{x}}=v$ lies
  in the same component of $\decomp\setminus S_e$. If our set of
  indecomposables is exactly the collection of edge cut sets then the
  cubing is isomorphic to the tree $\tree$. 
\end{remark}

Notice $X$ is defined in terms of the topology of $\decomp$, so we have:
\begin{lemma}
 Any homeomorphism of $\decomp$ induces an isomorphism of $X$.
\end{lemma}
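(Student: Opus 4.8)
The plan is to observe that every piece of data used to build $X$ is intrinsic to the topology of $\decomp$, so that a self-homeomorphism $h$ of $\decomp$ merely relabels that data and thereby determines a combinatorial automorphism of the cubing.

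First I would verify that $h$ permutes the collection of cut sets used in the construction. The notions \emph{cut set}, \emph{minimal cut set}, the cardinality of a finite cut set, the \emph{crossing} relation, and hence \emph{indecomposability} are all defined purely in terms of points and connected components of complements, so each is preserved by any homeomorphism. Consequently $h$ carries the collection $\{S_i\}_{i\in I}$ of indecomposable cut sets of size at most $b$ bijectively to itself: the bound $b$ is a fixed integer and $h$ preserves cardinality. Moreover, since no $S_i$ meets a cut pair, \fullref{proposition:twocomponents} shows each $\decomp\setminus S_i$ has exactly two components $A_i^0,A_i^1$, and $h$ sends these to the two components of $\decomp\setminus h(S_i)$. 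Thus $h$ induces a bijection of $\Sigma=\{A_i^0\}_{i\in I}\cup\{A_i^1\}_{i\in I}$ that respects containment, since $C\subset C'$ if and only if $h(C)\subset h(C')$, and that preserves the pairing $\{A_i^0,A_i^1\}$.

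A containment-respecting bijection of $\Sigma$ preserving this pairing takes consistent subsets to consistent subsets, because conditions (1) and (2) in the definition of a vertex of the ambient graph are phrased solely in terms of the pairing and of inclusion; it likewise takes pairs of vertices differing in a single coordinate to pairs differing in a single coordinate. Hence $h$ induces an automorphism of the graph whose vertices are the consistent subsets of $\Sigma$ and whose edges join subsets differing in one coordinate.

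The main point to check — and the only place the argument is not purely formal — is that $h$ preserves the chosen path component, that is, the $0$-skeleton $X^{(0)}$ singled out using bad triples. By \fullref{corollary:topologicalmorality}, under the standing assumption that $\decomp$ has no cut pairs the bad points are exactly the points lying in no minimal finite cut set, so $h$ maps bad points to bad points and hence bad triples to bad triples. Because the democratic assignment $\bar{x}\in A_i^\e$ depends only on which component contains at least two of the three points, it too is preserved, giving $h(V_{\bar{x}})=V_{h(\bar{x})}$. Since $h$ is a graph automorphism carrying the distinguished vertices $\{V_{\bar{x}}\}$ into themselves, it carries the path component they generate onto itself, so $h(X^{(0)})=X^{(0)}$. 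Finally, the higher cubes are glued in canonically wherever the boundary of a cube appears in the lower skeleton, so the graph automorphism extends uniquely to an isomorphism of the cubing $X$.
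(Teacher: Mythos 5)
Your proof is correct and follows exactly the route the paper intends: the paper offers no written proof beyond the remark that $X$ is defined in terms of the topology of $\decomp$, and your argument fills in precisely the details that remark compresses. In particular, you correctly identify and handle the one genuinely non-formal point — that the distinguished path component $X^{(0)}$ is preserved — by invoking the topological characterization of bad points (\fullref{corollary:topologicalmorality}), which is available under the section's standing assumption that $\decomp$ has no cut pairs.
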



\subsubsection{Estimates on the Cubing}
Recall from the proof of \fullref{lemma:finiteseparation}, we have a
bound $a$ on the diameter of the pruned core of any $S_i$, and there
is a $c$ such that
if $\mathcal{Y}$ is any finite collection of vertices in $\tree$, the number of $S_i$ such that $\pcore_{S_i}\cap\mathcal{Y}\neq\emptyset$ is at most $c|\mathcal{Y}|$.

$S_i$ and $S_j$ are non-crossing if their pruned cores are disjoint, so we have a uniform bound $c(2n)^{\frac{a}{2}}$ on the number of $S_j$ that cross a fixed $S_i$.

A $k$-cube in $X$ corresponds to a collection of $k$ pairwise crossing
cut sets, so the cubing is finite-dimensional.

Pick a vertex $x\in X$. Let $e$ and $e'$ be edges incident to
$x$. There are distinct  hyperplanes $H_e$ and $H_{e'}$ associated to
these edges. Since $e$ and $e'$ are incident to a common vertex, there
is no third hyperplane separating $H_e$ from $H_{e'}$. Therefore, the
valence of a vertex in $X$ is bounded by the maximum size of a
subcollection $\{S_i\}_{i\in J}$ of the indecomposable cut sets such that
for any $j$ and $k$ in $J$, there is no $i\in I$ such that $S_i$
separates $S_j$ and $S_k$. If $S_j$ and $S_k$ have disjoint pruned
cores then there is an edge cut set separating them, so the maximum
size of the set $J$ is at most $c(2n)^{\frac{a}{2}}$. Thus, $X$ is
uniformly locally finite.

A hyperplane $H$ corresponds to an equivalence class of edges in $X$. 
The 1-neighborhood of $H$ is the set of vertices that are endpoints of
these edges. If $k$ is the number of hyperplanes crossing $H$, then
the 1-neighborhood of $H$ has at most $2^{k+1}$ vertices and diameter
at most $k+1$. Crossing hyperplanes correspond to crossing cut sets,
so $k$ is at most $c(2n)^{\frac{a}{2}}$.

\subsubsection{The Rigidity Theorem}
\begin{theorem}
  $X$ is quasi-isometric to $\tree$.
\end{theorem}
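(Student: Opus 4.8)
The plan is to build an explicit quasi-isometry $\phi\from\tree\to X$ on vertex sets, together with a coarse inverse extracted from the edge cut sets. First I would set up the map. Since $F$ is non-abelian free, $\tree$ is $2n$--regular with $2n\geq 4$, so every vertex $v$ has at least three incident edges; the shadows $\shadow_\infty^v(\cdot)$ of three distinct incident directions are disjoint, uncountable clopen subsets of the Cantor set $\bdry\tree$, and since the good points are countable (one pair per line) each contains a bad point. Three such bad points form a tripod whose branch point is exactly $v$, so there is a bad triple $\bar x$ with $\core_{\bar x}=v$. Choosing one such $\bar x(v)$ per vertex, set $\phi(v)=V_{\bar x(v)}$, and extend to $\tree$ via the pseudo-map machinery of the Coarse Geometry section.

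Next I would establish that $\phi$ is a quasi-isometric embedding by bounding $d_X(\phi(u),\phi(v))$, which equals the number of cut sets $S_i$ separating $V_{\bar x(u)}$ from $V_{\bar x(v)}$. The upper bound $d_X(\phi(u),\phi(v))\leq c\,(1+d_\tree(u,v))$ is immediate from \fullref{lemma:finiteseparation}. For the lower bound, each edge $e$ on the geodesic $[u,v]$ gives an edge cut set $S_e$, which is indecomposable by \fullref{theorem:indecomposableedges} and of size at most $b$, hence lies in $\{S_i\}_{i\in I}$. A tripod argument shows $S_e$ separates $\phi(u)$ from $\phi(v)$: only one edge at $u=\core_{\bar x(u)}$ points toward $v$, so at most one ray of $\bar x(u)$ crosses $e$, whence at least two of its three points lie on the $u$--side of $e$ and democratically $\phi(u)$ lands in the $u$--side component of $\decomp\setminus S_e$ (using \fullref{corollary:edgeset}); symmetrically $\phi(v)$ lands in the $v$--side component. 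Distinct edges give distinct hyperplanes, so $d_X(\phi(u),\phi(v))\geq d_\tree(u,v)$, and combining,
\[ d_\tree(u,v)\;\leq\; d_X(\phi(u),\phi(v))\;\leq\; c\,d_\tree(u,v)+c. \]

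It then remains to prove $\phi$ is coarsely onto. I would define $\pi\from X^{(0)}\to\tree^{(0)}$ using the edge cut sets: a vertex $V$ of $X$ selects a side of each edge cut set, hence orients every edge of $\tree$, and I claim this orientation converges to a single vertex $\pi(V)$. For $V=\phi(v)$ the same tripod computation (now applied to all edges) orients every edge toward $v$, so $\pi(\phi(v))=v$. For general $V\in X^{(0)}$, connectivity to some $V_{\bar x_0}$ means $V$ differs from the ``toward $\core_{\bar x_0}$'' orientation in only finitely many edges, and consistency (the nesting relations recorded in the Remark of \fullref{sec:cubings}) rules out an end-type limit, so $V$ is vertex-type toward some $\pi(V)$. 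The key point is that $V$ and $\phi(\pi(V))=V_{\bar x(\pi(V))}$ agree on \emph{all} edge cut sets, and nesting forces agreement on every $S_i$ whose pruned core is far from $\pi(V)$; hence they differ only on the $S_i$ with $\pcore_{S_i}$ within the uniform bound $a$ of \fullref{theorem:boundeddecomposable} of $\pi(V)$. Local finiteness bounds the number of such $S_i$ by a universal constant $D$, giving $d_X(V,\phi(\pi(V)))\leq D$. Thus $\phi$ has coarsely dense image and is a quasi-isometry.

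I expect the main obstacle to be precisely this coarse-density step. The embedding estimates are routine once \fullref{lemma:finiteseparation} and the tripod argument are in hand, but vertices of the cubing need not be realized by bad triples (the consistency-versus-realization subtlety), so the real work is showing that a consistent vertex can disagree with an honest $V_{\bar x}$ in only boundedly many coordinates. This is exactly where boundedness of pruned cores (\fullref{theorem:boundeddecomposable}) together with local finiteness and the nesting of edge cut sets are indispensable: they guarantee that, apart from a uniformly bounded local cluster of ``unforced'' cut sets, the edge cut sets alone pin down the vertex.
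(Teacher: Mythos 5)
Your proposal is correct, but it takes a genuinely different route from the paper's proof, most substantially at the coarse-surjectivity step. The paper defines the map on edges rather than vertices: it sends an edge $e$ of $\tree$ to the uniformly bounded $1$--neighborhood of the hyperplane $H_e$ dual to the edge cut set $S_e$. The embedding estimate is then the same count you use --- at least one separating hyperplane per edge of the geodesic, and at most $c$ per vertex because the pruned cores meeting the geodesic are controlled by \fullref{theorem:boundeddecomposable} --- so on that half the two arguments are essentially identical (and both, like yours, tacitly use that distinct edges yield distinct edge cut sets, which follows from the absence of cut vertices in $\Wh(*)$; your tripod computation for why $H_e$ separates $\phi(u)$ from $\phi(v)$ is the extra bookkeeping your vertex-based map requires). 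For coarse density, however, the paper argues in one stroke: a vertex $x$ of $X$ outside the image has every incident edge dual to a cut set $S$ crossing no edge cut set, which forces $\pcore_S$ to be a single vertex, and there are boundedly many such $S$, so $x$ is boundedly close to the image. Your construction of an explicit coarse inverse $\pi$ is more elaborate but also more informative: a cubing vertex coherently orients every edge of $\tree$; finite distance from some $V_{\bar{x}_0}$ rules out end-type orientations (coherent orientations of a locally finite tree being exactly vertices and ends); and for $v=\pi(V)\notin\pcore_{S_i}$, the $v$--side halfspace of the edge cut set at the edge pointing from $v$ toward $\pcore_{S_i}$ is connected and disjoint from $S_i$ (since every line of $\tilde{S}_i$ meets $\pcore_{S_i}$), hence contained in one side of $S_i$, so upward closure forces $V$ and $V_{\bar{x}(v)}$ to agree on coordinate $i$. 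Disagreement is thus confined to the boundedly many $S_i$ with $\pcore_{S_i}$ near $v$, exactly as you claim. What your approach buys is an explicit retraction $X^{(0)}\to\tree^{(0)}$ and a transparent account of the consistency-versus-realization subtlety you flag; what it costs is the list of nesting verifications above, all of which do go through, so your anticipated ``main obstacle'' is real but surmountable precisely as you sketch. The paper's argument is shorter but leaves those same geometric facts implicit.
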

\begin{proof}
For each edge $e\in\tree$ there is a corresponding edge cut set
$S_e$. By construction, $S_e\in \{S_i\}$, so in the cubing $X$ there is a corresponding hyperplane
$H_e$. Define $\phi(e)$ to be the set of vertices in the
1-neighborhood of $H_e$. Recall from the previous section that this is
a set of boundedly many vertices with bounded diameter.

$d_X(\phi(e),\phi(e'))$ is the number of hyperplanes separating $H_e$ and $H_{e'}$. This is at least the number of edges separating $e$ from $e'$ in $\tree$, which is $d_\tree(e,e')$, and at most the number of $\{S_i\}$ such that $\pcore_{S_i}$ meets the geodesic between $e$ and $e'$ in $\tree$, which is bounded by $c\cdot d_\tree(e,e')$.
This shows that $\phi$ is a quasi-isometric embedding. 

Suppose there is a vertex $x\in X$ not in the image of $\phi$.
This $x$ has some incident edge, corresponding to some $S\in\{S_i\}$.
The hypothesis that $x$ is not in the image of $\phi$ implies that $S$ does not cross any edge cut set, which means that $\pcore_s$ is a single vertex $v\in\tree$.
There are boundedly many such $S$, and the distance from $x$ to $\phi(\tree)$ is less than this bound, so $\phi$ is coarsely onto, hence a quasi-isometry.
\end{proof}

The quasi-isometry $\phi$ gives a collection of quasi-lines $\phi(\mathcal{L})$ in $X$. 
In fact, we can see this collection of quasi-lines directly from $\decomp$. Each good point in $\decomp$ belongs to infinitely many indecomposable cut sets. 
For $l\in\lines$, the collection $\{S \mid S\text{ indecomposable},\, |S|\leq b,\,q_*(l)\in S\}$ corresponds to a collection of hyperplanes in $X$. 
The union of these hyperplanes is coarsely equivalent to $\phi(l)$.

\begin{theorem}[Rigidity Theorem]\label{theorem:rigidity}
  For $i=1,2$, let $F_i$ be a free group with line pattern $\lines_i$.
Let $\decomp_i$ be the decomposition space corresponding to $\lines_i$ in $F_i$.

Suppose, for each $i$, $\decomp_i$ has no cut pairs.

Let $\phi_i\from F_i\to X_i$ be the quasi-isometry to the cube complex constructed above.
Then: \[\phi_2 \QIgp\{(F_1,\lines_1)\to(F_2,\lines_2)\}\phi_1^{-1}=\Isomgp\{(X_1,\phi_1(\lines_1))\to(X_2,\phi_2(\lines_2))\}\]
\end{theorem}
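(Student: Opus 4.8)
The plan is to prove the two inclusions separately, exploiting throughout the principle that every ingredient of the cubing construction is purely topological data of the decomposition space, so that a homeomorphism $\decomp_1\to\decomp_2$ transports the whole construction. Before starting, I would observe that the two cubings were built with constants $b_1,b_2$ (the maximal valences of the respective Whitehead graphs). Since enlarging $b$ merely enlarges the collection $\{S_i\}$ while still including all edge cut sets — and hence, by the same estimates, still yields a cubing quasi-isometric to $\tree_i$ — I would first replace both by a common $b=\max(b_1,b_2)$, so that $X_1$ and $X_2$ are assembled from indecomposable cut sets of the same bounded size. This makes the collections on the two sides directly comparable.

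For the inclusion $\subseteq$, start with a pattern-preserving quasi-isometry $\psi\from F_1\to F_2$. As recorded in the preliminaries, $\psi$ extends to a boundary homeomorphism and descends to a homeomorphism $\bar\psi\from\decomp_1\to\decomp_2$. Every notion used to define $\Sigma$ is invariant under homeomorphism: being a cut set, minimality, the cardinality $|S|$, having exactly two complementary components (guaranteed by \fullref{proposition:twocomponents} since there are no cut pairs), the crossing relation, and indecomposability. Hence $\bar\psi$ carries the collection $\{S_i\}$ for $\decomp_1$ bijectively onto the corresponding collection for $\decomp_2$ and preserves crossings, so exactly as in the lemma that a self-homeomorphism of $\decomp$ induces an automorphism of the cubing, it induces an isomorphism $\bar\psi_*\from X_1\to X_2$. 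That $\bar\psi_*$ is pattern preserving follows because, by \fullref{corollary:topologicalmorality}, $\bar\psi$ sends good points to good points and so induces a bijection $\lines_1\to\lines_2$; since $\phi_i(l)$ is coarsely the union of the hyperplanes dual to the indecomposable cut sets of size $\le b$ containing $q_*(l)$, the isomorphism $\bar\psi_*$ carries $\phi_1(l)$ onto $\phi_2(\bar\psi_* l)$.

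The substantive step is to identify the coarse class of $\phi_2\psi\phi_1^{-1}$ with $\bar\psi_*$, and this is the part I expect to require the most care. Here I would use that $X_i^{(0)}$ is spanned by the vertices $V_{\bar x}$ of bad triples, and that $V_{\bar x}$ lies within uniformly bounded distance of $\phi_i(\core_{\bar x})$: since the three rays out of the branch point $\core_{\bar x}=v$ diverge at $v$, at least two of $x_1,x_2,x_3$ lie on the $v$-side of every edge cut set incident to $v$, which pins the edge-cut-set coordinates of $V_{\bar x}$ to those of $\phi_i(v)$, and $V_{\bar x}$ can differ from $\phi_i(v)$ in only boundedly many coordinates. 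Because $\psi$ sends the three rays to quasi-rays limiting to the points of $\bar\psi(\bar x)$, it sends $v$ to within bounded distance of $\core_{\bar\psi(\bar x)}$, so $\phi_2\psi\phi_1^{-1}(V_{\bar x})$ lies boundedly far from $V_{\bar\psi(\bar x)}=\bar\psi_*(V_{\bar x})$. As such vertices are coarsely dense and both maps are quasi-isometries, they are coarsely equal; thus $\phi_2\psi\phi_1^{-1}$ represents the pattern-preserving isometry $\bar\psi_*$, giving $\subseteq$.

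The reverse inclusion $\supseteq$ is comparatively soft. Given a pattern-preserving isometry $\Theta\from X_1\to X_2$, set $\psi=\phi_2^{-1}\Theta\phi_1\from F_1\to F_2$, where $\phi_2^{-1}$ is a coarse inverse; this is a composition of quasi-isometries, hence a quasi-isometry. For $l\in\lines_1$, the set $\phi_1(l)$ is coarsely the quasi-line described above, so $\Theta\phi_1(l)$ is coarsely $\phi_2(l')$ for some $l'\in\lines_2$ since $\Theta$ is pattern preserving, whence $\psi(l)=\phi_2^{-1}\Theta\phi_1(l)$ is coarsely $l'$; the constants are uniform because $\Theta$ is an isometry. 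Thus $\psi$ is pattern preserving and $\phi_2\psi\phi_1^{-1}=\phi_2\phi_2^{-1}\Theta\phi_1\phi_1^{-1}$ is coarsely $\Theta$. Combining the two inclusions yields the asserted equality.
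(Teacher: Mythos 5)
Your proof is correct and follows the same route as the paper: the paper's own proof is just the three-sentence observation that a pattern-preserving quasi-isometry induces a homeomorphism $\decomp_1\to\decomp_2$, which carries indecomposable cut sets to indecomposable cut sets of the same size and preserves crossing and intersection, hence induces a pattern-preserving isometry $X_1\to X_2$. Your additional steps --- normalizing the two constructions to a common size bound $b=\max(b_1,b_2)$ (a genuine subtlety the paper's proof silently assumes away, since a priori the homeomorphism need not match the size-$\le b_1$ collection with the size-$\le b_2$ collection), verifying via the bad-triple vertices $V_{\bar{x}}$ that the induced isometry is coarsely equal to $\phi_2\psi\phi_1^{-1}$, and establishing the reverse inclusion with a coarse inverse --- are correct elaborations of details the paper leaves implicit rather than a different method.
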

\begin{proof}
  Elements of $\QIgp\{(F_1,\lines_1)\to(F_2,\lines_2)\}$ give homeomorphisms $\decomp_1\to\decomp_2$.
Homeomorphism take indecomposable cut sets to indecomposable cut sets
of the same size and preserve crossing and intersection.
Therefore, we get isometries $X_1\to X_2$ respecting the line patterns. 
\end{proof}

The Rigidity Theorem answers Questions \ref{question:qiequivalence} and \ref{question:qigp} for rigid patterns.

The free group acts on itself by pattern preserving isometries via
left multiplication. Let $*$ be the identity vertex in $\tree$. For
any indecomposable cut set $S$, there is an element $g\in F$ such that
$*\in g(\pcore_S)$. There are only finitely many indecomposable cut
sets of bounded size with $*\in\pcore$, so $F$ acts cocompactly on
$X$. Therefore, $\QIgp(F,\lines)\cong\Isomgp(X,\phi(\lines))$ acts
cocompactly on $X$. 
This gives an explicit finite presentation for $\Isomgp(X,\phi(\lines))$ as a complex of groups.
Moreover, as the $F$ action is already cocompact, we have:

\begin{corollary}
If $\lines$ is a rigid line pattern and if $\QIgp(F,\lines)$ acts on
$X$ with finite stabilizers then $F$ is a finite
index subgroup of $\QIgp(F,\lines)$. 
\end{corollary}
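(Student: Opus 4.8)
The plan is to feed the situation into the Lim--Thomas covering theory for complexes of groups \cite{LimTho08} recalled in the preliminaries, taking $G=F$ and $H=\QIgp(F,\lines)$. By \fullref{theorem:rigidity} we may identify $\QIgp(F,\lines)$ with $\Isomgp(X,\phi(\lines))\leq\Aut(X)$, and under this identification the image of $F$ is a subgroup. The cubing $X$ is a simply connected polyhedral complex, so it is exactly the kind of space to which the theorem applies. In the paragraph preceding the corollary we saw that $F$ already acts cocompactly on $X$; since $F\leq\QIgp(F,\lines)\leq\Aut(X)$, the overgroup acts cocompactly as well, and both quotient complexes are therefore compact.

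First I would arrange that every group in play acts without inversions, subdividing the cubes of $X$ if necessary. This is harmless: it changes neither the cocompactness of the two actions nor the index $[\QIgp(F,\lines):F]$, and finite cell stabilizers remain finite under subdivision. The $F$-action then induces a complex of groups $\Gamma$ on the compact quotient $X/F$, and, by the Lim--Thomas bijection, the inclusion $F\leq\QIgp(F,\lines)$ corresponds to a covering by $\Gamma$ of the faithful, developable complex of groups $\Gamma_H$ arising from the cocompact $\QIgp(F,\lines)$-action on $X$.

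Now the hypothesis of the corollary is exactly that $\QIgp(F,\lines)$ acts on $X$ with finite stabilizers, which is to say that the local groups of $\Gamma_H$ are finite. Note that the $F$-stabilizer of a cell is contained in the corresponding $\QIgp(F,\lines)$-stabilizer, so the local groups of $\Gamma$ are automatically finite as well. As recorded in the preliminaries, when the complex of groups coming from the $H$-action has finite local groups the resulting covering of compact quotient complexes has finite degree, and that degree is the index $[\QIgp(F,\lines):F]$. Hence $F$ is a finite index subgroup of $\QIgp(F,\lines)$, as claimed.

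I expect the one point requiring care to be purely formal rather than geometric: verifying that the hypotheses of the Lim--Thomas theorem are literally met---that the actions can be taken without inversions and that the two complexes of groups really are the ones developed by the $F$- and $\QIgp(F,\lines)$-actions---after which the finite-stabilizer hypothesis makes the finiteness of the covering immediate. No further input about the geometry of $\decomp$ or of $X$ is needed beyond the cocompactness of the $F$-action established above.
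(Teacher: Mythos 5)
Your argument is exactly the paper's: the corollary is deduced from the Lim--Thomas covering theorem (\cite[Theorem 4]{LimTho08}) with $G=F$ and $H=\QIgp(F,\lines)\cong\Isomgp(X,\phi(\lines))$, using the cocompactness of the $F$-action established just before the corollary and the remark in the preliminaries that finite local groups for the $H$-action yield a finite covering of compact quotient complexes, hence finite index. Your write-up merely makes explicit the formal bookkeeping (subdividing to act without inversions, identifying the two induced complexes of groups) that the paper leaves implicit, so it is correct and essentially identical in approach.
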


\section{Examples}
\subsection{Whitehead Graph is the Circle}\label{example:circle}
We will show in this section that when the Whitehead graph is a circle we get a quasi-isometrically flexible line pattern.
\begin{theorem}\label{theorem:circle}
For a line pattern $\lines$ in $F$, the following are equivalent:
  \begin{enumerate}
\item Every Whitehead graph $\Wh_\mathcal{B}(*)$ that has no cut
 vertex is a circle. \label{item:allWcircle}
\item Some Whitehead graph $\Wh_\mathcal{B}(*)$ is a circle. \label{item:someWcircle}
\item $\decomp$ is a circle.\label{item:Dcircle}
\item Every minimal cut set of $\decomp$ has size two.\label{item:cutpairsonly}
 \end{enumerate}
\end{theorem}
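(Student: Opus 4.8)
The plan is to prove the cycle of implications $(1)\Rightarrow(2)\Rightarrow(3)\Rightarrow(4)\Rightarrow(1)$.

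For $(1)\Rightarrow(2)$: starting from any basis and repeatedly applying Whitehead automorphisms to eliminate cut vertices (which strictly decreases complexity, hence terminates) produces a basis $\mathcal{B}$ for which $\Wh_{\mathcal{B}}(*)$ has no cut vertex. By hypothesis \emph{every} such graph is a circle, so in particular this one is, and (2) holds.

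For $(2)\Rightarrow(3)$ I would first show, by induction on the number of vertices of $\X$, that $\Wh(\X)$ is a circle for every finite connected $\X\subset\tree$: the base case is $\Wh(*)$, and the inductive step of attaching a leaf $v$ across an edge $e$ amounts to splicing the circle $\Wh(v)$ to $\Wh(\X)$ at the valence-two vertex of $\Wh(\X)$ and the valence-two vertex of $\Wh(v)$ determined by $e$, and splicing two circles at valence-two vertices again yields a circle. Since each local graph $\Wh(*)$ is a circle, the cyclic order of the $2n$ directions it records is consistent from vertex to vertex, giving $\tree$ a planar (ribbon) structure and hence an embedding $\bdry\tree\hookrightarrow S^1$ as a Cantor set $C$ with countably many complementary gaps. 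The heart of the matter is to verify that every edge of a circle $\Wh(\X)$ joins cyclically adjacent directions, so that following a line of $\lines$ through the tree keeps both of its ends extreme within their shadows; this identifies the endpoint pair of each line with the two endpoints of a unique gap, and shows conversely that every gap is bounded in this way. Granting this, $\decomp$ is precisely $C$ with the two endpoints of each gap identified, which is homeomorphic to $S^1$ via the monotone degree-one collapsing map $S^1\to S^1$ that crushes each closed gap to a point (restricted to $C$, this map realizes exactly the identifications defining $\decomp$). The main obstacle is this matching of lines with gaps: one must use the circularity of all the finite $\Wh(\X)$, not merely of $\Wh(*)$, to rule out a line's end failing to be extreme in its shadow.

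For $(3)\Rightarrow(4)$: in a circle, deleting a single point leaves a connected arc while deleting two points leaves two arcs, and no proper subset of a two-point set disconnects; hence $\decomp$ has no cut points and every minimal cut set has size exactly two. For $(4)\Rightarrow(1)$: let $\mathcal{B}$ be any basis for which $\Wh_{\mathcal{B}}(*)$ has no cut vertex, with $\tree$ the corresponding Cayley graph. Hypothesis (4) forbids minimal cut sets of size $0$ or $1$, so $\decomp$ is connected and has no cut points; by \fullref{corollary:connected}, $\Wh(*)$ is therefore connected. For each generator direction the corresponding edge cut set is minimal by \fullref{proposition:edgeset} (as there are no cut points), and its size equals the valence of that vertex of $\Wh(*)$; by (4) this valence is $2$. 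Thus $\Wh(*)$ is a connected graph every vertex of which has valence two, that is, a circle, which is (1). Since $\mathcal{B}$ was an arbitrary no-cut-vertex basis, this establishes the full strength of (1).
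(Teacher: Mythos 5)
Your proposal is correct, and three of the four implications coincide with the paper's proof essentially step for step: (1)$\implies$(2) by running complexity-reducing Whitehead automorphisms until no cut vertex remains, (3)$\implies$(4) as an elementary topological property of circles, and (4)$\implies$(1) by observing that $\decomp$ is connected without cut points, so edge cut sets are minimal by \fullref{proposition:edgeset} and hypothesis (4) forces every vertex of $\Wh_\mathcal{B}(*)$ to have valence two. The genuine divergence is (2)$\implies$(3). The paper takes a geometric shortcut: a circular Whitehead graph exhibits the generators of $\lines$ as boundary words of a compact surface with boundary; endowing that surface with a hyperbolic metric makes the universal cover a fattened copy of $\tree$ whose boundary components are in bijection with $\lines$, and identifies $\decomp$ with $S^1=\bdry\Hy^2$ outright (this is essentially Otal's argument, as the paper remarks). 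You instead argue combinatorially: your splicing induction (a circle spliced to a circle at valence-two vertices is again a circle, so $\Wh(\X)$ is a circle for every finite connected $\X\subset\tree$) gives $\tree$ a ribbon structure, embeds $\bdry\tree$ in $S^1$ as a Cantor set, and realizes $\decomp$ as the monotone quotient collapsing each complementary gap. Your route buys independence from surface topology and hyperbolic geometry, staying entirely inside the paper's splicing formalism; the paper's route buys exactly the step you rightly flag as the heart of the matter, since in the surface picture the correspondence between lines and gaps is automatic (each line bounds a half-plane whose limit arc is a gap). For completeness, your flagged step does go through along the lines you indicate: for $\X\subset\X'$ the circle $\Wh(\X')$ refines the circle $\Wh(\X)$ (collapsing the arc of fine directions contained in a fixed coarse direction recovers $\Wh(\X)$), so adjacent directions in $\Wh(\X)$ are joined by a unique edge that is stable under enlarging $\X$; letting $\X$ exhaust $\tree$, the shadows of the two adjacent directions shrink to the endpoints of that stable line, matching lines bijectively with gaps, and the collapsing map then realizes precisely the identifications defining $\decomp$. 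So the sketch is sound, but a full write-up of (2)$\implies$(3) in your style would need to record this refinement-and-stabilization argument explicitly, which is the work the paper's surface realization avoids.
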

\begin{proof}
Clearly  (\ref{item:allWcircle})$\implies$(\ref{item:someWcircle}), because Whitehead automorphisms will eliminate cut vertices.

If some Whitehead graph $\Wh_\mathcal{B}(*)$ is a circle then we
can realize the free group $F_n$ as the fundamental group of a surface
with boundary, and the generators of the line pattern $\lines$ as
the boundary labels. We can give this surface a hyperbolic metric so
that the universal cover is just $\tree$ fattened, and the boundary
components are horocycles that are in bijection with the lines of
$\lines$. This gives us a homeomorphism between the decomposition
space and $S^1=\partial \mathbb{H}^2$. Thus
(\ref{item:someWcircle})$\implies$(\ref{item:Dcircle}).

(\ref{item:Dcircle})$\implies$(\ref{item:cutpairsonly}) is a
topological property of circles.

Now, suppose every minimal cut set of $\decomp$ has size two. 
Then $\decomp$ is connected with no cut points.
Choose a free basis $\mathcal{B}$ so
that $\Wh_\mathcal{B}(*)$ is connected without cut points. 
The edges incident to a vertex of $\Wh_\mathcal{B}(*)$ correspond to an edge cut set.
This is a minimal cut set by \fullref{proposition:edgeset}, so by
hypothesis has size two.
Therefore, $\Wh_\mathcal{B}(*)$ is a finite, connected graph with all valences equal
to two, hence, a circle. Thus, (\ref{item:cutpairsonly})$\implies$(\ref{item:allWcircle}).
\end{proof}

\begin{remark}
  Otal proves \cite[Theorem 2]{Ota92} that the decomposition space is
  a circle if and only if the the collection of words can be
  represented as the boundary curves a compact surface. The proof is
  essentially the same.
\end{remark}

\begin{theorem}\label{theorem:allcircles}
Let $F$ and $F'$ be free groups, possibly of different rank. Let
$\lines$ and $\lines'$ be line patterns in $F$ and $F'$, respectively.
Suppose $\decomp_\lines$ is a circle. There is a pattern preserving
quasi-isometry from $F$ to $F'$ if and only if $\decomp_{\lines'}$ is
also a circle.
\end{theorem}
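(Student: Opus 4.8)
The plan is to prove the two implications separately: the forward direction is immediate from general principles, while the reverse direction requires a geometric construction at infinity.

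First I would dispatch the ``only if'' direction. As recorded in the subsection on line patterns and the decomposition space, a pattern preserving quasi-isometry $\phi\from F\to F'$ extends to a homeomorphism $\bdry\phi\from\bdry\tree\to\bdry\tree'$ which descends to a homeomorphism of decomposition spaces $\decomp_\lines\to\decomp_{\lines'}$. Since $\decomp_\lines$ is a circle, and any space homeomorphic to a circle is a circle, $\decomp_{\lines'}$ is a circle as well.

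For the ``if'' direction, assume both decomposition spaces are circles. By \fullref{theorem:circle}, together with the construction in its proof, I would realize each pattern hyperbolically: choose free bases so that $\Wh(*)$ is a circle for each, giving $F=\pi_1\Sigma$ and $F'=\pi_1\Sigma'$ for compact hyperbolic surfaces with geodesic boundary whose universal covers embed in $\Hyp^2$ as convex cores, with $\lines$ and $\lines'$ the lifts of the boundary geodesics. In these models $\bdry\tree$ and $\bdry\tree'$ are the Cantor limit sets, and the collapsing maps $q,q'$ realize the identifications $\decomp_\lines\cong\bdry\Hyp^2\cong S^1\cong\decomp_{\lines'}$, in which the two endpoints of each line are exactly the two endpoints of a complementary gap. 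Fixing an orientation-preserving homeomorphism $h\from\decomp_\lines\to\decomp_{\lines'}$, I would lift it to a bijection $\bar h\from\bdry\tree\to\bdry\tree'$: over bad points the lift is forced, and over each good point the pair of gap endpoints $q^{-1}(x)$ is sent to the pair $q'^{-1}(h(x))$ matched so as to respect the circular order, which is possible because $h$ preserves orientation. This $\bar h$ is a homeomorphism of Cantor sets carrying endpoint pairs of $\lines$ to endpoint pairs of $\lines'$, hence pattern preserving on the boundary.

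The step I expect to be the main obstacle, and the real content of the theorem, is promoting $\bar h$ to an honest quasi-isometry $F\to F'$: a homeomorphism of the boundary Cantor sets is not in general induced by a quasi-isometry of trees, so one must use the ambient geometry to control its distortion. Here I would exploit that both models have uniformly bounded geometry — since $\Wh(*)$ is a finite circle, $\tree$ and $\tree'$ carry bounded-valence planar (ribbon) structures and the respective groups act cocompactly — and use the freedom in choosing $h$ to match the two gap structures in an order-preserving way. With such a choice $\bar h$ becomes uniformly bi-Lipschitz for the visual metrics, so by the standard correspondence between quasi-symmetric boundary maps and quasi-isometries of the $\Hyp^2$ convex cores it is the boundary map of a quasi-isometry $F\to F'$, which is pattern preserving by construction. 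The crux is precisely verifying that an orientation-preserving identification of the two boundary circles can be realized with uniformly bounded distortion \emph{independent of the possibly different topological types} of $\Sigma$ and $\Sigma'$; the common $S^1$ combinatorics of the decomposition spaces is exactly what makes this matching globally consistent.
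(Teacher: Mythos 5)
Your ``only if'' direction and the first step of your ``if'' direction coincide with the paper: the paper likewise invokes \fullref{theorem:circle} to choose bases with circular Whitehead graphs and realizes each pattern as the lifted boundary pattern of a compact hyperbolic surface with boundary. But at that point the paper simply cites the theorem of Behrstock and Neumann \cite{BehNeu06} that any two such universal covers admit boundary-pattern-preserving quasi-isometries (in fact bilipschitz equivalences, built directly on the fattened trees), whereas you attempt to construct the quasi-isometry at infinity --- and that is where your argument has a genuine gap, one you yourself flag as ``the crux'' but never close. The assertion that an orientation-preserving homeomorphism $h\from\decomp_\lines\to\decomp_{\lines'}$ can be chosen so that the lifted map $\bar h$ of boundary Cantor sets is uniformly quasi-symmetric is precisely the content of the theorem; the common $S^1$ combinatorics gives you an order-preserving matching of gap structures, but circular order alone carries no metric information, and the two gap structures can have wildly different visual geometries (different surface types, different boundary lengths, gaps shrinking at different rates). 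Nothing in your proposal controls the distortion of $\bar h$, so the appeal to the correspondence between quasi-symmetric boundary maps and quasi-isometries has no hypothesis to apply to. Completing your route would amount to reproving the Behrstock--Neumann theorem at infinity.

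There is also a smaller but real defect earlier in your lift: an arbitrary homeomorphism of the decomposition circles need not send good points to good points. The topological characterization of good points in \fullref{corollary:topologicalmorality} requires the absence of cut pairs, and in the circle case every pair of points is a cut pair, so good and bad points are \emph{not} topologically distinguished; indeed both are countable-dense and uncountable-dense subsets of $S^1$ respectively, and a generic homeomorphism scrambles them. So before $\bar h$ is even defined you must arrange $h$ to match the two countable dense sets of good points (say by a back-and-forth argument), and doing this \emph{simultaneously} with uniform quasi-symmetry is again the whole difficulty. In short: correct skeleton, same reduction as the paper, but the step replacing the citation of \cite{BehNeu06} is asserted rather than proved.
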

\begin{proof}
The ``only if'' direction is clear, as a pattern preserving
quasi-isometry induces a homeomorphism of decomposition spaces.

Suppose both $\decomp_\lines$ and $\decomp_{\lines'}$ are
circles. 
By \fullref{theorem:circle}, there exist free bases $\mathcal{B}$ of $F$ and
$\mathcal{B}'$ of $F'$ such that $\Wh_\mathcal{B}(*)\{\lines\}$ and
$\Wh_{\mathcal{B}'}(*)\{\lines'\}$ are circles.
  
As in
the proof of \fullref{theorem:circle} we can associate each pattern
with the boundary curves of the universal cover of a surface with
boundary. It is a theorem of Behrstock and Neumann \cite{BehNeu06}
that there are many boundary preserving quasi-isometries of such surfaces.
\end{proof}

For example, recall the \hyperlink{introexample}{example from the Introduction}.
Let $F=F_2=\left<a,b\right>$.

Let $\lines_1$ be the line pattern generated by the word $ab\bar{a}\bar{b}$. 

Let $\lines_2$ be the line pattern generated by the words $ab$ and
$a\bar{b}$. 

For each of these $\Wh(*)\{\lines_i\}$ is a circle, so the two patterns
are quasi-isometrically equivalent.

This example also shows that neither the number of generators of a
line pattern nor the widths of the generators are quasi-isometry invariants.

\subsection{Whitehead Graph is the Complete Graph}\label{example:complete}
Let $K_{2n}$ be the complete graph on $2n$ vertices, the graph consisting of $2n$ vertices with exactly one edge joining each pair of vertices.

Suppose $\lines$ is a line pattern in $F=F_n$ so that for some free basis
$\mathcal{B}$, $\Wh_\mathcal{B}(*)\{\lines\}=K_{2n}$.

The decomposition space $\decomp$ has no cut pairs. 

Suppose $S$ is a minimal finite cut set of $\decomp$ that is
not an edge cut set.
$\Wh(\pcore)\setminus\tilde{S}$ has two components.
The portion of $\Wh(\pcore)\setminus\tilde{S}$ at a leaf contains vertices from both
components.

The portion of $\Wh(\pcore)\setminus\tilde{S}$ at a leaf is a graph
obtained from $K_{2n}$ be deleting a vertex, corresponding to the stem
of the leaf, and interiors of some number of edges coming from lines of
$\tilde{S}$ that go through the leaf but not through the stem. 
The result is a disconnected graph with at least one vertex in each of
the components. 
Thus, we have  partition of $2n-1$ vertices into two subsets, and we
must delete all the edges between them.
The subsets have sizes $m$ and $2n-1-m$, for some $1\leq m\leq 2n-2$,
and the number of edges between them is $m(2n-1-m)\geq 2n-2$.
There are at least two leaves, so $|S|\geq 4n-4 >2n-1$. 
The edge cut sets have size $2n-1$, so our construction of a cubing
uses only the edge cut sets. Thus, the cubing is just the tree $\tree$.

In this case it is easy to compute: \[\QIgp(F,\lines)\cong
\Sym(2n)*_{\Sym(2n-1)}\left(\Sym(2n-1)\times \Sym(2)\right)\] 

Here, $\Sym(2n)$ is the symmetric group on $2n$ objects, stabilizing a
vertex of the tree and permuting the incident edges, and
$\Sym(2n-1)\times \Sym(2)$ is the stabilizer of an edge of $\tree$.

This discussion proves the following theorem:
\begin{theorem}
  Suppose $\lines$ is a line pattern in $F=F_n$ such that
  $\Wh_\mathcal{B}(*)\{\lines\}=K_{2n}$.
Suppose that $F'=F_m$ is another free group, possibly of different rank,
with line pattern $\lines'$. 

There is a pattern-preserving quasi-isometry  $F\to F'$ if and only if $\decomp_{\lines'}$ has the following properties:
  \begin{enumerate}
  \item There are no cut sets of size less than $2n-1$.
\item The collection of cut sets of size $2n-1$ yields a cubing that is a $2n$-valent tree.
\item The induced line pattern in the cubing restricts to the complete graph $K_{2n}$ in the star of any vertex.
  \end{enumerate}
\end{theorem}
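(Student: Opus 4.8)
The plan is to route everything through the decomposition spaces and invoke the Rigidity Theorem (\fullref{theorem:rigidity}). The key structural fact, established by the preceding discussion, is that $\decomp_\lines$ itself satisfies the three listed properties: the edge cut sets are exactly the cut sets of size $2n-1$, they are the smallest cut sets, the cubing they determine is the Cayley tree $\tree$ (which is $2n$--valent), and the induced pattern restricts to $\Wh(v)=K_{2n}$ at every vertex. In particular $\decomp_\lines$ has no cut points and no cut pairs. Since $n\geq 2$ forces $2n-1\geq 3$, property (1) likewise guarantees that any $\decomp_{\lines'}$ satisfying the hypotheses has no cut points or cut pairs, so that \fullref{theorem:rigidity} applies on both sides. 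The strategy is then to show that a pattern-preserving quasi-isometry $F\to F'$ exists if and only if $\decomp_\lines$ and $\decomp_{\lines'}$ are homeomorphic: for one direction a homeomorphism transports the three properties, and for the other the three properties reconstruct the homeomorphism.

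For the ``only if'' direction I would start from a pattern-preserving quasi-isometry $F\to F'$, which induces a homeomorphism $h\from\decomp_\lines\to\decomp_{\lines'}$. Homeomorphisms carry cut sets to cut sets of the same cardinality and preserve crossing and separation, so they carry the family of size-$(2n-1)$ cut sets onto the corresponding family and induce an isomorphism of the cubings built from these families; by \fullref{corollary:topologicalmorality} they also respect the good/bad dichotomy, hence the induced line pattern. Transporting the properties of $\decomp_\lines$ across $h$ then yields (1) that $\decomp_{\lines'}$ has no cut set smaller than $2n-1$, (2) that its size-$(2n-1)$ cubing is again a $2n$--valent tree, and (3) that the induced pattern restricts to $K_{2n}$ in every star.

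For the ``if'' direction, properties (2) and (3) say precisely that the patterned cubing of $\lines'$ coming from its size-$(2n-1)$ cut sets is a $2n$--valent tree carrying a line pattern that is $K_{2n}$ in every star --- the same combinatorial description as the patterned cubing $(\tree,\phi(\lines))$ of $\lines$. I would first check that these size-$(2n-1)$ cut sets are exactly the indecomposable cut sets feeding the cubing construction of \fullref{theorem:rigidity}: by (1) they are minimal and smallest, hence indecomposable, with uniformly bounded pruned cores by \fullref{theorem:boundeddecomposable}, so the Rigidity Theorem genuinely applies to this family. It then suffices to produce a pattern-preserving isometry between the two patterned cubings, equivalently a homeomorphism $\decomp_\lines\to\decomp_{\lines'}$; \fullref{theorem:rigidity} converts it into the desired quasi-isometry.

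The main obstacle is exactly this last reconstruction: showing that properties (1)--(3) determine $\decomp_{\lines'}$ up to homeomorphism. The completeness of the local graph $K_{2n}$ alone does \emph{not} pin down which bi-infinite geodesics of the $2n$--valent tree are lines, so one cannot simply declare two abstract $K_{2n}$--patterned trees isometric from local data. Instead I would exploit the homogeneity forced by $K_{2n}$: every vertex, every edge cut set, and every local ``piece'' of the decomposition space look identical, so $\decomp$ is built from identical pieces glued along the $2n$--valent tree. Building the homeomorphism by a back-and-forth induction over this tree --- matching base vertices, extending across incident edge cut sets using the complete turn pattern, and passing to the limit --- should yield $\decomp_{\lines'}\cong\decomp_\lines$, and by \fullref{corollary:topologicalmorality} any such homeomorphism automatically preserves the pattern. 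This is the same phenomenon underlying the circle case (\fullref{theorem:allcircles}), where all circular decomposition spaces are equivalent; here the uniform local model is $K_{2n}$ rather than a point, and the inductive gluing plays the role of the Behrstock--Neumann input.
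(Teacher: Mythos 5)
Your proposal is correct and takes essentially the same route as the paper, whose entire proof is the sentence ``This discussion proves the following theorem'': the preceding discussion shows the size-$(2n-1)$ cut sets are the smallest and are exactly the edge cut sets, that the resulting cubing is the $2n$--valent tree with $K_{2n}$ stars, and that \fullref{theorem:rigidity} converts pattern-preserving quasi-isometries into pattern-preserving isometries of the patterned cubings --- precisely your transport argument for ``only if'' and your rigidity argument for ``if''. The step you rightly isolate as the main obstacle, uniqueness of the $K_{2n}$--patterned $2n$--valent tree up to pattern-preserving isometry, is left implicit in the paper, and your inductive construction handles it correctly: since the lines crossing an edge induce a bijection between the non-edge directions at its two endpoints, the isometry extends uniquely outward from any bijection of stars at a base vertex, and on a tree this forced extension never obstructs (no back-and-forth is actually needed).
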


For example, the line pattern $\lines$ in $F=F_2$ with basis $\mathcal{B}=\{a,b\}$
generated by $a$, $b$, and $ab\bar{a}\bar{b}$ has Whitehead graph $\Wh_\mathcal{B}(*)\{\lines\}=K_4$.

Compare this to the line pattern $\lines'$ in $F'=F_3$ with basis
$\mathcal{B}'=\{x,y,z\}$ generated by $y$, $zx$, $z\bar{x}\bar{y}$ and
$xy\bar{z}$.
The Whitehead graph $\Wh_{\mathcal{B}'}(*)\{\lines'\}$ looks like two copies of $K_4$
spliced together, see \fullref{fig:twinK4}. 

\begin{figure}[h]
\labellist \small
\pinlabel $x$ [b] at 137 160
\pinlabel $z$ [b] at 50 160
\pinlabel $\bar{x}$ [t] at 50 2
\pinlabel $\bar{z}$ [t] at 137 2
\pinlabel $y$ [r] at 3 79
\pinlabel $\bar{y}$ [l] at 183 79 
\endlabellist
  \centering
  \includegraphics[height=\figstandardheight]{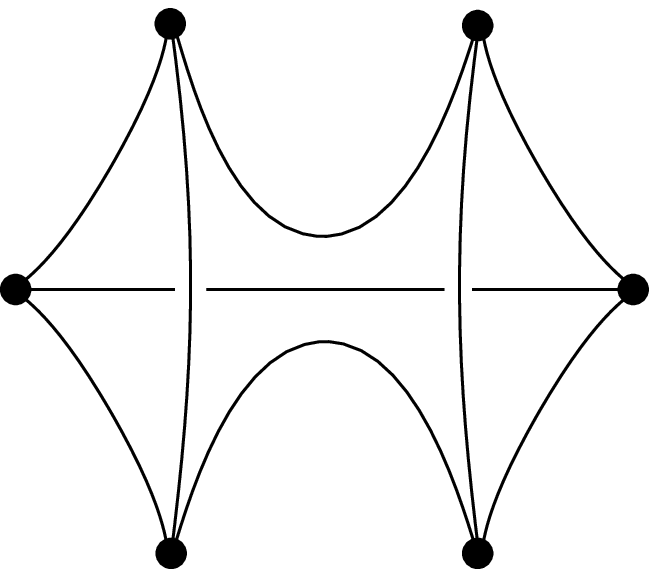}
  \caption{$\Wh_{\{x,y,z\}}(*)\{y, zx, z\bar{x}\bar{y}, xy\bar{z}\}$}
  \label{fig:twinK4}
\end{figure}

It is not hard to show that the smallest cut sets
are the obvious ones of size three. These yield a cubing that is a
4-valent tree, essentially blowing up each vertex of $F_3$ into a pair
of vertices. 

This pattern is quasi-isometric to the $K_4$ pattern in $F_2$.

\subsection{A Rigid Example for which the Free Group is not Finite Index in the
  Group of Pattern Preserving Quasi-isometries}

Consider the line pattern in $F=\left<a,b\right>$ generated by the
words $a$, $b$, and $aba\bar{b}\bar{a}b\bar{a}\bar{b}$.
 Let $\tree$ be
the Cayley graph of $F$ with respect to $\{a,b\}$.

It is easy to check that Whitehead graph in \fullref{fig:notfg} is
reduced and the decomposition space has no cut pairs, so the pattern is
rigid.

\begin{figure}[h]
  \centering
  \includegraphics[height=\figstandardheight]{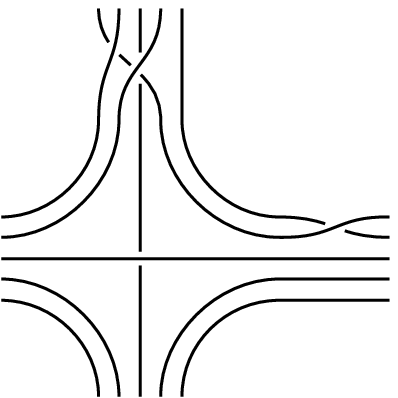}
  \caption{$\Wh(*)\{a,b,aba\bar{b}\bar{a}b\bar{a}\bar{b}\}$ (loose)}
  \label{fig:notfg}
\end{figure}

The edge cut sets have size five. Deleting any vertex of the Whitehead
graph leaves a graph that requires at least three more edges to be
deleted to disconnect the graph. Thus, any other cut sets have size at
least six. As the edge cut sets are the only cut sets of size less
than or equal to five, the rigid cube complex is just the tree $\tree$.

We will show that $F$ is not a finite index subgroup of
$\QIgp(F,\lines)$. Not only are the vertex stabilizers in
$\QIgp(F,\lines)$ not finite, they are not even finitely generated.

Define an isometry $\phi$ of $\tree$ piecewise as follows.
First, note that the automorphism $\alpha$ of $F$ that exchanges $a$ with
$\bar{a}$  preserves the pattern. It inverts $a$, fixes $b$, and takes
$aba\bar{b}\bar{a}b\bar{a}\bar{b}$ to a cyclic permutation of itself.
To the branch of the tree consisting
of words beginning with $b$, apply the automorphism $\alpha$.
To each branch of the tree beginning with $a^nb$
for some $n$, apply  the automorphsim $a^n\circ \alpha \circ
\bar{a}^n$.
Fix the rest of the tree.

The isometry $\phi$ is built piecewise from pattern
preserving automorphisms of $F$.  
It fixes the ``bottom
half'' of $\tree$,
fixes the $b$--line through $a^n$ for each $n$, and reflects each
branch beginning with $a^nb$ through the $b$--line through $a^n$.

There are lines of the pattern that pass through multiple pieces of
the domain of $\phi$, so we check that the $\phi$ is defined
consistently for these lines.
As illustrated in \fullref{fig:notfgmultiple}, the only lines shared
by the bottom half of the tree and the vertical branches are the fixed
$b$--lines (green lines in the figure are fixed).
The reflections in
adjacent vertical branches agree on the two lines they share (the two
thickened blue lines are exchanged).
Therefore, $\phi$ pieces together to give a pattern preserving
isometry. 

\begin{figure}[h]
\labellist
\small
\pinlabel $*$ [tl] at 152 40
\pinlabel $\bar{a}$ [tl] at 40 40
\pinlabel $b$ [tl] at 152 152
\pinlabel $\bar{a}b$ [tl] at 40 152
\endlabellist
  \centering
  \includegraphics[height=2\figstandardheight]{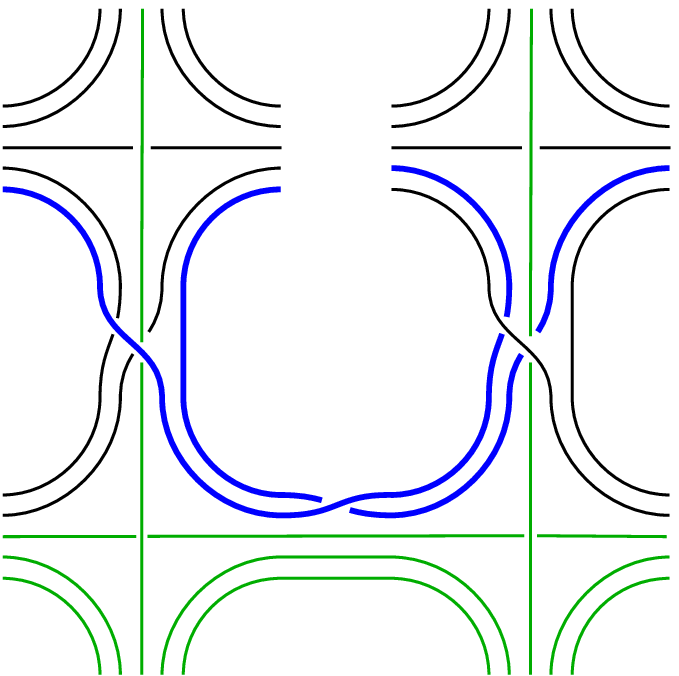}
  \caption{$\Wh([\bar{a}b,b])\{a,b,aba\bar{b}\bar{a}b\bar{a}\bar{b}\}$ (loose)}
  \label{fig:notfgmultiple}
\end{figure}

Thus, for any $n$, $b^n\circ \phi \circ \bar{b}^n$ is a pattern preserving
isometry that fixes every line in the
$n$-neighborhood of the identity vertex, but is not the identity map. 
It follows that the stabilizer of the identity vertex is not finitely generated.

\subsection{A Cube Complex That is Not a Tree}
Finally, we give an example of a rigid line pattern for which our
argument does not produce a cube complex that is a tree.

Consider the line pattern in $F_3=\left<a,b,c\right>$ generated by the
four words $\bar{a}bc$, $\bar{a}cb$, $\bar{a}b^3$ and
$\bar{a}c^3$. 
The Whitehead graph (with loose ends), is shown in
\fullref{fig:bigwh}.

The reader may verify that this is a minimal Whitehead graph and there
are no cut points or cut pairs in the decomposition space. 
In fact, the smallest cut sets are the edge cut sets of size four
corresponding to the $a$--edges. These are the only cut sets of size
four.

The other edge cut sets have size five, so we construct a cube
complex using indecomposable cut sets of size four and five.
\fullref{fig:bigwhcc} depicts the Whitehead graph (along with portions
of the Whitehead graph over two neighboring vertices) with the
1--skeleton of the cube
complex overlaid.

\begin{figure}[h]
\labellist
\small
\pinlabel $a$ [b] at 40 164
\pinlabel $b$ [r] at 0 106
\pinlabel $c$ [l] at 81 106
\pinlabel $\bar{a}$ [t] at 40 0
\pinlabel $\bar{b}$ [l] at 81 41
\pinlabel $\bar{c}$ [r] at 0 41
\endlabellist
 \centering
 \includegraphics{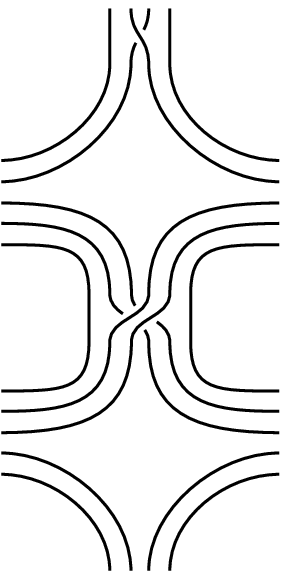}
 \caption{$\Wh(*)\{\bar{a}bc, \bar{a}cb, \bar{a}b^3, \bar{a}c^3\}$}
 \label{fig:bigwh}
\end{figure}

\begin{figure}[h]
 \centering
 \includegraphics{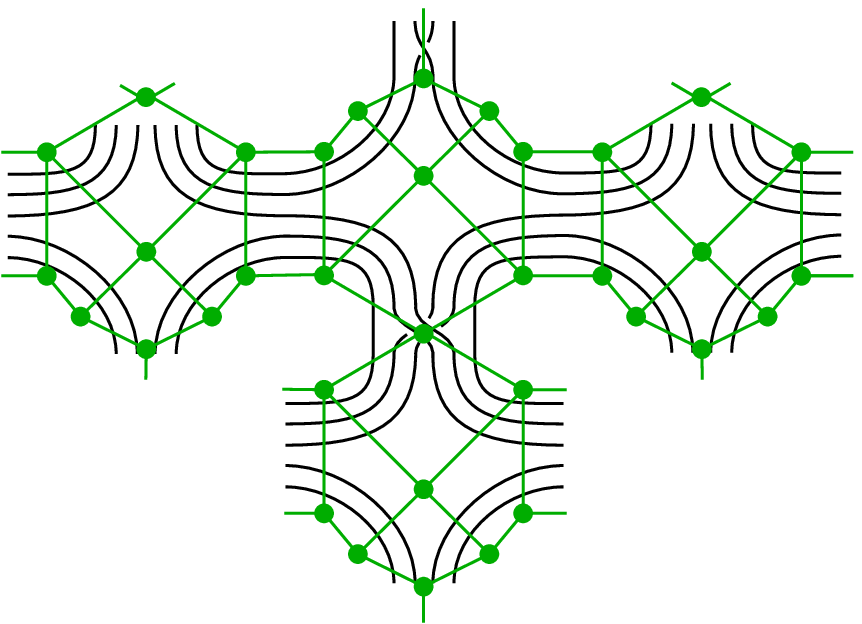}
 \caption{Whitehead graph with cube complex}
 \label{fig:bigwhcc}
\end{figure}

Note that every cut set of size five is crossed by another cut set of
size five. However, the edge cut sets are still topologically
distinguished! They are the cut sets of size five that are
crossed minimally (once) by another cut set of size five. The other
cut sets of size five are crossed by either two or five other cut sets
of size five. 

Had we said, ``build the cube complex associated to the cut sets of size four and
those of size five that are crossed by exactly one other cut set of size
five'' we would have recovered the tree as the cube complex. 

In every example we know, it is possible, after computing the cube
complex, to pick out a topologically distinguished collection of cut
sets whose associated cube complex is a tree. 
We do not know whether this is true in general.


\bibliographystyle{hyperamsplain}
\bibliography{masterbib}
\end{document}